\theoremstyle{plain}
\newtheorem{thm}{Theorem}[section]
\newtheorem{prop}[thm]{Proposition}
\newtheorem{lem}[thm]{Lemma}
\newtheorem{cor}[thm]{Corollary}
\newtheorem{clm}[thm]{Claim}
\theoremstyle{definition}
\newtheorem{defn}[thm]{Definition}
\theoremstyle{remark}
\newtheorem{rem}[thm]{Remark}
\DeclareMathOperator{\Pic}{Pic}
\DeclareMathOperator{\Nef}{Nef}
\DeclareMathOperator{\Eff}{Eff}
\DeclareMathOperator{\Mov}{Mov}
\DeclareMathOperator{\Spec}{Spec}
\DeclareMathOperator{\rank}{rank}
\DeclareMathOperator{\id}{id}
\DeclareMathOperator{\Quot}{Quot}
\DeclareMathOperator{\Hom}{Hom}
\DeclareMathOperator{\Mor}{Mor}
\DeclareMathOperator{\Sym}{Sym}
\DeclareMathOperator{\Gr}{Gr}
\DeclareMathOperator{\GL}{GL}
\DeclareMathOperator{\coker}{coker}
\DeclareMathOperator{\pr}{pr}
\def\N{\mathbb{N}}
\def\Z{\mathbb{Z}}
\def\Q{\mathbb{Q}}
\def\R{\mathbb{R}}
\def\C{\mathbb{C}}
\def\A{\mathbb{A}}
\def\r+{\mathbb{R}_{\geq 0}}
\def\r+{{\R}_{\geq 0}}
\def\q+{{\Q}_{\geq 0}}
\def\P{\mathbb{P}}
\def\arw{\rightarrow}
\def\*c{\C^{\times}}
\def\A{\mathbb {A}}
\def\C{\mathbb {C}}
\def\G{\mathbb {G}}
\def\N{\mathbb {N}}
\def\P{\mathbb {P}}
\def\Q{\mathbb {Q}}
\def\R{\mathbb {R}}
\def\V{\mathbb {V}}
\def\Z{\mathbb {Z}}
\newcommand{\cala}{\mathcal {A}}
\newcommand{\calb}{\mathcal {B}}
\newcommand{\cald}{\mathcal {D}}
\newcommand{\cale}{\mathcal {E}}
\newcommand{\calf}{\mathcal {F}}
\newcommand{\calh}{\mathcal {H}}
\newcommand{\calk}{\mathcal {K}}
\newcommand{\calo}{\mathcal {O}}
\newcommand{\calq}{\mathcal {Q}}
\begin{document}

\title[On the space of parametrized rational curves in Grassmannians]
{On birational geometry of the space of parametrized rational curves in Grassmannians}
\author[A.~Ito]{Atsushi~Ito}
\address{Department of Mathematics, Faculty of Science, Kyoto University,
Kyoto 606-8502, Japan}
\email{aito@math.kyoto-u.ac.jp}

\begin{abstract}
In this paper,
we study the birational geometry of the Quot schemes of trivial bundles on $\P^1$
by constructing small $\Q$-factorial modifications of the Quot schemes as suitable moduli spaces.
We determine all the models which appear in the minimal model program on the Quot schemes.
As a corollary,
we show that the Quot schemes are Mori dream spaces and log Fano.
\end{abstract}

\subjclass[2010]{14C20, 14M99}
\keywords{Quot scheme, small $\Q$-factorial modification, Mori dream space}

\maketitle

\section{Introduction}\label{section_intro}

Let $V$ be an $n$-dimensional vector space over an algebraically closed field $k$.
For fixed integers $ d \geq 0$ and $0 \leq  r \leq n-1$,
the moduli space $R^{\circ} := \Mor_d (\P^1, \G)$
parametrizes all morphisms $\P^1 \arw  \G$ of degree $d$,
where $\G$ is the Grassmannian of $r$-dimensional quotient spaces of $V$.
Such morphism $\P^1 \arw  \G$
corresponds to a locally free quotient sheaf of $V_{\P^1}:= V \otimes \calo_{\P^1} $ of rank $r$ and degree $d$.
Thus we can compactify $R^{\circ}$ by the Quot scheme
$R$ which parametrizes
all rank $r$, degree $d$ quotient sheaves of $V_{\P^1} $.

In \cite{St},
Str\o mme proved many properties of $R$.
For example,
he showed that $R$ is an irreducible rational smooth projective variety of dimension $nd + r(n-r)$.
He also determined $\Pic(R)$ and the nef cone $\Nef(R) \subset N^1(R)_{\R}:= (\Pic (R) / \equiv ) \otimes \R$ as follows.

If $r=n-1$ or $d=0$,
$R$ is $ \P \big( V^{\vee} \otimes H^0(\calo_{\P^1}(d))^{\vee} \big)$ or the Grassmannian $\G$ respectively.
Hence $\Pic (R) \cong \Z$ and $\Nef(R)$ is spanned by the ample generator.

If $0 \leq r \leq n-2$ and $d \geq 1$,
Str\o mme showed that there exist base point free line bundles $\alpha, \beta$ on $R$ such that
$\Pic (R) = \Z \alpha \oplus \Z \beta$ and $\Nef(R)= \r+ \alpha + \r+ \beta$.

On the other hand,
Jow \cite{Jo} determined the effective cone $\Eff(R)$ of $R$
by constructing two effective divisors which span $\Eff(R)$.
Venkatram \cite{Ve} determined the movable cone $\Mov(R)$ of $R$, and the stable base locus decomposition of $\Eff(R)$.

\vspace{2mm}
Birational geometry of moduli spaces is studied in many papers, \cite{ABCH}, \cite{Ch}, \cite{Ha}, etc.
The purpose of this paper is to investigate the birational geometry of $R$
by constructing small $\Q$-factorial modifications of $R$ as suitable moduli spaces.
Recall the definition of small $\Q$-factorial modifications.

\begin{defn}[{\cite[1.8 Definition]{HK}}]\label{def_SQM}
By a \textit{small $\Q$-factorial modification (SQM)} of a projective variety $X$,
we mean a birational map $f : X \dashrightarrow X'$
with $X'$ projective, normal, and $\Q$-factorial,
such that $f$ is an isomorphism in codimension one.
We note that such $f$ induces an isomorphism $f^* : N^1(X')_{\R} \arw N^1(X)_{\R}$ by pullbacks of divisors
if $X$ is $\Q$-factorial.
\end{defn}

\vspace{2mm}
We can find SQMs of $R$ as moduli spaces parametrizing following objects.

For a scheme $T$,
we denote the second projection $ \P^1 \times T \arw T$ by $\pi_T$, or simply $\pi$.
For a coherent sheaf $\calf$ on $\P^1 \times T$ and $m \in \Z$,
we denote $\calf \otimes {p_1}^* \calo_{\P^1}(m)$ by $\calf (m)$,
where $p_1 : \P^1 \times T \arw \P^1$ is the first projection.

\begin{defn}\label{def_cond_star}
Fix integers $0 \leq r \leq n-1 $, $d \geq 0$, and $m \geq \lceil d/s \rceil$ for $s:=n-r$.
For a locally noetherian $k$-scheme $T$,
a morphism $\iota : \cale \arw V_{\P^1 \times T}:=V \otimes _k \calo_{\P^1 \times T}$ of sheaves on $\P^1 \times T$ satisfies condition $(\star_m)$
if it satisfies the following three conditions:
\begin{itemize}
\item[i)] $\cale$ is locally free of rank $s$ and the degree of $\cale |_{\P^1 \times \{t\}}$ is $-d$ for any $t \in T$,
\item[ii)] $R^1 \pi_* (\cale(m-1)) =0$,
or equivalently,
$H^1(\cale (m-1) |_{\P^1 \times \{t\}}) =0$ for any $t \in T$,
\item[iii)] the induced map $\pi_* (\cale(m)) \otimes k(t) \arw \pi_* (V_{\P^1 \times T}(m)) \otimes k(t) =V \otimes H^0(\calo_{\P^1}(m))$ is injective for any $t \in T$.
\end{itemize}

For $\iota : \cale \arw V_{\P^1 \times T}$ and $\iota' : \cale' \arw V_{\P^1 \times T}$ which satisfy $(\star_m)$,
we say that $\iota: \cale \arw V_{\P^1 \times T} $ and $\iota' : \cale' \arw V_{\P^1 \times T}$ are equivalent
if and only if there exists an isomorphism $\cale \arw \cale'$ such that
\[
\xymatrix{
\cale \ar[r]^(0.43){\iota} \ar[d] & V_{\P^1 \times T} \ar@{=}[d] \\
\cale' \ar[r]^(0.43){\iota'} & V_{\P^1 \times T}\\
 }
\]
is commutative.
We denote by $[ \cale \arw V_{\P^1 \times T}]$ the equivalence class of $\cale \arw V_{\P^1 \times T}$.
\end{defn}

\begin{thm}\label{thm_moduli}
Let $n,r,d$, and $s$ be as in Definition \ref{def_cond_star}.
For each $m \geq \lceil d/s \rceil$,
there exists a smooth projective variety $R_m$
which is the fine moduli space parametrizing equivalence classes $ [\cale \arw V_{\P^1 \times T}]$
which satisfy $(\star_m)$ for each locally noetherian $k$-scheme $T$.
Furthermore,
there exists a natural birational map $\tilde{g}_m : R \dashrightarrow R_m$.
\end{thm}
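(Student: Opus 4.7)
The plan is to construct $R_m$ as a closed subvariety of the Grassmannian $\Gr := \Gr(N, V \otimes H^0(\calo_{\P^1}(m)))$, where $N := s(m+1)-d$ is the fiberwise Euler characteristic of $\cale(m)$, and to exhibit it as the representing scheme of the moduli functor $F_m$ defined by $(\star_m)$. The natural transformation from $F_m$ to $h_\Gr$ sends a family $[\cale \to V_{\P^1 \times T}]$ to the classifying morphism of $\pi_*\cale(m) \hookrightarrow V \otimes H^0(\calo_{\P^1}(m)) \otimes \calo_T$: condition (ii) combined with cohomology-and-base-change ensures $\pi_*\cale(m)$ is locally free of rank $N$ and compatible with arbitrary base change, and condition (iii) promotes the induced map into $V \otimes H^0(\calo_{\P^1}(m)) \otimes \calo_T$ to a subbundle inclusion.

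In the opposite direction, given a morphism $T \to \Gr$ pulling back the tautological subbundle to $\calw_T$, I would form the adjoint map $\Phi_T : \calw_T \boxtimes \calo_{\P^1}(-m) \to V_{\P^1 \times T}$ on $\P^1 \times T$ and recover $\cale_T := \im(\Phi_T)$. I would define $R_m \subset \Gr$ as the locus where $\Phi$ has fiberwise rank at most $s$ at every point of $\P^1 \times \Gr$. This is closed: the set $D' := \{(p,t) \in \P^1 \times \Gr : \rank \Phi(p,t) > s\}$ is open, cut out by non-vanishing of $(s{+}1) \times (s{+}1)$-minors, and because $\P^1 \times \Gr \to \Gr$ is flat (hence open), its image in $\Gr$ is open, with complement $R_m$. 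The key rigidity observation is that $\pi_*(\ker \Phi(m)) = 0$ holds on all of $\Gr$, since $\calw \hookrightarrow V \otimes H^0(\calo_{\P^1}(m)) \otimes \calo_\Gr$ is injective; fiberwise this forces every summand of $\ker \Phi_t$ on $\P^1$ to have degree at most $-m-1$. Combined with the degree and rank bookkeeping on $R_m$, it pins down $\ker \Phi_t \cong \calo_{\P^1}(-m-1)^{N-s}$ for every $t \in R_m$, so that the short exact sequence $0 \to \calo_{\P^1}(-m-1)^{N-s} \to \calw_T \boxtimes \calo_{\P^1}(-m) \to \cale_T \to 0$ verifies $(\star_m)$ for $\cale_T$ and shows the two constructions are mutually inverse. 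Smoothness then follows by identifying $T_{[\cale]} R_m$ with $\Hom_{\P^1}(\cale, V/\cale)$ and using the $\mathrm{Ext}^1$-vanishing on $\P^1$ to get the expected dimension $nd + r(n-r)$.

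For the birational map, let $U \subset R$ be the open subset on which the universal kernel $\cale_R \hookrightarrow V_{\P^1 \times R}$ satisfies condition (ii); this is nonempty because, for $m \geq \lceil d/s \rceil$, the balanced locally free sheaves of rank $s$ and degree $-d$ (whose summands have degree $\geq -\lceil d/s \rceil \geq -m$) lie in $U$, and (iii) is then automatic from (i), (ii), and injectivity. The universal property of $R_m$ produces a morphism $U \to R_m$, while the inverse is defined on the open $U' \subset R_m$ where $\im(\Phi)$ is saturated in $V_{\P^1}$ (so the quotient is locally free and gives a point of $R$); these agree on a dense open subset, yielding $\tilde g_m : R \dashrightarrow R_m$. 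The main obstacle I anticipate is the representability step, specifically proving that the closed conditions cutting out $R_m$ in $\Gr$ really match the moduli conditions $(\star_m)$: this rests on the kernel rigidity above and requires scheme-theoretic care to ensure every $W \in R_m$ corresponds to a genuine $(\star_m)$-object rather than to a lower-rank degeneration lying outside the moduli problem.
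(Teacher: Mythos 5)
Your construction inside the single Grassmannian $G_m = \Gr\bigl((m+1)s-d, V_m\bigr)$ has a genuine gap, and it is precisely the issue you flag at the end but underestimate: for $\lceil d/s\rceil \leq m \leq d-1$ the moduli functor is \emph{not} representable by any closed subscheme of $G_m$. The point is that condition $(\star_m)$ does not force $\iota : \cale \arw V_{\P^1\times T}$ to be injective as a map of sheaves; condition iii) only requires $H^0(\iota(m))$ to be fiberwise injective, and a subsheaf $\ker\iota \cong \bigoplus_j \calo_{\P^1}(-c_j)$ with all $c_j \geq m+1$ is compatible with that. Such objects exist and are needed for properness: over a point $W$ of the deeper degeneracy stratum $X_m^i \subset G_m$ ($i\geq 1$, nonempty exactly when $\lceil d/s\rceil \leq m \leq d-1$), the sheaf $\im \Phi_W$ has rank $s-i$ or degree $> -d$, and there is a \emph{positive-dimensional} family of inequivalent $(\star_m)$-objects $[\cale \arw V_{\P^1}]$ all inducing the same subspace $W = H^0(\cale(m))$ (this is the $\Gr(ms-d+i,\,ms-d)$-fiber of $\pr_2$ in Proposition \ref{stratification_pr_2}). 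Hence the natural transformation $F_m \arw h_{G_m}$ is not a monomorphism of functors, your recipe $\cale_T := \im(\Phi_T)$ returns an object of the wrong rank and degree over these strata, and the kernel rigidity $\ker\Phi_t \cong \calo_{\P^1}(-m-1)^{N-s}$ fails there. Moreover the locus of $W$ genuinely arising from $(\star_m)$-objects with $\iota$ injective is only the locally closed set $X_m^0\setminus X_m^1$, and its closure $X_m^0$ is singular (it receives a small contraction with positive-dimensional fibers from the smooth $R_m$, which is impossible onto a smooth target), so the smoothness claim collapses as well. Your argument does essentially work when $m \geq d$, where it recovers Str\o mme's Theorem \ref{thm_embedding_by_St}, but that is exactly the range where nothing new happens.

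The fix is the paper's (i.e.\ Str\o mme's) device: record the pair $\bigl(H^0(\cale(m-1)), H^0(\cale(m))\bigr)$ and realize $R_m$ as the closed subscheme of $G_{m-1}\times G_m$ cut out by the vanishing of $p_m\circ j_m\circ i_{m-1}$; the extra factor $G_{m-1}$ separates the Grassmannian's worth of objects lying over each bad point of $G_m$, and the universal $\calk \arw V_{\P^1\times R_m}$ reconstructed from the two tautological subbundles (diagram (\ref{diagram_K_Q})) is what verifies $(\star_m)$ on all of $R_m$, including non-injective $\iota$'s. Smoothness is then proved not by a tangent-space count but by exhibiting $R_m$ as the quotient of an open subset of an affine space by a free action of principal $\GL$-bundles, as in Proposition \ref{prop_R_m_is_SQM}. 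Your construction of the birational map $\tilde g_m$ on the locus of balanced subsheaves is in the right spirit and close to the paper's Remark \ref{rem_g_m}, but as written it rests on the flawed representability step.
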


\begin{rem}\label{rem_same}
For each $ m \geq d$,
Str\o mme \cite{St} defined a subscheme $R_m$ (he denoted it by $Z$ simply) in the direct product of two Grassmannians,
and constructed an isomorphism $\tilde{g}_m : R \arw R_m$ 
(in particular, all $R_m$ are isomorphic for $m \geq d$).
We use the same construction to define $R_m$ and
$\tilde{g}_m : R \dashrightarrow R_m$ in Theorem \ref{thm_moduli} for $m \geq \lceil d/s \rceil$.
In this sense,
the most essential part of this paper is already written in \cite{St}.
\end{rem}

As stated in Remark \ref{rem_same},
$R_m$ is constructed as a subscheme of the direct product of two Grassmannians.
Hence we have two projections from $R_m$ to the Grassmannians.
By investigating the projections,
we can determine the nef cone of $R_m$ as in the following theorems.
In particular,
this gives an alternative proof of the descriptions of $\Eff(R)$, $\Mov(R)$, and the stable base locus decomposition of $\Eff(R)$
in \cite{Jo}, \cite{Ve}.
In the following theorems,
$\alpha,\beta$ are the base point free line bundles on $R$ defined by Str\o mme 
(see Section \ref{sec_mov} for the definitions of $\alpha,\beta$).

\vspace{2mm}
First,
we describe the cases $r=0$ or $1$. 

\begin{thm}\label{main thm1} 
Assume $r \leq n-2$ and $d \geq 1$,
and set $s= n-r$.
If $r=0$ or $1$,
the following hold.
\begin{enumerate}
\item For each $\lfloor d/s \rfloor +1 \leq m \leq d-1$,
$\tilde{g}_m : R \dashrightarrow R_{m}$ is an SQM of $R$ and
\[
\Nef(R_{m}) =  \r+ ( - (d-m -1)\alpha + \beta) +\r+ ( - (d-m )\alpha + \beta)
\]
holds,
where we regard $\Nef(R_m)$ as a subset of $ N^1(R)_{\R}$ by $\tilde{g}_m^* : N^1(R_m)_{\R} \stackrel{\sim}{\arw} N^1(R)_{\R}$.
Both $- (d-m -1)\alpha + \beta$ and $- (d-m )\alpha + \beta$ are base point free on $R_m$.
\item It holds that
\begin{align*}
\Mov(R) &= \Nef(R) \cup \bigcup_{m=\lfloor d/s \rfloor +1}^{d-1} \Nef(R_m)  \\
&=  \r+ \alpha + \r+ ( - (d- \lfloor d/s \rfloor  -1) \alpha + \beta) , \\
\Eff(R) 
&= \r+ (2d \alpha- r \beta) + \r+ (- (d- \lceil d/s \rceil ) \alpha +\beta ) 
\end{align*}
as in Figure \ref{figure1}.
\item The base point free line bundle $\alpha$ on $R$ defines a surjective morphism
\begin{align*}
f : R &\twoheadrightarrow \P \Big(\bigwedge^r V \otimes H^0(\calo_{\P^1}(d) )^{\vee} \Big),
\end{align*}
which is a fiber type contraction
(resp.\ a divisorial contraction) for $r=0$ (resp.\ $r=1$).
\item The base point free class $- (d- \lfloor d/s \rfloor  -1)\alpha + \beta$ on $R_{\lfloor d/s \rfloor +1}$ defines a surjective morphism to a Grassmannian
\begin{align*}
 R_{\lfloor d/s \rfloor +1}  &\twoheadrightarrow
 \Gr \Bigl( (\lfloor d/s \rfloor +1)s - d , V \otimes H^0 \bigl(\calo_{\P^1}(\lfloor d/s \rfloor )\bigr) \Bigr),
\end{align*}
which is a fiber type contraction (resp.\ a divisorial contraction) if $d/s \not \in \N $ (resp.\ $d/s \in \N $).
\item For each $\lfloor d/s \rfloor +1 \leq m \leq d-1$, the class $  - (d-m -1)\alpha + \beta $, which is base point free on $R_m$ and $R_{m+1}$
(for $m=d-1$, we identify $R_{m+1}=R_d$ with $R$ by the isomorphism $\tilde{g}_d$),
defines birational morphisms $\pr_1, \pr_2$ as in the diagram
\[
\xymatrix{ 
R_{m} \ar@{-->}[rr] \ar@{->>}[rd]_{\pr_2} & &  R_{m+1} \ar@{->>}[ld]^{\pr_1} \\
  & X_m^0 \makebox[0pt]{\hspace{1mm} ,} & \\
 }
\]
where $X_m^0$ is a degeneracy locus in a Grassmannian (see Definition \ref{def_strt_G_m-1} for the definition of $X_m^0$).
Both $\pr_1 $ and $ \pr_2$ in the diagram are small contractions.
\end{enumerate}
\end{thm}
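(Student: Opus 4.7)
The plan is to exploit Str\o mme's construction (Remark \ref{rem_same}), which realizes each $R_m$ as a closed subscheme of a product of two Grassmannians
\[
R_m \hookrightarrow G_m \times G_{m-1}, \qquad G_k := \Gr\bigl(s(k+1)-d,\, V \otimes H^0(\calo_{\P^1}(k))\bigr),
\]
with projections $q_m^{(1)}, q_m^{(2)}$ induced by $[\cale \arw V_{\P^1 \times T}] \mapsto (\pi_*\cale(m),\, \pi_*\cale(m-1))$. The hypothesis $m \geq \lfloor d/s\rfloor + 1$ is precisely what makes $G_{m-1}$ have positive dimension. A pivotal observation for (5) is that $G_m$ is both the first factor of $R_m$ and the second factor of $R_{m+1}$, so $R_m$ and $R_{m+1}$ share a common morphism to $G_m$, whose image will be the base $X_m^0$.

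For part (1), I would first compute the pullbacks $(\tilde{g}_m)^* q_m^{(i)*} \calo(1)$ in $N^1(R)_\R$ via the identification $q_m^{(i)*}\calo(1) = \det(\pi_*\cale(m+1-i))^{-1}$, the exact sequence $0 \to \pi_*\cale(m-1) \to \pi_*\cale(m) \to \cale|_{\{p\} \times R_m} \to 0$ (where $p \in \P^1$ is a point), and the definitions of $\alpha,\beta$ to be recalled in Section \ref{sec_mov}; these work out to $-(d-m-1)\alpha+\beta$ and $-(d-m)\alpha+\beta$. Next, show that $\tilde{g}_m$ is an SQM by bounding the codimension of both indeterminacy loci: the locus in $R$ where $(\star_m)$ fails (through ii) or iii)) and its analogue in $R_m$ each have codimension at least two, which reduces to stratifying the universal $\cale$ by Grothendieck splitting type on $\P^1$ and a direct dimension count on each jumping stratum. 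Given the SQM property, $N^1(R_m)_\R$ has rank two; because both $q_m^{(i)}$ have positive-dimensional fibers in the SQM range (by a dimension comparison with $G_m, G_{m-1}$), their pullbacks lie on the boundary of $\Nef(R_m)$ and their linear independence forces them to span it, with base-point-freeness inherited from the Pl\"ucker polarizations.

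Part (2) follows from the Hu--Keel description of $\Mov(R)$ as the closure of $\bigcup_f f^*\Nef(X')$ over all SQMs, combined with (1): consecutive chambers $\Nef(R_m)$ and $\Nef(R_{m+1})$ share the edge generated by $-(d-m-1)\alpha+\beta$, so the union telescopes to the asserted cone; the extreme rays of $\Eff(R)$ are then recovered by exhibiting effective divisors as the divisorial parts of the failure loci of condition iii) in $(\star_m)$ at the endpoints $m = \lceil d/s\rceil$ and $m \gg 0$, matching \cite{Jo}. Part (3) follows by identifying $f$ with the contraction of the extreme ray $\alpha$: for $r=0$, $\dim R = nd > d$ forces fiber type, while for $r=1$, $\dim R = nd+n-1 = n(d+1)-1$ forces divisorial. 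Part (4) identifies the asserted morphism with $q_{m_0}^{(2)}: R_{m_0} \to G_{m_0-1}$ for $m_0 = \lfloor d/s\rfloor + 1$; a direct dimension count shows $\dim G_{m_0-1} = \dim R$ exactly when $s\mid d$, giving the dichotomy. Part (5) is immediate from the formulas in (1): the class $-(d-m-1)\alpha+\beta$ equals $q_m^{(1)*}\calo(1)$ on $R_m$ and $q_{m+1}^{(2)*}\calo(1)$ on $R_{m+1}$, both morphisms land in the common Grassmannian $G_m$ with image $X_m^0$ (Definition \ref{def_strt_G_m-1}), and smallness of the two contractions is again controlled by the codimension-two bound on jumping loci.

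The hardest step, and the technical engine of the whole theorem, will be the codimension-$\geq 2$ bounds on the loci where $(\star_m)$ fails (and on the exceptional loci of the contractions in (5)). These require a careful analysis of the jumping phenomena of the universal $\cale$ along the splitting-type stratification; this single input drives the SQM property, the extremality of the Pl\"ucker pullbacks, and the smallness of the contractions.
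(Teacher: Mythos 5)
Your proposal follows essentially the same route as the paper: realize $R_m$ inside $G_{m-1}\times G_m$, identify the pullbacks of the Pl\"ucker polarizations with $-(d-m)\alpha+\beta$ and $-(d-m-1)\alpha+\beta$, establish the SQM property and the smallness of the contractions to $X_m^0$ by dimension counts on the degeneracy-locus (splitting-type) stratification, and assemble $\Mov(R)$ from the resulting chain of chambers with the endpoint behaviour of $\pr_1$ at $m=\lfloor d/s\rfloor+1$ and of $f$ giving the fiber-type/divisorial dichotomy. The only notable divergence is a sub-step: to pin down the edges of $\Eff(R)$ the paper computes the classes of the contracted divisors by comparing $K_{R}$ (resp.\ $K_{R_{(d/s)+1}}$) with the pullback of the canonical divisor of the target, whereas you propose exhibiting the extremal effective divisors directly as failure loci \`a la Jow — either works, but you would still need to compute those divisor classes in terms of $\alpha,\beta$, which is exactly what the discrepancy computation supplies.
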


\begin{figure}[htbp]
 \begin{center}

\[
\begin{xy}
(0,0)="O",
(-45,15)="A",
(-37,32)="B",
(-25,36)="C",
(10,42)="D",
(28,36)="E",
(39,25)="F",
(45,15)="G",
(-30,15)*{{\scriptstyle \Nef(R_{\lfloor d/s \rfloor +1})}},
(35,16)*{ {\scriptstyle \Nef(R)}},
(27.8,24.3)*{ {\scriptstyle \Nef(R_{d-1})}},
(15.5,31)*{{\scriptstyle \Nef(R_{d-2})}},
(48,16)*{{\scriptstyle \alpha}},
(41.5,27.5)*{{\scriptstyle \beta}},
(29.5,39.5)*{{\scriptstyle -\alpha+\beta}},
(10,45.5)*{{\scriptstyle -2\alpha+\beta}},
(-47,35)*{ {\scriptstyle - (d- \lfloor d/s \rfloor  -2) \alpha + \beta}},
(-56,18)*{ {\scriptstyle - (d- \lfloor d/s \rfloor  -1) \alpha + \beta}},
(60,15.5)*{},
(-22,25);(4,32)

\ar "O";"A"
\ar "O";"B"
\ar "O";"D"
\ar "O";"E"
\ar "O";"F"
\ar "O";"G"

\crv{~*{.}(-10,33.5)}
\end{xy}
\]

 \end{center}
 \caption{Decomposition of $\Mov(R)$ for $r=0,1$}
 \label{figure1}
\end{figure}
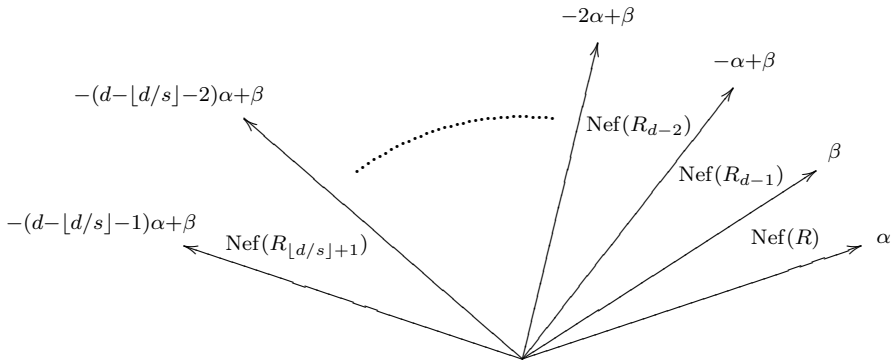

\vspace{4mm}

Next, we consider the cases $2 \leq r \leq n-2$.
Let $R'$ be the Quot scheme parametrizing all rank $n-r$, degree $d$ quotient sheaves of the dual bundle $V^{\vee}_{\P^1}$.
As we wil see in Section \ref{sec_mov},
there exists a natural birational map $R \dashrightarrow R'$.
Applying Theorem \ref{thm_moduli} to $R'$,
we obtain a moduli space $R'_{m'}$ and a birational map $\tilde{g}'_m : R' \dashrightarrow R'_{m'}$ for each $m' \geq \lceil d/r \rceil$.

\begin{thm}\label{main thm2}
Assume $2 \leq r \leq n-2$ and $d \geq 1$,
and set $s= n-r$.
Then
\begin{enumerate}
\item For $\lfloor d/s \rfloor +1 \leq m \leq d-1$ and $\lfloor d/r \rfloor +1 \leq m' \leq d-1$,
$R'$, $R_{m}$,
and $R'_{m'}$ are SQMs of $R$, and it holds that
\begin{align*}
\Nef(R_{m}) 
 &= \r+(- (d-m -1)\alpha + \beta ) + \r+ (-(d-m)\alpha + \beta )  , \\
\Nef(R') &= \r+ ( 2d \alpha -\beta) + \r+ \alpha , \\ 
\Nef(R'_{m'})  
&=  \r+ ( (d + m' + 1)\alpha - \beta) + \r+ ( ( d + m' )\alpha- \beta).
\end{align*}
The classes $- (d-m -1)\alpha + \beta , - (d-m )\alpha + \beta$ are base point free on $R_m$,
$2d \alpha-\beta, \alpha$ are base point free on $R'$,
and $ (d + m' + 1)\alpha - \beta , ( d + m' )\alpha- \beta$ are base point free on $R'_{m'}$.
\item It holds that
\begin{align*}
\Mov(X) &= \Nef(R) \cup \Nef(R') \cup \bigcup_{m=\lfloor d/s \rfloor +1}^{d-1} \Nef(R_m)
 \cup  \bigcup_{m'=\lfloor d/r \rfloor +1}^{d-1} \Nef(R'_{m'}) \\
&= \r+ (  (d + \lfloor d/r \rfloor +1)\alpha - \beta ) + \r+ ( - (d- \lfloor d/s \rfloor  -1) \alpha +\beta ) , \\
\Eff(X) 
&= \r+ ( (d + \lceil d/r \rceil )\alpha - \beta) + \r+ ( - (d- \lceil d/s \rceil  ) \alpha + \beta ) 
\end{align*}
as in Figure \ref{figure2}.
\item The same statement as (4) in Theorem \ref{main thm1} holds.
\item The base point free class $ (d + \lfloor d/r \rfloor +1)\alpha- \beta $ on $R'_{\lfloor d/r \rfloor +1}$
defines a surjective morphism to a Grassmannian
\begin{align*}
 R'_{\lfloor d/r \rfloor +1}  &\twoheadrightarrow
 \Gr \Bigl( (\lfloor d/r \rfloor +1)r - d , V \otimes H^0 \bigl(\calo_{\P^1}(\lfloor d/r \rfloor )\bigr) \Bigr),
\end{align*}
which is a fiber type contraction (resp.\ a divisorial contraction) if $d/r \not \in \N $ (resp.\ $d/r \in \N $).
\item The same statement as (5) in Theorem \ref{main thm1} holds.
\item For each $\lfloor d/r \rfloor +1 \leq m' \leq d-1$, the class $  (d + m' + 1)\alpha - \beta $, which is base point free on $R'_{m'}$ and $R'_{m'+1}$
(for $m'=d-1$, we identify $R_{m'+1}=R'_d$ with $R'$),
defines birational morphisms $\pr_1, \pr_2$ as in the diagram
\[
\xymatrix{ 
R'_{m} \ar@{-->}[rr] \ar@{->>}[rd]_{\pr'_2} & &  R'_{m'+1} \ar@{->>}[ld]^{\pr'_1} \\
  & {X'_m}^{\hspace{-1.5mm} 0} \makebox[0pt]{\hspace{1mm} ,} & \\
 }
\]
where ${X'_m}^{\hspace{-1.5mm} 0}$ is a degeneracy locus in a Grassmannian.
Both $\pr'_1 $ and $ \pr'_2$ in the diagram are small contractions.
\item The class $\alpha$, which is base point free on $R$ and $R'$,
defines birational morphisms $f,f'$ as in the diagram
\[
\xymatrix{ 
R \ar@{-->}[rr] \ar@{->>}[rd]_(.4){f} & &  R' \ar@{->>}[ld]^(.4){f'} \\
  & K^d_{s,r} \makebox[0pt]{\hspace{1mm} ,} & \\
 }
\]
where $K^d_{s,r}$, which is defined to be the image of $f$,
is called a quantum Grassmannian.
Both $f$ and $f'$ in the diagram are small contractions.
\end{enumerate}
\end{thm}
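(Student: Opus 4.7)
The overall strategy extends that of Theorem \ref{main thm1}: produce SQMs of $R$ as moduli spaces via Theorem \ref{thm_moduli}, compute each nef cone from the embedding of $R_m$ into a product of two Grassmannians (Remark \ref{rem_same}), and assemble the resulting chambers into $\Mov(R)$ and $\Eff(R)$. The novelty for $2 \leq r \leq n-2$ is the dual Quot scheme $R'$, birational to $R$, to which Theorem \ref{thm_moduli} (with the roles of $r$ and $s$ interchanged) applies to produce a second family of SQMs $R'_{m'}$ together with $\tilde g'_{m'} : R' \dashrightarrow R'_{m'}$. All of these share a common N\'eron--Severi space $N^1(R)_{\R} = \R \alpha \oplus \R \beta$ via pullback.

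By Remark \ref{rem_same}, $R_m$ embeds in $\Gr_1^{(m)} \times \Gr_2^{(m)}$ where the two Grassmannian factors parametrize $\pi_* \cale(m-1)$ and $\pi_* \cale(m)$; denote the pullbacks of the Pl\"ucker polarizations by $L_1^{(m)}, L_2^{(m)}$. These are automatically base point free. A Riemann--Roch calculation using Str\o mme's construction of $\alpha, \beta$ (to be recalled in Section \ref{sec_mov}) identifies $\tilde g_m^* L_1^{(m)} = -(d-m-1)\alpha + \beta$ and $\tilde g_m^* L_2^{(m)} = -(d-m)\alpha + \beta$, giving one containment in (1). The reverse inclusion reduces to showing each ray is extremal; I would establish this by computing a generic fiber of the corresponding Grassmannian projection and checking that it is positive-dimensional exactly in the range $\lfloor d/s \rfloor + 1 \leq m \leq d-1$. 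The analogous analysis on the $R'$ side produces $\Nef(R'_{m'})$, while the endpoint cones $\Nef(R)$ and $\Nef(R')$ are already known from Str\o mme (the latter by the $r \leftrightarrow s$ symmetry).

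The SQM property of $\tilde g_m$, $\tilde g'_{m'}$, and of $R \dashrightarrow R'$ follows once one checks that the indeterminacy locus corresponds to sheaves violating condition $(\star_m)$ and has codimension at least two; this codimension drops to one precisely when $m$ falls below $\lceil d/s \rceil$, which pins down the outer boundary of $\Eff(R)$ in (2). Gluing all chambers along their shared walls then produces the fan of Figure \ref{figure2}, proving (2). Parts (3), (5) are copied verbatim from Theorem \ref{main thm1}; parts (4), (6) are their duals on the $R'$ side (the morphism in (4) comes from the base point free class on the outermost chamber $R'_{\lfloor d/r \rfloor + 1}$ defined by $\pi_* \cale(\lfloor d/r \rfloor)$). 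Part (7), genuinely new for $r \geq 2$, uses that the ray $\r+ \alpha$ now lies in the interior of $\Mov(R)$ as the common wall $\Nef(R) \cap \Nef(R')$, so the base point free class $\alpha$ induces small contractions $f, f'$ from $R, R'$ onto a common image, identified by construction with the quantum Grassmannian $K^d_{s,r}$.

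The main obstacle is the sharp extremality step for $\Nef(R_m)$ and $\Nef(R'_{m'})$: while base point freeness is automatic from the Grassmannian-product embedding, ruling out further SQM chambers between consecutive $R_m$ and $R_{m+1}$ amounts to showing both that the two Grassmannian projections from $R_m$ have positive-dimensional fibers exactly in the asserted range and that $L_i^{(m)}$ ceases to be nef the instant one steps outside $\r+ L_1^{(m)} + \r+ L_2^{(m)}$. A secondary delicate point is matching the scheme-theoretic image of $|\alpha|$ in part (7) with the standard definition of $K^d_{s,r}$, which should follow from identifying sections of $\alpha$ on $R$ with the Pl\"ucker coordinates of the Grassmannian composed with the evaluation maps at points of $\P^1$.
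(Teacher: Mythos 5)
Your overall architecture matches the paper's: realize the SQMs as the moduli spaces $R_m$, $R'_{m'}$ together with the dual Quot scheme $R'$, compute each nef cone from the two projections of $R_m\subset G_{m-1}\times G_m$ to the Grassmannian factors, and glue the chambers. Your declared ``main obstacle'' (extremality) is in fact the easy part: once one knows each projection has positive-dimensional fibers (the paper's stratification results), the two pulled-back Pl\"ucker classes are semiample but not ample on a variety of Picard rank two, hence span $\Nef(R_m)$; and consecutive chambers share the ray $\r+ c_1(\mathscr{B}_m)$ on the nose, so no intermediate chambers can occur. The genuine gaps are elsewhere. First, every formula on the $R'$ side is asserted in the basis $\alpha,\beta$ of $N^1(R)_{\R}$, but the ``$r\leftrightarrow s$ symmetry'' only yields $\Nef(R')=\r+\alpha'+\r+\beta'$ and its relatives in Str\o mme's generators $\alpha',\beta'$ of $N^1(R')_{\R}$. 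The substantive step is the change of basis $\alpha'=\alpha$, $\beta'=2d\alpha-\beta$ under the identification induced by the SQM $R\dashrightarrow R'$; the paper proves this by a Chern-class computation on the common open locus $R^{\circ}\cong R'^{\circ}$, using $\beta=\pi_*\bigl(c_2(\cala)\bigr)$, $\beta'=\pi_*\bigl(c_2(\cala')\bigr)$ and Mart\'inez's formula for $c_2(\calb)$. Without it, the displayed descriptions of $\Nef(R')$, $\Nef(R'_{m'})$, and the edges of $\Mov(R)$ and $\Eff(R)$ are unsupported.

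Second, your mechanism for the boundary of $\Eff(R)$ --- ``the codimension of the indeterminacy locus drops to one, which pins down the outer boundary'' --- is not the correct one and cannot produce the classes $-(d-\lceil d/s\rceil)\alpha+\beta$ and $(d+\lceil d/r\rceil)\alpha-\beta$. What actually happens at the boundary of $\Mov(R)$ is a dichotomy: the extremal contraction $\pr_1: R_{\lfloor d/s\rfloor+1}\arw G_{\lfloor d/s\rfloor}$ is of fiber type when $d/s\notin\N$ (so the $\Eff$ and $\Mov$ edges coincide there) and divisorial when $d/s\in\N$, in which case the $\Eff$ edge is spanned by the class of the contracted divisor, which must be computed --- the paper does this by comparing canonical classes, $K_{R_{(d/s)+1}}-\pr_1^*K_{G_{d/s}}=\bigl((nd/s)+2r\bigr)c_1(\mathscr{B}_{(d/s)-1})$, and similarly on the $R'$ side (where for $r=1$ this is the map to $\P(V\otimes H^0(\calo_{\P^1}(d))^{\vee})$). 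Your sketch contains neither the dichotomy nor the discrepancy computation. By contrast, your route to (7) --- $\alpha$ spans an interior wall of $\Mov(R)$ and is big, hence defines small contractions --- is a legitimate alternative to the paper's argument (which shows $f|_{R^{\circ}}$ is an embedding and quotes normality of $K^d_{s,r}$ from Sottile--Sturmfels), though you still need normality of the image for $f$ to be a contraction onto $K^d_{s,r}$ as defined.
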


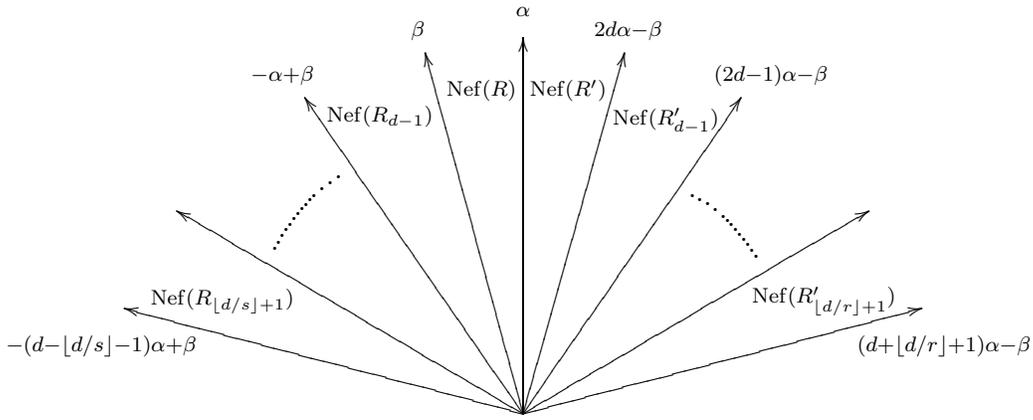
\begin{figure}[htbp]
 \begin{center}

\[
\begin{xy}
(0,0)="O",
(53,14)="A",
(46,27)="B",
(29,42)="C",
(13.5,48)="D",
(0,50)="E",
(-13,48)="F",
(-29,42)="G",
(-46,27)="H",
(-53,14)="I",
(-40,15)*{{\scriptstyle \Nef(R_{\lfloor d/s \rfloor +1})}},
(40,15)*{ {\scriptstyle \Nef(R'_{\lfloor d/r \rfloor +1})}},
(6,43)*{{\scriptstyle \Nef(R')}},
(19,39)*{{\scriptstyle \Nef(R'_{d-1})}},
(-5.5,43)*{{\scriptstyle \Nef(R)}},
(-19,39.5)*{{\scriptstyle \Nef(R_{d-1})}},
(-56,9)*{ {\scriptstyle - (d- \lfloor d/s \rfloor  -1) \alpha + \beta}},
(56,9)*{ {\scriptstyle (d + \lfloor d/r \rfloor +1)\alpha- \beta}},
(0,53.5)*{{\scriptstyle \alpha}},
(-14,51)*{{\scriptstyle  \beta}},
(14,51)*{{\scriptstyle 2d \alpha -\beta}},
(-32,45)*{{\scriptstyle -\alpha+ \beta}},
(33,45)*{{\scriptstyle (2d -1)\alpha -\beta}},
(-33,22);(-24.5,31.5)
**\crv{~*{.}(-29,29)},
(31,21);(22.5,29)
**\crv{~*{.}(26.5,27.5)}

\ar "O";"A"
\ar "O";"B"
\ar "O";"C"
\ar "O";"D"
\ar "O";"E"
\ar "O";"F"
\ar "O";"G"
\ar "O";"H"
\ar "O";"I"

\end{xy}
\]

 \end{center}
 \caption{Decomposition of $\Mov(R)$ for $2 \leq r \leq n-2$}
 \label{figure2}
\end{figure}

\vspace{4mm}
Mori dream spaces, which were introduced by Hu and Keel in \cite{HK},
are special varieties which have very nice properties in view of the minimal model program.
A normal projective variety $X$ is called a Mori dream space
if 
\begin{itemize}
\item[1)] $X$ is $\Q$-factorial and $\Pic(X) \otimes \Q =N^1(X)_{\Q}$,
\item[2)] $\Nef(X )$ is spanned by finitely many semiample line bundles,
\item[3)]  there exists a finite corrections of SQMs $ f_i : X \dashrightarrow X_i$ such that
each $X_i$ satisfies 2) and $\Mov(X) = \bigcup_i f_i^*(\Nef(X_i))$.
\end{itemize}
By Theorems \ref{main thm1}, \ref{main thm2},
we obtain the following corollary.

\begin{cor}\label{cor_MDS}
For $0 \leq r \leq n-1$ and $d \geq 0$,
the Quot scheme $R$ is a Mori dream space.
Moreover,
$R$ is log Fano,
i.e.,
there exists an effective $\Q$-divisor $D$ on $R$ such that
$(R,D)$ is Kawamata log terminal and $-(K_R+D)$ is ample.
\end{cor}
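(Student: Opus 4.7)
The plan is to verify the three Hu--Keel axioms of a Mori dream space and then to construct a klt boundary witnessing the log Fano property. I would first dispose of the edge cases: when $r = n-1$ (resp.\ $d = 0$), $R$ is the projective space $\P(V^{\vee} \otimes H^0(\calo_{\P^1}(d))^{\vee})$ (resp.\ the Grassmannian $\G$), which is both Fano and a Mori dream space. It therefore suffices to treat $0 \le r \le n-2$ and $d \ge 1$, which is exactly the range covered by Theorems~\ref{main thm1} and~\ref{main thm2}.

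For the Mori dream space property in this range, Str\o mme's results immediately give that $R$ is smooth (hence $\Q$-factorial) with $\Pic(R) = \Z \alpha \oplus \Z \beta = N^1(R)$ and $\Nef(R) = \r+ \alpha + \r+ \beta$ spanned by base point free classes, which verifies conditions (1) and (2). Condition (3) is precisely what Theorems~\ref{main thm1} and~\ref{main thm2} produce: the finite collection of SQMs $\tilde{g}_m : R \dashrightarrow R_m$ for $\lfloor d/s \rfloor + 1 \le m \le d-1$ (together with $R \dashrightarrow R'$ and $R \dashrightarrow R'_{m'}$ when $r \ge 2$) each has nef cone spanned by base point free classes on the respective target, and the union of these nef cones (pulled back to $N^1(R)_{\R}$) equals $\Mov(R)$.

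For log Fano, the edge cases give $R$ Fano so one takes zero boundary. In the main case, I would first compute $K_R = a \alpha + b \beta$ by applying Grothendieck--Riemann--Roch to the universal exact sequence $0 \to \cale \to V_{\P^1 \times R} \to \calq \to 0$ together with the identification $T_R \cong \pi_* \mathcal{H}om(\cale, \calq)$, and then verify that $-K_R$ lies in the interior of the two-dimensional cone $\Eff(R)$, whose extremal rays are spanned by the explicit effective divisors $D_+, D_-$ of Theorems~\ref{main thm1}(2) and~\ref{main thm2}(2). Writing $-K_R = c_+ D_+ + c_- D_-$ with $c_\pm > 0$, I would choose a small ample $\Q$-class $A = p\alpha + q \beta$ with $p, q > 0$ and expand $A$ as a positive combination of $D_\pm$ (possible because $\Nef(R) \subset \Eff(R)$); then $\Delta := -K_R - A$ is effective, of the form $\lambda_+ D_+ + \lambda_- D_-$ with $\lambda_\pm \ge 0$ close to $c_\pm$. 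For $p, q$ sufficiently small, the multiplicities of the representative of $\Delta$ along each prime component of $D_\pm$ stay $<1$, so smoothness of $R$ makes $(R, \Delta)$ klt, and $-(K_R + \Delta) = A$ is ample by construction.

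The main obstacle is the last paragraph: one must pin down $K_R$ in closed form to confirm $-K_R \in \inte \Eff(R)$, and then arrange effective representatives of $D_+, D_-$ whose prime-divisor decompositions have multiplicities small enough that the perturbation by $A$ produces a klt pair. Both tasks force a concrete analysis of the degeneracy-locus divisors built in the main theorems, which are presented there primarily as divisor classes rather than as explicit effective cycles.
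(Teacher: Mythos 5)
Your treatment of the Mori dream space property coincides with the paper's: conditions 1) and 2) come from Str\o mme and condition 3) from Theorems \ref{main thm1} and \ref{main thm2}, after reducing to $0 \le r \le n-2$, $d \ge 1$. The gap is in the log Fano argument. You set $\Delta = -K_R - A$ for a small ample $A$ and expand $\Delta = \lambda_+ D_+ + \lambda_- D_-$ along the generators of the extremal rays of $\Eff(R)$, hoping the multiplicities along prime components stay $<1$ for $A$ small. But $\lambda_\pm$ are close to the coefficients $c_\pm$ of $-K_R$ itself, and these are not small: for $r=0$ one finds $-K_R = (n + 2d - n\lceil d/s\rceil)\,\alpha + n\,\bigl(-(d-\lceil d/s\rceil)\alpha + \beta\bigr)$, so the coefficient along the ray of $-(d-\lceil d/s\rceil)\alpha+\beta$ is $n \ge 2$; for $r=1$, $n=4$, $d=3$ the coefficient along the ray of $2d\alpha - \beta$ is $2$. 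Precisely when these extremal classes lie outside $\Mov(R)$ (e.g.\ $d/s \in \N$, or the class $2d\alpha-\beta$ for $r=1$), they are the classes of exceptional prime divisors of divisorial contractions, hence rigid: the only effective representative of $\lambda_- D_-$ is $\lambda_-$ times a reduced prime divisor, and a boundary with a coefficient $\ge 2$ is not klt (not even lc). There is no freedom to ``arrange representatives with small multiplicities,'' which is exactly the obstacle you flag at the end but do not overcome.

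The paper avoids this by never building the boundary on $R$ itself. From $K_R = -(n+(2r+2-n)d)\alpha-(n-2r)\beta$ one checks $-K_R \in \Mov(R) \cap \Eff(R)^{\circ}$; since $\Mov(R)$ is the union of the nef cones of the SQMs $R_m$, $R'$, $R'_{m'}$, there is a smooth SQM $R^{\dagger}$ on which the anticanonical class is nef and big. There one writes $-K_{R^{\dagger}} \sim_{\Q} A + E$ with $A$ ample and $E$ effective, so that $-(K_{R^{\dagger}} + \epsilon E) = (1-\epsilon)(-K_{R^{\dagger}}) + \epsilon A$ is ample and $(R^{\dagger}, \epsilon E)$ is klt for $0 < \epsilon \ll 1$ because the boundary coefficient is $\epsilon$ rather than $c_\pm$. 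Finally, \cite[Lemma 2.4]{Bi} (log Fano is preserved under SQMs) transfers the conclusion back to $R$. The two ingredients missing from your argument are this passage to the SQM where $-K$ becomes nef, and Birkar's lemma.
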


\begin{rem}\label{rem_BCHM}
If the characteristic of the base field $k$ is zero,
any $\Q$-factorial log Fano variety is a Mori dream space by \cite[Corollary 1.3.2]{BCHM}.
\end{rem}

\vspace{2mm}
This paper is organized as follows. In Section \ref{sec_def_Rm},
we recall the definition of $R_m$ in \cite{St} 
and show that $R_m$ parametrizes $[\cale \arw V_{\P^1}]$ satisfying $(\star_m)$.
In Section \ref{sec_sm}, we prove the rest part of Theorem \ref{thm_moduli}. 
In Section \ref{sec_stratification}, we investigate morphisms from $R_m$ to Grassmannians.
In Section \ref{sec_mov}, we give the proof of Theorems \ref{main thm1}, \ref{main thm2}.
Throughout this paper,
we work over a fixed algebraically closed field $k$ of any characteristic.

\subsection*{Acknowledgments}
The author would like to express his gratitude to Professors Yoshinori Gongyo, Yasunari Nagai, and Yujiro Kawamata
for valuable comments and advice.
The author is supported by JSPS KAKENHI Grant Number 261881.

\section{Construction of $R_m$}\label{sec_def_Rm}

Let $V$ be an $n$-dimensional $k$-vector space.
We denote $V \otimes \calo_{T} $ by $V_T$ for a $k$-scheme $T$.
For fixed integers $0 \leq r \leq n-1$ and $d \geq 0$,
let $R$ be the Quot scheme parametrizing all rank $r$, degree $d$ quotient sheaves of $V_{\P^1}$ as in Introduction.
Set $s=n-r$.
We denote by $\Gr(s,V)$ the Grassmannian of $s$-dimensional subspaces of $V$.

For $m \geq 0$,
we set $V_m = V \otimes_{k} H^0 \bigl(\calo_{\P^1}(m) \bigr)$.
Let $G_m =\Gr \bigl( (m+1)s - d , V_m \bigr) $
for $m \in \Z$ such that $(m+1)s - d \geq 0$,
that is,
for $m \geq \lceil d/s \rceil -1$.

As stated in Introduction,
Str\o mme constructed a subscheme $R_m \subset G_{m-1} \times G_m$ and an isomorphism $R \arw R_m$ for each $m \geq d$ in \cite[Section 4]{St}.
By the same construction,
we can define $R_m$ for $ m \geq \lceil d/s \rceil$.
Hence, recall his definition.

\vspace{3mm}
Let
\begin{equation}\label{eq_univ_on R}
0 \arw \cala \arw V_{\P^1 \times R} \arw \calb \arw 0
\end{equation}
be the universal exact sequence on $\P^1 \times R$.
By tensoring $ \calo_{\P^1}(m)$ on (\ref{eq_univ_on R})
and pushing forward by $\pi$,
we obtain a (not necessarily exact) sequence on $R$
\begin{align}\label{eq_*_m}
\tag{$*_m$}
0 \arw \mathscr{A}_m \arw (V_m)_R \arw \mathscr{B}_m \arw 0
\end{align}
for each $m $,
where $\mathscr{A}_m = \pi_* \bigl(\cala(m) \bigr), \mathscr{B}_m=\pi_* \bigl(\calb(m)\bigr)$.
We note that $\mathscr{B}_m$ is locally free of rank $(m+1)r+d$ for $m \geq -1$.
By the universal property of Grassmannians,
we have rational maps as follows.

\begin{defn}[{\cite[Section 4]{St}}]\label{def_g_m}
For $m \geq \lceil d/s \rceil -1$,
we denote by $g_m$ the rational map
$R \dashrightarrow G_m$ induced by the sequence $(*_m)$.
\end{defn}

\begin{rem}\label{rem_g_m}
If $H^1(\cala (m)|_{\P^1 \times \{z\}})=0$ for a point $z \in R$,
$(*_m)$ is exact at $z$.
Hence the rational map $g_m$ is defined at $z$
and $g_m(z)$ is the point in $G_m=\Gr \big((m+1)s-d ,V_m \big)$
corresponding to the subspace $H^0 \big( \cala (m)|_{\P^1 \times \{z\}}  \big) \subset V_m$.
Hence $g_m$ is a morphism for $m \geq d-1$ as stated in {\cite[Section 4]{St}}.
Moreover, $g_m$ is defined on a nonempty open subset of $R$ for each $m \geq \lceil d/s \rceil -1$.
To see this, we take an injection
\[
\iota :  \cale = \calo_{\P^1}( - \lceil d/s \rceil  )^{\oplus s-l} \oplus \calo_{\P^1}( - \lceil d/s \rceil  +1)^{\oplus l} \arw V_{\P^1}
\]
for $l=  \lceil d/s \rceil  s -d \geq 0$.
Then $g_m $ is defined around $z=[V_{\P^1} \arw \coker \iota] \in R$ since $\cala |_{\P^1 \times \{z\}} \cong \cale$
and $H^1(\cale (m))=0$ for $m \geq \lceil d/s \rceil -1$.
\end{rem}

\begin{thm}[{\cite[Theorem 4.2]{St}}]\label{thm_embedding_by_St}
For $m \geq d$,
the morphism $g_m : R \arw G_m$ is a closed embedding.
\end{thm}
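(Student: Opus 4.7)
The plan is to construct, on an open neighborhood $U \subset G_m$ of $g_m(R)$, a morphism $h : U \arw R$ which is a left-inverse to $g_m$. Combined with the properness of $g_m$ (immediate from projectivity of $R$), this will force $g_m$ to be a closed immersion: $g_m$ becomes a section of the separated morphism $h$, hence a closed immersion into $U$ and therefore a locally closed immersion into $G_m$; properness will then make the image closed in $G_m$ as well.

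The first step is pointwise. A closed point of $G_m$ is a subspace $W \subset V_m = V \otimes H^0(\calo_{\P^1}(m))$, and from such a $W$ I would form the canonical morphism on $\P^1$
\[
\phi_W : W \otimes_k \calo_{\P^1}(-m) \arw V_{\P^1}
\]
obtained by composing $W \otimes \calo_{\P^1}(-m) \hookrightarrow V \otimes H^0(\calo_{\P^1}(m)) \otimes \calo_{\P^1}(-m)$ with the evaluation $H^0(\calo_{\P^1}(m)) \otimes \calo_{\P^1}(-m) \arw \calo_{\P^1}$. The fiberwise claim I would prove is: if $W = g_m(z)$ for some $z \in R$, then $\im(\phi_W) = \cala|_{\P^1 \times \{z\}}$. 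This follows because $\cala|_{\P^1 \times \{z\}} \subset V_{\P^1}$ splits as $\bigoplus_{i=1}^s \calo_{\P^1}(a_i)$ with $a_i \leq 0$ and $\sum a_i = -d$, so $a_i + m \geq -d + m \geq 0$ whenever $m \geq d$; hence $\cala(m)|_{\P^1 \times \{z\}}$ is globally generated, and the surjection $W \otimes \calo_{\P^1} = H^0(\cala(m)|_{\P^1 \times \{z\}}) \otimes \calo_{\P^1} \twoheadrightarrow \cala(m)|_{\P^1 \times \{z\}}$, twisted by $\calo_{\P^1}(-m)$ and composed with $\cala|_{\P^1 \times \{z\}} \hookrightarrow V_{\P^1}$, is exactly $\phi_W$.

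Second, I would globalize the construction. Let $\calu \subset V_m \otimes \calo_{G_m}$ be the universal subbundle. On $\P^1 \times G_m$ I form the relative version
\[
\Phi : (\pi_{G_m}^{\,*} \calu)(-m) \arw V_{\P^1 \times G_m},
\]
set $\calb_{G_m} = \coker(\Phi)$, and define
\[
U = \{ t \in G_m \mid \calb_{G_m}|_{\P^1 \times \{t\}} \text{ is locally free of rank $r$ and degree $d$} \}.
\]
This locus is open by upper semicontinuity, and on $\P^1 \times U$ cohomology and base change ensure that $\calb_{G_m}|_{\P^1 \times U}$ is flat over $U$ with the correct Hilbert polynomial; the resulting surjection $V_{\P^1 \times U} \twoheadrightarrow \calb_{G_m}|_{\P^1 \times U}$ then yields, via the universal property of the Quot scheme, the desired morphism $h : U \arw R$.

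The final step is to verify $h \circ g_m = \id_R$. Since $g_m^{\,*} \calu = \mathscr{A}_m = \pi_*(\cala(m))$, pulling back $\Phi$ along $g_m$ produces the relative evaluation $\pi^* \pi_*(\cala(m))(-m) \arw V_{\P^1 \times R}$, which factors through $\cala \hookrightarrow V_{\P^1 \times R}$. The pointwise claim will make the factored map $\pi^* \pi_*(\cala(m))(-m) \arw \cala$ surjective on every fiber of $\pi$, hence surjective by Nakayama, so its image is $\cala$ and its cokernel is $\calb$. This gives $g_m(R) \subset U$ and $h \circ g_m = \id_R$ simultaneously. The hard part will be the uniform fiberwise identification $\im(\phi_W) = \cala|_{\P^1 \times \{z\}}$ for all $z \in R$: this is exactly where the hypothesis $m \geq d$ enters, via the uniform bound $a_i \geq -d$ on the splitting types. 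For $m$ as small as $\lceil d/s \rceil$ this uniform global generation can fail, which is precisely why $g_m$ is only a birational map and not an embedding in that range.
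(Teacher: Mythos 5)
Your overall strategy --- build a retraction $h : U \arw R$ on an open neighborhood $U$ of the image, note that $h \circ g_m = \id_R$ makes $g_m$ a section of the separated morphism $h$ and hence a closed immersion into $U$, and then use properness of $R$ to close up the image in $G_m$ --- is sound, and your pointwise step is correct: for $m \geq d$ the splitting type of $\cala|_{\P^1 \times \{z\}}$ is bounded below by $-d$, so $\cala(m)|_{\P^1 \times \{z\}}$ is globally generated and $\im(\phi_{g_m(z)}) = \cala|_{\P^1 \times \{z\}}$. (For the record, the paper gives no proof of this statement; it quotes \cite[Theorem 4.2]{St}. Its own machinery, Lemma \ref{prop_Grassmann2} together with Proposition \ref{stratification_pr_2}, does in effect reprove it by exhibiting $\pr_2 : R_m \arw G_m$ as a Grassmannian bundle over each stratum $X_m^i \setminus X_m^{i+1}$ and observing that $X_m^1 = \emptyset$ exactly when $m \geq d$.)

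The genuine gap is in the definition of $U$. You require $\calb_{G_m}|_{\P^1 \times \{t\}} = \coker(\Phi_t)$ to be \emph{locally free} of rank $r$, but $R$ is the full Quot scheme, not $R^{\circ} = \Mor_d(\P^1, \G)$: for $z \in R \setminus R^{\circ}$ the universal quotient $\calb|_{\P^1 \times \{z\}}$ has torsion, and by your own fiberwise identification $\coker(\Phi_{g_m(z)}) = V_{\P^1}/\cala|_{\P^1 \times \{z\}} = \calb|_{\P^1 \times \{z\}}$. Hence $g_m(z) \notin U$ for every such $z$, so $g_m(R) \not\subset U$, the identity $h \circ g_m = \id_R$ is never established on all of $R$, and your argument only yields that $g_m$ is an immersion over $R^{\circ}$. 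For $r = 0$ the failure is total: every fiber of $\calb$ is torsion of length $d$, so your $U$ misses $g_m(R)$ entirely. The condition must instead be that $\coker(\Phi_t)$ has Hilbert polynomial $r(T+1)+d$, with no local freeness imposed. Even after this correction, the openness of $U$ and the flatness of $\calb_{G_m}$ over $U$ do not follow from ``upper semicontinuity'' and ``cohomology and base change'' as invoked, since $\calb_{G_m}$ is not flat to begin with; one has to argue, for instance, that $H^1$ of $\im(\Phi_t)(N)$ and of $\coker(\Phi_t)(N)$ vanish for $N$ large uniformly in $t$, so that $\chi\bigl(\coker(\Phi_t)(N)\bigr)$ is the corank of a matrix varying algebraically with $t$ and is therefore upper semicontinuous, and then control the locus where the Hilbert polynomial is exactly $r(T+1)+d$. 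This delicacy is precisely what the construction via the pair $(g_{m-1}, g_m)$ avoids: in diagram (\ref{diagram_K_Q}) the sheaf $\calk$ is presented as the cokernel of a fiberwise injective map of vector bundles, so flatness and base change are automatic, which is why Str\o mme (and Proposition \ref{prop} here) recovers the quotient from the two subspaces of $V_{m-1}$ and $V_m$ rather than from the single subspace of $V_m$.
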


Checking the proof of Theorem \ref{thm_embedding_by_St},
we can obtain the following lemma.

\begin{lem}\label{rem_bir}
For $m \geq \lceil d/s \rceil$,
the rational map $g_m : R \dashrightarrow g_m(R) \subset G_m$ is birational.
\end{lem}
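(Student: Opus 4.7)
The plan is to construct a rational left inverse to $g_m$ on a nonempty open subset of $R$; this will force $g_m$ to be birational onto its image. First I would let $U \subset R$ be the open locus of points $z$ with $H^1(\cala_z(m-1)) = 0$, where $\cala_z := \cala|_{\P^1 \times \{z\}}$. This locus is open by semicontinuity, and it is nonempty for $m \geq \lceil d/s \rceil$ by the argument in Remark \ref{rem_g_m} (it contains the point corresponding to the balanced splitting type considered there). On $U$ the map $g_m$ is defined, and $g_m(z)$ is the subspace $W_z := H^0(\cala_z(m)) \subset V_m$.

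The central observation is that $\cala_z \hookrightarrow V_{\P^1}$ can be recovered from $W_z$. For a locally free sheaf $\cale$ on $\P^1$, the vanishing $H^1(\cale(m-1)) = 0$ is equivalent to every summand $\calo_{\P^1}(a_i)$ of $\cale$ satisfying $a_i \geq -m$, which in turn is equivalent to $\cale(m)$ being globally generated. Hence for $z \in U$ the evaluation $W_z \otimes \calo_{\P^1}(-m) \twoheadrightarrow \cala_z$ is surjective, and composing with $\cala_z \hookrightarrow V_{\P^1}$ yields the natural map
\[
\phi_{W_z} : W_z \otimes \calo_{\P^1}(-m) \arw V_{\P^1}
\]
induced by the multiplication pairing $H^0(\calo_{\P^1}(m)) \otimes \calo_{\P^1}(-m) \arw \calo_{\P^1}$. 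Therefore $\im \phi_{W_z} = \cala_z$ as subsheaves of $V_{\P^1}$, and $z$ is determined by $W_z$.

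To globalize this, I would consider the tautological subbundle $\mathscr{U} \subset (V_m)_{G_m}$ and the associated morphism
\[
\Phi : \pi^* \mathscr{U} \otimes p_1^* \calo_{\P^1}(-m) \arw V_{\P^1 \times G_m}
\]
on $\P^1 \times G_m$, with $\pi$ the projection to $G_m$ and $p_1$ the projection to $\P^1$. Let $G_m^{\circ} \subset G_m$ be the open locus over which $\im \Phi$ is flat with fibers that are rank $s$ subbundles of $V_{\P^1}$ of degree $-d$. Over $G_m^{\circ}$ the universal property of the Quot scheme yields a morphism $h_m : G_m^{\circ} \arw R$, and the second paragraph shows both $g_m(U) \subset G_m^{\circ}$ and $h_m \circ g_m|_U = \id_U$, which gives the claimed birationality.

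The main obstacle will be verifying that $G_m^{\circ}$ is indeed a nonempty open subset containing $g_m(U)$, so that $h_m$ is an honest rational map. This reduces to checking that at each $z \in U$ the fiber of $\im \Phi$ at $g_m(z)$ is a rank $s$ degree $-d$ subbundle of $V_{\P^1}$, which is immediate from the pointwise identification with $\cala_z$, and then invoking standard openness of the flatness and subbundle conditions to extend this to a neighborhood; no delicate estimate should be needed beyond this bookkeeping.
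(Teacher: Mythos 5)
Your proof is correct and follows essentially the same route as the paper: the paper reduces birationality to global generation of $\cala|_{\P^1\times\{z\}}(m)$ for general $z$ by citing the argument of Str\o mme's Theorem 4.2, and then verifies global generation at the explicit split point of Remark \ref{rem_g_m} together with semicontinuity. Your proposal simply unpacks the cited Str\o mme argument explicitly (recovering $\cala_z$ as the image of the evaluation map $W_z\otimes\calo_{\P^1}(-m)\arw V_{\P^1}$ and globalizing via the tautological subbundle to produce a rational inverse), using the same equivalence between $H^1(\cala_z(m-1))=0$ and global generation of $\cala_z(m)$ and the same nonemptiness check.
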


\begin{proof}
By the argument in the proof of Theorem 4.2 in \cite{St},
the birationality of $g_m$ follows if $\cala |_{\P^1 \times \{z\}}(m)$ is globally generated for general $z \in R$.
Since $\cala |_{\P^1 \times \{z\}}(m)$ is globally generated if and only if $H^1(\cala |_{\P^1 \times \{z\}}(m-1))=0$,
it is enough to show that $\cala |_{\P^1 \times \{z\}}(m)$ is globally generated for {\it some} $z \in R$
by the upper semicontinuity of cohomology.
For $z=[V_{\P^1} \arw \coker \iota] \in R$ in Remark \ref{rem_g_m},
$\cala |_{\P^1 \times \{z\}}(m) \cong \cale(m)$ is globally generated for $m \geq \lceil d/s \rceil$.
\end{proof}

\begin{rem}\label{rem_not_bir}
Contrary to the case $m \geq \lceil d/s \rceil$,
the rational map $g_{\lceil d/s \rceil -1}$ is not birational for $(r,d) \neq (0,0)$ as follows.

Let $l=  \lceil d/s \rceil  s -d $.
Since $G_{\lceil d/s \rceil -1} =\Gr(l, V_{\lceil d/s \rceil -1})$,
it holds that $\dim G_{\lceil d/s \rceil -1} =l(n\lceil d/s \rceil -l )$ .
By $\dim R =nd+rs$, $n=r+s$, and $l=  \lceil d/s \rceil  s -d$,
we can check that
\[
\dim R_m - \dim G_{\lceil d/s \rceil -1}  =(s-l) \bigl((\lceil d/s \rceil +1)r +d\bigr) ,
\]
which is positive since $s-l >0$ and $(r,d) \neq (0,0)$.
See also Proposition \ref{stratification_of_pr_1_fiber}.
\end{rem}

The following proposition is useful to study $R$.

\begin{prop}[{\cite[Proposition 1.1]{St}}]\label{prop_by_St}
Let $T$ be a locally noetherian $k$-scheme and let $\calf$ be a coherent sheaf on $\P^1 \times T$, flat over $T$.
Assume $R^1 \pi_* \bigl(\calf(-1) \bigr) =0$.
Then there is a short exact sequence on $\P^1 \times T$
\[
0 \arw \pi^* \bigl(\pi_* \calf(-1) \bigr)(-1) \arw \pi^* \pi_* \calf \arw \calf \arw 0.
\]
This sequence is functorial in $\calf$,
and its formulation commutes with arbitrary base change $T' \arw T$.
Its first two terms are locally free.
\end{prop}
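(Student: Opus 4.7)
The plan is to construct the resolution as the pushforward of the Koszul resolution of the diagonal. Set $X = \P^1 \times \P^1 \times T$, denote by $p_{13}, p_{23}: X \arw \P^1 \times T$ the two projections (which forget, respectively, the second and the first copy of $\P^1$), and let $\Delta \subset X$ be the relative diagonal. The standard resolution of $\Delta$ reads
\[
0 \arw \calo_X(-1,-1,0) \arw \calo_X \arw \calo_{\Delta} \arw 0,
\]
where $\calo_X(-1,-1,0) := p_1^* \calo_{\P^1}(-1) \otimes p_2^* \calo_{\P^1}(-1)$ and the first map is multiplication by the section $x_0 y_1 - x_1 y_0$.

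First, I would tensor this resolution with $p_{13}^* \calf$. The tensored complex remains exact because $x_0 y_1 - x_1 y_0$ is a non-zero-divisor in $\calo_X$ that also acts as a non-zero-divisor on $p_{13}^* \calf$: fiberwise over $(x,t) \in \P^1 \times T$ the pullback $p_{13}^* \calf$ restricts to a constant sheaf on the $y$-factor $\P^1$, and the section becomes a degree-one polynomial in $y$, which is obviously a non-zero-divisor. Equivalently, $\mathrm{Tor}_1^{\calo_X}(\calo_\Delta, p_{13}^* \calf) = 0$, using $T$-flatness of $\calf$. This yields
\[
0 \arw p_{13}^* (\calf(-1)) \otimes p_{23}^* \calo_{\P^1 \times T}(-1) \arw p_{13}^* \calf \arw p_{13}^* \calf |_\Delta \arw 0,
\]
where $\calf(-1) = \calf \otimes \rho^* \calo_{\P^1}(-1)$ in the paper's notation, with $\rho: \P^1 \times T \arw \P^1$ the first projection.

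Next, I would apply $R p_{23*}$. The diagram formed by $p_{13}, p_{23}$ and the two copies of $\pi$ is Cartesian and the $\pi$'s are flat, so flat base change gives $R p_{23*}(p_{13}^* \calg) = \pi^* R \pi_* \calg$ for any coherent $\calg$ on $\P^1 \times T$. Combined with the projection formula, and the fact that $p_{13}|_\Delta$ and $p_{23}|_\Delta$ are isomorphisms, the long exact sequence for $R p_{23*}$ becomes
\[
0 \arw \pi^*(\pi_* \calf(-1))(-1) \arw \pi^* \pi_* \calf \arw \calf \arw \pi^*(R^1 \pi_* \calf(-1))(-1) \arw \cdots
\]
The hypothesis $R^1 \pi_* \calf(-1) = 0$ annihilates the fourth term, yielding the claimed short exact sequence.

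Finally, local freeness of the first two terms follows from cohomology-and-base-change once one has $R^1 \pi_* \calf(-1) = 0$ and, similarly, $R^1 \pi_* \calf = 0$. The latter reduces fiberwise to the elementary fact that any coherent sheaf $G$ on $\P^1$ with $H^1(G(-1)) = 0$ decomposes as torsion plus line bundles of degree $\geq 0$, and in particular also satisfies $H^1(G) = 0$. Functoriality in $\calf$ and compatibility with arbitrary base change $T' \arw T$ are immediate, since every step (the Koszul resolution of $\Delta$, the projection $p_{23}$, and the base-change/projection-formula identifications) is natural. The one delicate point I expect is the exactness of the Koszul complex after tensoring with $p_{13}^* \calf$: this is a routine but essential Tor-vanishing, and is the only place where $T$-flatness of $\calf$ enters non-formally.
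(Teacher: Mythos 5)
The paper gives no proof here---the statement is quoted from \cite[Proposition 1.1]{St}---and Str\o mme's own argument is precisely yours: tensor the Koszul resolution of the relative diagonal in $\P^1 \times \P^1 \times T$ with $p_{13}^* \calf$ and apply $Rp_{23*}$, using flat base change and the projection formula. Your proof is correct; the only small imprecision is the claim that $T$-flatness enters through the Tor-vanishing (locally the equation of the diagonal is monic in the second $\P^1$-variable, so multiplication by it is injective on $M \otimes_k k[y]$ for \emph{any} module $M$), whereas flatness is genuinely needed in the cohomology-and-base-change step that gives local freeness of $\pi_* \calf$ and $\pi_* \bigl(\calf(-1)\bigr)$.
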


By Proposition \ref{prop_by_St},
there is an exact sequence
\[
0 \arw V_{m-1} \otimes \calo_{\P^1}(-1) \arw V_m \otimes \calo_{\P^1} \arw V \otimes \calo_{\P^1}(m) \arw 0
\]
on $\P^1$ for each $m \geq 0$,
which induces an injection $j_m : V_{m-1} \arw V_m \otimes H^0 \bigl(\calo_{\P^1}(1) \bigr)$.

We denote the universal sequence on $G_m$ for $m \geq \lceil d/s \rceil -1$ by
\begin{align}\label{eq_univ_on_G_m}
\tag{${**}_m$} 
0 \arw \mathscr{K}_m \rightarrow (V_m)_{G_m} \rightarrow \mathscr{Q}_m \arw 0.
\end{align}

\vspace{2mm}
Fix $m \geq  \lceil d/s \rceil $.
Pulling back the sequences (${**}_{m-1}$) and (${**}_m$) to $G_{m-1} \times G_m$,
and tensoring the latter by $H:=H^0(\calo_{\P^1}(1)) $,
we obtain a diagram of locally free sheaves on $G_{m-1} \times G_m $:
\begin{equation}\label{eq_def_of_Rm}
\begin{gathered}
\xymatrix{
0 \ar[r] &\pr_1^* \mathscr{K}_{m-1}  \ar[r]^(0.4){i_{m-1}}  & (V_{m-1})_{G_{m-1} \times G_m } \ar[r] \ar[d]^{j_m} & \pr_1^*\mathscr{Q}_{m-1} \ar[r] & 0 \\
0 \ar[r] & \pr_2^*  \mathscr{K}_m \otimes H \ar[r]  & (V_{m})_{G_{m-1} \times G_m } \otimes H \ar[r]^(0.57){p_m}  &  \pr_2^* \mathscr{Q}_m \otimes H \ar[r] & 0  \makebox[0pt]{\hspace{1mm},}
\\
}
\end{gathered}
\end{equation}
where $\pr_1, \pr_2$ are the projections from $G_{m-1} \times G_m$ to $G_{m-1}, G_m$ respectively.
Now we can define $R_m$ as follows.

\begin{defn}[{\cite[Section 4]{St}}]\label{def_R_m}
For $m \geq \lceil d/s \rceil$,
we denote by $R_m \subset G_{m-1} \times G_m $
the closed subscheme defined by the vanishing of $p_m \circ j_m \circ i_{m-1}$.
\end{defn}

\vspace{2mm}
By definition,
there is the following commutative diagram on $R_m$
\begin{equation}
\begin{gathered}
\xymatrix{
0 \ar[r] &(\mathscr{K}_{m-1})_{R_m } \ar[r] \ar[d] & (V_{m-1})_{R_m} \ar[r] \ar[d]^{j_m} & (\mathscr{Q}_{m-1})_{R_m} \ar[r] \ar[d] & 0 \\
0 \ar[r] &(\mathscr{K}_m)_{R_m } \otimes H \ar[r]  & (V_{m})_{R_m } \otimes H \ar[r]  & (\mathscr{Q}_m)_{R_m } \otimes H \ar[r] & 0 \makebox[0pt]{\hspace{1mm},}\\
}
\end{gathered}
\end{equation}
where $(\mathscr{K}_{m-1})_{R_m } $, $(\mathscr{K}_m)_{R_m }$, $(\mathscr{Q}_{m-1})_{R_m}$, $(\mathscr{Q}_m)_{R_m }$ are
the restrictions of $\pr_1^* \mathscr{K}_{m-1}$, $\pr_2^*  \mathscr{K}_m$, $\pr_1^*\mathscr{Q}_{m-1}$, $\pr_2^* \mathscr{Q}_m$ on $R_m$ respectively.
Pulling back this diagram to $\P^1 \times R_m$ by $\pi=\pi_{R_m}$,
tensoring the lower sequence with the natural morphism $H \otimes \calo_{\P^1} \arw \calo_{\P^1}(1)$,
and tensoring the whole diagram by $\calo_{\P^1}(-1)$,
we obtain the following diagram on $\P^1 \times R_m$
\begin{equation}\label{diagram_K_Q}
\begin{gathered}
\xymatrix@C-=0.5cm{
           & 0 \ar[d]                                       & 0 \ar[d]                                             &                                                    & \\
0 \ar[r] & \pi^* (\mathscr{K}_{m-1})_{R_m }(-1) \ar[r] \ar[d]^a     & (V_{m-1})_{\P^1 \times R_m}(-1) \ar[r] \ar[d]^b & \pi^* (\mathscr{Q}_{m-1})_{R_m}(-1) \ar[r] \ar[d]     & 0 \\
0 \ar[r] & \pi^* (\mathscr{K}_m)_{R_m }  \ar[r] \ar[d] & (V_{m})_{\P^1 \times R_m }  \ar[r] \ar[d]     & \pi^*(\mathscr{Q}_m)_{R_m }  \ar[r] \ar[d] & 0 \\
           & \calk(m)  \ar[r]^(0.45)c \ar[d]                             & V_{\P^1 \times R_m} (m) \ar[r] \ar[d]      & \calq (m) \ar[r] \ar[d] & 0 \\
           & 0                                                           &0                                                              &0     \makebox[0pt]{\hspace{1mm},}                                              &   \\
}
\end{gathered}
\end{equation}
where $\calk$ and $\calq$ are defined by the cokernels of the vertical maps.

By tensoring $c$ in the diagram (\ref{diagram_K_Q}) with $\calo_{\P^1}(-m)$,
we have a morphism $\calk \arw V_{\P^1 \times R_m}$.
In the proof of Theorem 4.1 in \cite{St},
Str\o mme showed that $\calk \arw V_{\P^1 \times R_m}$ is injective after restricting to each fiber of $\pi : \P^1 \times R_m \arw R_m$ if $m \geq d$.
For $m \geq \lceil d/s \rceil$,
we have the following lemma.

\begin{lem}\label{lem_K}
For $m \geq \lceil d/s \rceil$,
the morphism $ \calk \arw V_{\P^1 \times R_m}$ satisfies condition $(\star_m)$.
\end{lem}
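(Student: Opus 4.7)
The plan is to analyze the fiberwise behavior of the first column of diagram \eqref{diagram_K_Q}. Fix a closed point $t \in R_m$, and let $K_{m-1}(t) \subset V_{m-1}$ and $K_m(t) \subset V_m$ denote the subspaces corresponding to $\pr_1(t)$ and $\pr_2(t)$. The defining equation of $R_m \subset G_{m-1} \times G_m$ says exactly $j_m(K_{m-1}(t)) \subset K_m(t) \otimes H$, so the Euler inclusion $V_{m-1} \otimes \calo_{\P^1}(-1) \hookrightarrow V_m \otimes \calo_{\P^1}$ restricted to the sub-trivial-bundle $K_{m-1}(t) \otimes \calo_{\P^1}(-1)$ lands inside $K_m(t) \otimes \calo_{\P^1}$. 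Checking fibers at each $p \in \P^1$, the Euler fiber map $V_{m-1} \arw V_m$ is injective, so its restriction $K_{m-1}(t) \arw K_m(t)$ is injective too; hence the sheaf map is a subbundle inclusion, and we obtain an exact sequence
\[
0 \arw K_{m-1}(t) \otimes \calo_{\P^1}(-1) \arw K_m(t) \otimes \calo_{\P^1} \arw \calk|_t(m) \arw 0
\]
with $\calk|_t$ locally free of rank $s$ and, by a direct degree computation, of degree $-d$.

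For condition i) globally, the same argument shows that the map $a$ in the first column of diagram \eqref{diagram_K_Q} has injective fiber map at every $(p,t) \in \P^1 \times R_m$, so its fiber rank is constantly $ms - d$. By the standard rank criterion the cokernel $\calk(m)$ is then locally free of rank $s$, hence so is $\calk$.

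For ii) and iii), take the long exact cohomology sequence on $\P^1$ of the displayed fiber sequence and of its twist by $\calo_{\P^1}(-1)$. Using $H^0(\calo_{\P^1}(-j)) = 0$ for $j = 1, 2$, $H^1(\calo_{\P^1}(-1)) = 0$, and $H^1(\calo_{\P^1}(-2)) \cong k$, one reads off $H^1(\calk|_t(m-1)) = H^1(\calk|_t(m)) = 0$ together with canonical identifications $H^0(\calk|_t(m)) \cong K_m(t)$ and $H^0(\calk|_t(m-1)) \cong K_{m-1}(t)$. The first vanishing is precisely ii). For iii), fiberwise vanishing of $H^1(\calk|_t(m))$ combined with the flatness of $\calk$ justifies cohomology and base change, yielding $\pi_*(\calk(m)) \otimes k(t) \cong K_m(t)$; chasing the snake-lemma origin of $c$ in diagram \eqref{diagram_K_Q} identifies the induced map to $V_m$ with the tautological inclusion $K_m(t) \hookrightarrow V_m$, which is injective.

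The main technical step is establishing that $a$ is fiberwise a subbundle inclusion, since all three conditions then reduce to elementary cohomology on $\P^1$. Fortunately this reduces to the Euler fiber map being injective combined with the defining equation of $R_m$, an observation that works for the full range $m \geq \lceil d/s \rceil$ asserted in the lemma rather than just for $m \geq d$.
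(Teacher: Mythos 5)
Your proof is correct and follows essentially the same route as the paper: both arguments restrict the left column of diagram \eqref{diagram_K_Q} to a fiber $\P^1\times\{t\}$, observe that the first map is fiberwise injective because the Euler inclusion is (the paper phrases this as ``$a$ is pointwise injective since so is $b$''), read off rank $s$ and degree $-d$ from the resulting two-term resolution, and obtain ii) and iii) from the long exact cohomology sequence together with the identification $H^0\bigl(\calk(m)|_{\P^1\times\{t\}}\bigr)\cong\mathscr{K}_m\otimes k(t)\hookrightarrow V_m$. Your write-up simply makes explicit a few steps the paper leaves implicit (the role of the defining equation of $R_m$ in producing the subbundle inclusion, and cohomology and base change for iii)).
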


\begin{proof}
First note that $a$ in the diagram (\ref{diagram_K_Q}) is pointwise injective since so is $b$.
Hence $\calk$ is locally free on $R_m$.
For $z \in R_m$,
we have an exact sequence
\[
0 \arw \big((\mathscr{K}_{m-1})_{R_m} \otimes k(z)\big) \otimes \calo_{\P^1}(-1) \arw \big((\mathscr{K}_m)_{R_m} \otimes k(z)\big) \otimes \calo_{\P^1} \arw \calk(m) |_{\P^1 \times \{z\}} \arw 0
\]
by restricting the left column in the diagram (\ref{diagram_K_Q}) on $\P^1 \times \{z\}$.
By this exact sequence,
$H^1 \big( \calk(m-1) |_{\P^1 \times \{z\}} \big)=0$ holds.
Hence ii) in $(\star_m)$ is satisfied. 
Since
\[
\dim \, (\mathscr{K}_{m-1})_{R_m} \otimes k(z) = ms-d, \quad \dim \, (\mathscr{K}_m)_{R_m } \otimes k(z) = (m+1)s-d,
\]
$\calk |_{\P^1 \times \{z\}}$ is of rank $s$ and degree $-d$.
Thus $ \calk \arw V_{\P^1 \times R_m}$ satisfies i) in $(\star_m)$.
Furthermore,
the linear map
\begin{align*} 
H^0 \bigl(\calk(m) |_{\P^1 \times \{z\}} \bigr) \cong \mathscr{K}_m \otimes k(z)   \arw V_m=V \otimes H^0 \bigl(\calo_{\P^1}(m) \bigr)
\end{align*}
induced by $c$ is injective.
Thus  iii) in $(\star_m)$ holds.
\end{proof}

Using the above diagrams,
Str\o mme proved the following theorem.

\begin{thm}[{\cite[Theorem 4.1]{St}}]\label{R=R_m_by_St}
The morphism
\[
(g_{m-1},g_m) : R \arw G_{m-1} \times G_m
\]
is an isomorphism onto $R_m$ for $m \geq d$.
\end{thm}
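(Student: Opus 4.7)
The plan is to construct an explicit inverse morphism $h\colon R_m\arw R$ via the universal property of the Quot scheme and to check that $h$ and $(g_{m-1},g_m)$ are mutually inverse. The hypothesis $m\geq d$ ensures that both $g_{m-1}$ and $g_m$ are defined everywhere (Remark \ref{rem_g_m}), so $(g_{m-1},g_m)$ is a genuine morphism $R\arw G_{m-1}\times G_m$. I would first verify that its image lies in the closed subscheme $R_m$. For $z\in R$, writing $\cale:=\cala|_{\P^1\times\{z\}}$, the hypothesis $m\geq d$ gives $H^1(\cale(m-1))=0$ (each summand $\calo_{\P^1}(a_i)$ in the Birkhoff--Grothendieck splitting of the rank-$s$ subsheaf $\cale\subset V_{\P^1}$ of degree $-d$ satisfies $-d\leq a_i\leq 0$), so Proposition \ref{prop_by_St} applies to $\cale(m)$. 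Functoriality of that resolution with respect to $\cale\hookrightarrow V_{\P^1}$ then shows that $j_m$ carries $H^0(\cale(m-1))\subset V_{m-1}$ into $H^0(\cale(m))\otimes H\subset V_m\otimes H$, which is precisely the vanishing of $p_m\circ j_m\circ i_{m-1}$ at $(g_{m-1}(z),g_m(z))$. Running the same computation in families over $R$ upgrades this to a scheme-theoretic factorization.

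Next I would construct the inverse. By Lemma \ref{lem_K} the tautological map $\calk\arw V_{\P^1\times R_m}$ satisfies $(\star_m)$; under the stronger bound $m\geq d$, Str\o mme's argument in the proof of Theorem \ref{thm_embedding_by_St} shows in addition that this map is injective on every fiber $\P^1\times\{z\}$. Combined with flatness of $\calk$ over $R_m$ of relative rank $s$ and degree $-d$, this yields a flat family $V_{\P^1\times R_m}/\calk$ of rank $r$, degree $d$ quotients, and the universal property of $R$ produces $h\colon R_m\arw R$. To check $h\circ(g_{m-1},g_m)=\id_R$, I would pull $\calk$ back along $(g_{m-1},g_m)$: the resulting subsheaf of $V_{\P^1\times R}$ agrees fiberwise with $\cala$, and a diagram chase in (\ref{diagram_K_Q}) promotes this to a global canonical identification. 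For the opposite composition $(g_{m-1},g_m)\circ h=\id_{R_m}$, the definition of $h$ gives $g_{m-1}(h(z))=H^0(\calk|_{\P^1\times\{z\}}(m-1))$ and $g_m(h(z))=H^0(\calk|_{\P^1\times\{z\}}(m))$, which the diagram (\ref{diagram_K_Q}) identifies with $(\mathscr{K}_{m-1})_{R_m}\otimes k(z)=\pr_1(z)$ and $(\mathscr{K}_m)_{R_m}\otimes k(z)=\pr_2(z)$ respectively.

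The main technical hurdle is the functorial comparison between the universal families on the two sides, in particular showing that the pullback of the subsheaf $\calk\subset V_{\P^1\times R_m}$ along $(g_{m-1},g_m)$ agrees with $\cala\subset V_{\P^1\times R}$ as \emph{subsheaves}, not merely up to fiberwise isomorphism. The bound $m\geq d$ enters critically here: it is what makes $g_{m-1}$ and $g_m$ everywhere defined and what ensures the fiberwise injectivity of $\calk\arw V_{\P^1\times R_m}$. For $\lceil d/s\rceil\leq m<d$ both of these properties fail on proper subsets, which is exactly why Theorem \ref{thm_moduli} only produces a birational map $\tilde g_m$ rather than an isomorphism.
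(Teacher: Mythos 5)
Your overall architecture is the right one, and it is the strategy the paper itself is built around: showing that $(g_{m-1},g_m)$ factors through $R_m$ is exactly the second half of the proof of Proposition \ref{prop} applied to the universal family $\cala \arw V_{\P^1\times R}$, which satisfies $(\star_m)$ for $m\geq d$ by your Birkhoff--Grothendieck computation, and the inverse is indeed produced from the tautological $\calk\arw V_{\P^1\times R_m}$ via the universal property of the Quot scheme. The problem is the single step on which the whole theorem hinges: the fiberwise injectivity of $\calk\arw V_{\P^1\times R_m}$. You attribute it to ``Str\o mme's argument in the proof of Theorem \ref{thm_embedding_by_St}'', but Theorem \ref{thm_embedding_by_St} is \cite[Theorem 4.2]{St}, the embedding $g_m:R\hookrightarrow G_m$; its proof reconstructs $\cala|_{\P^1\times\{z\}}$ from $H^0(\cala|_{\P^1\times\{z\}}(m))$ for points that already lie on $R$, and says nothing about an arbitrary point of the degeneracy scheme $R_m$. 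The paper locates this injectivity in the proof of \cite[Theorem 4.1]{St} --- that is, in the proof of the very statement you are asked to prove --- so as written your argument is circular at its crux. This is not cosmetic: Lemma \ref{lem_K} establishes conditions i)--iii) of $(\star_m)$ for every $m\geq\lceil d/s\rceil$, and for $\lceil d/s\rceil\leq m<d$ the fiberwise injectivity genuinely fails (otherwise $R_m$ would be isomorphic to $R$, contradicting Propositions \ref{stratification_pr_2} and \ref{stratification_of_pr_1}), so an honest use of $m\geq d$ beyond what $(\star_m)$ provides must be exhibited at precisely this point.

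The gap is fillable by a short argument, which you should supply. Fix $z\in R_m$, let $\caln$ be the kernel of $\calk|_{\P^1\times\{z\}}\arw V_{\P^1}$ and $\cali$ the image. Since $\caln(m)\arw V_{\P^1}(m)$ is zero and $H^0\bigl(\calk|_{\P^1\times\{z\}}(m)\bigr)\arw V_m$ is injective by iii) of $(\star_m)$, we get $H^0(\caln(m))=0$. On the other hand $\cali$ is a torsion-free subsheaf of the trivial bundle $V_{\P^1}$, hence a direct sum of line bundles of nonpositive degree, so $\deg\caln=-d-\deg\cali\geq -d$. If $\caln\neq 0$ it is locally free of rank $t\geq 1$, whence $\deg\caln(m)=\deg\caln+tm\geq m-d\geq 0$; but a nonzero vector bundle on $\P^1$ of nonnegative degree has a nonzero section, contradicting $H^0(\caln(m))=0$. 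Hence $\caln=0$. With this in place the remainder of your proposal --- flatness of the quotient $V_{\P^1\times R_m}/\calk$, and the identification of the two composites with the identities via Proposition \ref{prop_by_St} and the diagram (\ref{diagram_K_Q}) --- goes through as you describe.
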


By a similar argument to the proof of Theorem \ref{R=R_m_by_St},
we can show that $R_m$ are moduli spaces as in Theorem \ref{thm_moduli} for $m \geq \lceil d/s \rceil$ as follows.

\begin{prop}\label{prop}
For $m \geq \lceil d/s \rceil$,
$R_m$ is the fine moduli space parametrizing equivalence classes $ [\cale \arw V_{\P^1 \times T}]$
which satisfy $(\star_m)$ for each locally noetherian $k$-scheme $T$.
\end{prop}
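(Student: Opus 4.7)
Lemma \ref{lem_K} already supplies the universal family: the morphism $\calk \arw V_{\P^1 \times R_m}$ satisfies $(\star_m)$, so pullback along any morphism $T \arw R_m$ produces a family over $T$ satisfying $(\star_m)$. The whole task is therefore to invert this association: given any equivalence class $[\iota \colon \cale \arw V_{\P^1 \times T}]$ satisfying $(\star_m)$, I need to construct a unique morphism $T \arw R_m$ along which the pullback of $\calk \arw V_{\P^1 \times R_m}$ is equivalent to $\iota$.

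Given such $\iota$, I would apply Proposition \ref{prop_by_St} to $\calf = \cale(m)$, which is flat over $T$ by i) and satisfies $R^1 \pi_* \calf(-1) = R^1 \pi_* (\cale(m-1)) = 0$ by ii). This yields a short exact sequence
\[
0 \arw \pi^* (\pi_* (\cale(m-1)))(-1) \arw \pi^* (\pi_* (\cale(m))) \arw \cale(m) \arw 0
\]
on $\P^1 \times T$, functorial in $\cale$, base-change compatible, and sitting above the analogous sequence for $V_{\P^1 \times T}(m)$. Cohomology and base change using ii) (which also forces $R^1 \pi_* \cale(m) = 0$) together with fiberwise Riemann--Roch gives local freeness of $\pi_*(\cale(m-1))$ and $\pi_*(\cale(m))$ on $T$ of ranks $ms - d$ and $(m+1)s - d$. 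Condition iii) asserts pointwise injectivity of $\pi_*(\cale(m)) \arw (V_m)_T$, hence a subbundle inclusion that classifies a morphism $T \arw G_m$.

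For the degree $m-1$ piece, I would derive pointwise injectivity of $\pi_*(\cale(m-1)) \arw (V_{m-1})_T$ from iii) by a short multiplication argument: any nonzero $s \in H^0(\cale(m-1)|_t)$ in the kernel, multiplied by any nonzero $y \in H$, produces a section $s \cdot y \in H^0(\cale(m)|_t)$ which is nonzero (its composition $\calo_{\P^1} \arw \cale(m)|_t$ cannot vanish on the dense open where $y$ is nonzero) but maps to zero in $V_m$, contradicting iii). Thus we also obtain a morphism $T \arw G_{m-1}$, and the pair assembles into a morphism $T \arw G_{m-1} \times G_m$. To see it factors through $R_m$, I would push the displayed diagram forward after twisting by $\calo_{\P^1}(1)$: the resulting commutative square on $T$ exhibits the composition $\pi_*(\cale(m-1)) \arw (V_{m-1})_T \arw (V_m)_T \otimes H$ as factoring through $\pi_*(\cale(m)) \otimes H$, which is precisely the vanishing of the pullback of $p_m \circ j_m \circ i_{m-1}$.

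For mutual inverseness and well-definedness on equivalence classes: pulling back diagram (\ref{diagram_K_Q}) along the constructed morphism $T \arw R_m$ reproduces, by functoriality of Proposition \ref{prop_by_St} and its base-change compatibility, the same exact sequence displayed above, yielding a canonical isomorphism $\calk \cong \cale$ compatible with the maps to $V_{\P^1 \times T}$; conversely, equivalent families give the same two subbundles and hence the same classifying morphism. I expect the main obstacle to be the cascading of condition iii) from degree $m$ down to degree $m-1$ together with the diagrammatic verification that the classifying morphism factors through the subscheme $R_m \subset G_{m-1} \times G_m$; once these are set up, the remaining bookkeeping follows Str\o mme's proof of Theorem \ref{R=R_m_by_St} essentially verbatim.
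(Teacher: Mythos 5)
Your proposal follows essentially the same route as the paper: one direction comes from pulling back the universal family $\calk \arw V_{\P^1 \times R_m}$ via Lemma \ref{lem_K}, and the converse applies Proposition \ref{prop_by_St} to $\cale(m)$, uses the universal property of $G_{m-1}\times G_m$ on the locally free quotients in degrees $m-1$ and $m$, and checks that $p_m\circ j_m\circ i_{m-1}$ pulls back to zero. Your explicit multiplication-by-$y\in H$ argument for the pointwise injectivity of $\pi_*(\cale(m-1))\arw (V_{m-1})_T$ is a correct justification of a step the paper asserts without detail, and the rest matches the paper's proof.
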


\begin{proof}
For any morphism $\varphi : T \arw R_m$,
we obtain $(\id_{\P^1} \times \varphi)^* \calk \arw V_{\P^1 \times T}$ by pulling back $\calk \arw V_{\P^1 \times R_m}$ in (\ref{diagram_K_Q}).
By Lemma \ref{lem_K},
$(\id_{\P^1} \times \varphi)^* \calk \arw V_{\P^1 \times T}$ satisfies $(\star_m)$
and we obtain an equivalence class $[(\id_{\P^1} \times \varphi)^* \calk \arw V_{\P^1 \times T}]$.

\vspace{2mm}
Conversely,
we can construct a morphism $T \arw R_m$
from $[\iota : \cale \arw V_{\P^1 \times T} ]$ which satisfies $(\star_m)$ as follows.
Let $\mathscr{C}_{m-1}$ and $\mathscr{C}_m$ be the cokernels of 
\begin{align}\label{eq_2_injection}
 \pi_* \big(\cale(m-1) \big)  \arw (V_{m-1})_T , \quad  \pi_* \big(\cale(m) \big)  \arw (V_{m})_T 
\end{align}
induced by $\iota$ respectively.
Since $\iota$ satisfies $(\star_m)$,
the morphisms (\ref{eq_2_injection}) are injective and
$\mathscr{C}_{m-1}$ and $\mathscr{C}_m$ are locally free of rank $mr+d$ and $(m+1)r+d$ respectively.
By the universal properties of Grassmannians, we obtain a morphism $ \psi: T \arw G_{m-1} \times G_m$,
which satisfies $ \psi^* ( \pr_1^* \mathscr{Q}_{m-1})= \mathscr{C}_{m-1}$ and $\psi^*(\pr_2^* \mathscr{Q}_m)=\mathscr{C}_m$.

Applying Proposition \ref{prop_by_St} to $\cale(m) \arw V_{\P^1 \times T} (m) $,
we obtain
\begin{equation*}
\xymatrix{ 
           & 0 \ar[d]                                       & 0 \ar[d]                                             &                                                    & \\
0 \ar[r] & \pi^* \big(\pi_* \cale(m-1) \big)(-1) \ar[r] \ar[d]     & (V_{m-1})_{\P^1 \times T}(-1) \ar[r] \ar[d] & \pi^* \mathscr{C}_{m-1}(-1) \ar[r] \ar[d]     & 0 \\
0 \ar[r] & \pi^* \big(\pi_* \cale(m) \big)  \ar[r] \ar[d] & (V_{m})_{\P^1 \times T }  \ar[r] \ar[d]     & \pi^* \mathscr{C}_m \ar[r]  & 0 \\
           & \cale (m)  \ar[r] \ar[d]                             & V_{\P^1 \times T} (m) \ar[d]      &  &  \\
           & 0                                                           &0    \makebox[0pt]{\hspace{1mm}.}                                                               &                                                 &   \\
}
\end{equation*}
Tensoring $\calo_{\P^1}(1)$ and pushing forward by $\pi$,
we see that
\[
\psi^* (p_m \circ  j_m \circ i_{m-1}) : \psi^* ( \pr_1^* \mathscr{K}_{m-1})= \pi_* (\cale(m-1)) 
\arw \mathscr{C}_m \otimes H =\psi^*(\pr_2^* \mathscr{Q}_m) \otimes H
\]
is zero. 
Hence $\psi$ factors through $R_m$ since $R_m$ is the locus where $p_m \circ  j_m \circ i_{m-1}$ vanishes.
The cokernels $\mathscr{C}_{m-1}$ and $\mathscr{C}_m$ does not depend on
the choice of the representative $\cale \arw V_{\P^1 \times T}$ of the equivalence class $[\cale \arw V_{\P^1 \times T} ]$.
Thus the morphism $\psi : T \arw R_m$ is defined for the equivalence class $[\cale \arw V_{\P^1 \times T} ]$.
\end{proof}

\section{Smoothness and irreducibility of $R_m$}\label{sec_sm}

In this section,
we prove the rest part of Theorem \ref{thm_moduli},

First,
we show that $R_m$ is smooth and irreducible for any $m \geq \lceil d/s \rceil$.
The proof is similar to that of Theorem 2.1 in \cite{St},
where Str\o mme proved the smoothness and irreducibility of $R$.
However, there is a slight difference.
In the proof, Str\o mme used the universal quotient sheaf $\calb$ on $\P^1 \times R$,
but we use not the quotient $\calq$ but $\calk$ in the diagram (\ref{diagram_K_Q}).

\begin{prop}\label{prop_R_m_is_SQM}
For $m \geq \lceil d/s \rceil $,
$R_m$ is smooth and irreducible. 
\end{prop}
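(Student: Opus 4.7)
My plan is to adapt Strømme's proof of Theorem 2.1 in \cite{St} to $R_m$, using the universal subsheaf $\calk \arw V_{\P^1 \times R_m}$ from Lemma~\ref{lem_K} and the fine moduli description of Proposition~\ref{prop}, in place of the universal quotient on $R$. The argument has two parts: a matching pair of bounds giving smoothness, and a separate connectedness argument giving irreducibility.

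First, for the dimension lower bound: since $R_m \subset G_{m-1} \times G_m$ is the vanishing locus of $p_m \circ j_m \circ i_{m-1}$, viewed as a section of the rank $2(ms-d)((m+1)r+d)$ bundle $\pr_1^* \mathscr{K}_{m-1}^\vee \otimes \pr_2^* \mathscr{Q}_m \otimes H$, every irreducible component of $R_m$ has local dimension at least
\[
\dim G_{m-1} + \dim G_m - 2(ms-d)((m+1)r+d),
\]
and a short expansion (using $\dim G_{m-1} = (ms-d)(rm+d)$ and $\dim G_m = ((m+1)s-d)((m+1)r+d)$) shows that this quantity equals $nd+rs$.

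Next, for the upper bound on the Zariski tangent space $T_z R_m$ at a point $z$ corresponding to $[\iota : \cale \arw V_{\P^1}]$, I would use deformation theory via the functorial description in Proposition~\ref{prop}. Since conditions (ii) and (iii) in $(\star_m)$ are open, $T_z R_m$ is identified with the space of first-order deformations of the pair $(\cale, \iota)$, which once $\iota$ is fiberwise injective becomes $\Hom(\cale, \calf)$ with $\calf := \coker \iota$. The main technical point is this fiberwise injectivity: for $m \geq d$ it was established by Strømme, but for $\lceil d/s \rceil \leq m < d$, I would argue directly that any locally free kernel $K \subset \cale$ on a fiber satisfies $H^0(K(m)) = 0$ by $(\star_m)$-iii), so every summand of $K$ has degree strictly less than $-m$; on the other hand, $\cale/K \hookrightarrow V_{\P^1}$ has nonpositive degree, forcing $\deg K \geq -d$, and these bounds together with $m \geq \lceil d/s \rceil$ rule out nonzero $K$. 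Given injectivity, the splitting $\cale \cong \bigoplus \calo(a_i)$ on a fiber has $a_i \leq 0$, so $\cale^\vee$ is globally generated and $H^1(\cale^\vee) = 0$. Tensoring $0 \arw \cale \arw V_{\P^1} \arw \calf \arw 0$ with the locally free $\cale^\vee$ and using the vanishing of $H^2$ on $\P^1$ yields $H^1(\cale^\vee \otimes \calf) = 0$, so Riemann--Roch on $\P^1$ gives
\[
\dim T_z R_m = \chi(\cale^\vee \otimes \calf) = s(d+r) + rd = nd + rs.
\]
Combined with the lower bound, this forces $\dim_z R_m = \dim T_z R_m = nd + rs$ at every closed point, so $R_m$ is smooth of dimension $nd + rs$.

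For irreducibility, smoothness implies $R_m$ is a disjoint union of smooth components, each of pure dimension $nd + rs$. The birational map $\tilde{g}_m : R \dashrightarrow R_m$ is defined on a nonempty open subset of the irreducible variety $R$ (Remark~\ref{rem_g_m}), so its image lies densely in a unique component $R_m^{\mathrm{main}}$. To rule out other components one must show $R_m$ is connected, and this is the main obstacle I anticipate. Since the matching of dimensions in the smoothness argument shows $R_m$ is a local complete intersection of the expected codimension in $G_{m-1} \times G_m$, the Koszul complex on $p_m \circ j_m \circ i_{m-1}$ resolves $\calo_{R_m}$, and $H^0(R_m, \calo_{R_m}) = k$ should follow from a Borel--Weil--Bott type vanishing for the relevant Schur functors applied to the universal subbundles $\mathscr{K}_{m-1}$ and $\mathscr{K}_m$ on the product of Grassmannians. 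Connectedness then forces $R_m = R_m^{\mathrm{main}}$, completing the proof.
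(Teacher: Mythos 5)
Your proposal takes a genuinely different route from the paper --- the paper never computes a tangent space; it realizes $R_m$ as the base of a principal $\GL(s+d)\times\GL(d)$-bundle whose total space is identified with an open subset of an affine space of framed data $(\nu,\mu)$, from which smoothness and irreducibility both fall out at once. Your expected-dimension lower bound is correct (the arithmetic does give $nd+rs$), but the matching upper bound on $T_zR_m$ rests on a claim that is false: condition $(\star_m)$ does \emph{not} force $\iota$ to be fiberwise injective. Your own inequalities show only $-d\le\deg K\le -k(m+1)$ for $k=\rank K$, which is compatible with $k\ge 1$ whenever $m+1\le d$, i.e.\ throughout the entire range $\lceil d/s\rceil\le m\le d-1$ where $R_m$ differs from $R$. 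Concretely, take $s=2$, $d=10$, $m=5=\lceil d/s\rceil$, $\cale=\calo_{\P^1}(-5)^{\oplus 2}$, choose a subbundle $\calo_{\P^1}(-10)\subset\cale$ with locally free quotient $\calo_{\P^1}$, and let $\iota$ be the composite $\cale\twoheadrightarrow\calo_{\P^1}\hookrightarrow V_{\P^1}$. Then $H^1(\cale(4))=0$ and $H^0(\cale(5))\arw V_5$ is injective because $H^0(\calo_{\P^1}(-5))=0$, so $(\star_5)$ holds while $\ker\iota=\calo_{\P^1}(-10)\neq 0$. Such non-injective points are exactly the points of $R_m$ that do not come from $R$ --- the locus where the proposition has content --- and there the identification $T_zR_m=\Hom(\cale,\coker\iota)$ is not valid (the tangent space of the moduli problem of pairs $(\cale,\iota)$ modulo automorphisms of $\cale$ is a hypercohomology of the complex $\calh om(\cale,\cale)\arw\calh om(\cale,V_{\P^1})$, which only reduces to $\Hom(\cale,\coker\iota)$ when $\iota$ is injective with the right cokernel). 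So the upper bound, and hence smoothness, is unproved precisely where it is needed.

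The irreducibility step is also only a plan: ``$H^0(R_m,\calo_{R_m})=k$ should follow from a Borel--Weil--Bott type vanishing'' is a reasonable strategy for a local complete intersection cut out by a section of a globally generated homogeneous bundle on $G_{m-1}\times G_m$, but you have not carried out the vanishing, and it is not routine. This is the second place where the paper's construction does real work for free: irreducibility of $R_m$ is inherited from irreducibility of an open subset of the affine space $\overline{X}$ through the two frame bundles. If you want to salvage your approach, the fix for smoothness is to compute the tangent space of the pair-moduli functor at arbitrary (not necessarily injective) $\iota$ satisfying $(\star_m)$, or to switch to the paper's presentation of $R_m$ as a $\GL$-quotient of a smooth irreducible parameter space.
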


\begin{proof}
Let $M_0 := k^{\oplus s+d},M_{-1} :=k^{\oplus d}$ be vector spaces of dimensions $s+d,d$ respectively.
Let $W$ be the vector space 
\[
W = \Hom_{\P^1}(M_0 \otimes \calo_{\P^1}, M_{-1} \otimes \calo_{\P^1}(1)) \times \Hom (M_0,V)
\]
and let $\overline{X} = \Spec( \Sym W^{\vee})$ be the associated affine space. 
On $\P^1 \times \overline{X} $,
we have a tautological diagram
\[
\xymatrix{
(M_0)_{\P^1 \times \overline{X}} \ar[r]^(0.43){\nu} \ar[d]^{\mu} & (M_{-1})_{\P^1 \times \overline{X}} (1) \\
V_{\P^1 \times \overline{X}} .&  \\
}\]
Let $X_m \subset \overline{X}$ be the open subset defined by the following three conditions;
for each $x \in X_m$,
\begin{itemize}
\item[1)] $\nu |_{\P^1 \times \{x\}}$ is surjective,
\item[2)] $H^1\big((\ker \nu)  |_{\P^1 \times \{x\}}  \otimes \calo_{\P^1}(m-1) \big)=0$,
\item[3)] the induced map $H^0\big( (\ker \nu)  |_{\P^1 \times \{x\}}  \otimes \calo_{\P^1}(m)\big) \arw V_m$ is injective.
\end{itemize}

By 1),
$(\ker \nu) |_{\P^1 \times \{x\}}$ is locally free of rank $s$ and degree $-d$ for $x \in X_m$.
Set $\widetilde{\calk} = (\ker \nu) |_{\P^1 \times X_m}$.
By 1) - 3),
\[
\widetilde{\calk} \hookrightarrow (M_0)_{\P^1 \times X_m}  \stackrel{\mu |_{\P^1 \times X_m}}{\longrightarrow} V_{\P^1 \times X_m} 
\]
satisfies condition $(\star_m)$.
By Proposition \ref{prop},
we have a morphism $g: X_m \arw R_m$.

\vspace{2mm}
For $z \in R_m$,
we can write $\calk |_{\P^1 \times \{z\}} \cong \bigoplus_{i=1}^s \calo(-a_i)$ for some $a_i \in \Z$ by Grothendieck.
Since $\calk \arw V_{\P^1 \times R_m}$ satisfies condition $(\star_m)$ by Lemma \ref{lem_K},
$H^0  \big(\calk |_{\P^1 \times \{z\}} (m) \big) \arw H^0 (V_{\P^1} \otimes (m) )$ is injective.
Hence all $a_i $ are nonnegative.
Thus we can apply Proposition \ref{prop_by_St} to $\calk^{\vee}$,
and we have an exact sequence
\begin{align}\label{eq_dual_K}
0 \arw \pi^* \pi_* \big(\calk^{\vee}(-1)\big)(-1) \arw \pi^* \pi_* (\calk^{\vee})  \arw \calk^{\vee} \arw 0
\end{align}
for $\pi = \pi_{R_m}$.
Since $ \calk^{\vee} |_{\P^1 \times \{z\}}$ is locally free of rank $ s$ and degree $d$ for $z \in R_m$,
$\pi_* (\calk^{\vee})$ and $ \pi_* \big(\calk^{\vee}(-1)\big)$ are locally free of ranks $ s+d$ and $d$ respectively.
Let $Y_0 \arw R_m$ (resp.\ $Y_{-1} \arw R_m$) be the principal $\GL(s+d)$-bundle (resp.\ $\GL(d)$-bundle)
associated to $\pi_*(\calk^{\vee})$ (resp.\ $\pi_* \big(\calk^{\vee}(-1) \big)$) (see \cite[Section 2]{St} for principal $\GL$-bundles).

\vspace{2mm}
Pushing forward the dual sequence of
\begin{align}\label{eq_exact_seq_tilde_K}
0 \arw \widetilde{\calk} \arw (M_0)_{\P^1 \times X_m} \stackrel{\nu}{\arw} (M_{-1})_{\P^1 \times X_m}(1) \arw 0
\end{align}
by $\pi_{X_m}$,
we have an isomorphism $(M_0^{\vee})_{X_m} \arw {\pi_{X_m}}_* (\widetilde{\calk}^{\vee})$.
By this isomorphism,
we can identify the dual of (\ref{eq_exact_seq_tilde_K}) with the exact sequence 
\[
0 \arw \pi_{X_m}^*{\pi_{X_m}}_* \big(\widetilde{\calk}^{\vee}(-1)\big)(-1) \arw \pi_{X_m}^*{\pi_{X_m}}_* (\widetilde{\calk}^{\vee}) \arw \widetilde{\calk}^{\vee} \arw 0
\]
obtained by applying Proposition \ref{prop_by_St} to $\widetilde{\calk}^{\vee}$.
In particular, we have $(M_{-1}^{\vee})_{X_m} \cong {\pi_{X_m}}_* \big( \widetilde{\calk}^{\vee} (-1) \big) $.
By the construction of $g : X_m \arw R_m$,
it holds that $(\id_{\P^1} \times g)^* \calk = \widetilde{\calk}$.
Hence we have
\[
g^*  \pi_* (\calk^{\vee}) = {\pi_{X_m}}_* (\widetilde{\calk}^{\vee}) \cong (M_0^{\vee} )_{X_m} = \calo_{X_m}^{\oplus s+d}.
\]
Similarly,
we have $g^*  \pi_* (\calk^{\vee}(-1)) \cong (M_{-1}^{\vee} )_{X_m} = \calo_{X_m}^{\oplus d} $.
By the universal property of principal $\GL$-bundles,
$g$ factors through $Y :=Y_0 \times_{R_m} Y_{-1}$ as follows.
\[
\xymatrix{ 
X_m \ar[r]^(0.3)\sigma \ar[rd]_g & Y =Y_0 \times_{R_m} Y_{-1} \ar[d]^{\rho} \\
  & R_m \\
 }
\]
We show that $\sigma$ is an isomorphism. 
Since $Y_0$ and $Y_{-1}$ are principal $\GL$-bundles,
we have isomorphisms on $Y$
\begin{align}\label{eq_2isom}
 \rho^*  \pi_* (\calk^{\vee}) \stackrel{\sim}{\arw} \calo_{Y}^{\oplus s+d} = (M_0^{\vee} )_Y, \quad
 \rho^*  \pi_* \big(\calk^{\vee}(-1) \big) \stackrel{\sim}{\arw}  \calo_{Y}^{\oplus d} = (M_{-1}^{\vee} )_Y.
\end{align}
Pulling back (\ref{eq_dual_K}) by $\id_{\P^1} \times \rho$,
composing with the isomorphisms in (\ref{eq_2isom}),
and taking the dual,
we obtain a diagram on $\P^1 \times Y$
\[
\xymatrix{
0 \ar[r] &  (\id_{\P^1} \times \rho)^* \calk \ar[r] \ar[rd] &    (M_0)_{\P^1 \times Y} \ar[r] &  (M_{-1})_{\P^1 \times Y} (1) \ar[r] & 0         \\
  &  & V_{\P^1 \times Y}, & &\\
 }
\]
where $(\id_{\P^1} \times \rho)^* \calk \arw V_{\P^1 \times Y}$ is the pullback of the universal morphism $\calk \arw V_{\P^1 \times R_m}$ by $\id_{\P^1} \times \rho$.
Then there exists a unique lifting $(M_0)_{\P^1 \times Y} \arw V_{\P^1 \times Y} $,
which induces a morphism $u : Y \arw X_m$.
By construction, $u$ is the inverse of $\sigma$.

Thus $R_m$ is smooth
since so are $Y \cong X_m $ and
$\rho: Y \arw R_m  $.
The irreducibility of $R_m$ follows from that of $X_m$.
\end{proof}

\begin{proof}[Proof of Theorem \ref{thm_moduli}]
By Propositions \ref{prop}, \ref{prop_R_m_is_SQM},
it remains to construct a birational map $\tilde{g}_m : R \dashrightarrow R_m$ for each $m \geq \lceil d/s \rceil $.

By construction,
$\iota : \cale \arw V_{\P^1}$ in Remark \ref{rem_g_m} satisfies condition ($\star_m$)  for any $m \geq \lceil d/s \rceil $.
Hence $R_m $ is not empty for any $m \geq \lceil d/s \rceil $.

Let $X_m \subset \overline{X}$ be the open subset in the proof of Proposition \ref{prop_R_m_is_SQM}.
Since $g : X_m  \arw R_m$ in the proof of Proposition \ref{prop_R_m_is_SQM} is surjective,
$X_m $ is nonempty for $m \geq \lceil d/s \rceil $.
Hence $X_m \cap X_d$ is also a nonempty open subset of $\overline{X}$.
This means that both $R_m$ and $R_d$ contain the set
\[
\big\{ [\cale \arw V_{\P^1}] \, \big| \, [\cale \arw V_{\P^1}] \text{ satisfies conditions } (\star_m) \text{ and } (\star_d) \big\}
\]
as a nonempty open subset.
Hence there exists a natural birational map $R_d \dashrightarrow R_m$.
Then the composition of $(g_{d-1},g_d) : R \arw R_d $ and $R_d \dashrightarrow R_m$, which we denote by $\tilde{g}_m$,
is birational by Theorem \ref{R=R_m_by_St} and we finish the proof of Theorem \ref{thm_moduli}.
\end{proof}

\begin{rem}\label{rem_tilde_g_m}
By definition, $\tilde{g}_m$ maps a general point $z \in R$ to $\big[ \cala |_{\P^1 \times \{z\}} \arw V_{\P^1}\big] \in R_m$.
Hence $\tilde{g}_m : R \dashrightarrow R_m \subset G_{m-1} \times G_m$ is nothing but $(g_{m-1},g_m)$.
\end{rem}

\section{Projections to Grassmannians}\label{sec_stratification}

In the previous sections,
we consider Grassmannians of {\it subspace}.
In this section,
we consider Grassmannians of {\it quotient spaces}:
For a vector space $E$,
we denote by $\Gr(E, r)$ the Grassmannian of $r$-dimensional  quotient spaces of $E$.
More generally,
for a coherent sheaf $\mathscr{E}$ on a noetherian scheme $S$,
we set a scheme $\Gr(\mathscr{E},r)$ over $S$ by
\[
\Gr(\mathscr{E},r) := \Quot^{r,\calo_S}_{\mathscr{E}/S/S},
\]
which parametrizes locally free quotient sheaves of $\varphi^* \mathscr{E}$ of rank $r$ for each $\varphi : T \arw S$ (see \cite{Gr}, \cite[5.1.5]{Ni}).
In particular,
the fiber of $\Gr(\mathscr{E},r) \arw S$ over $s \in S$ is the Grassmannian $\Gr(\mathscr{E} \otimes k(s), r)$.
If $\mathscr{E}$ is locally free of rank $n$,
we call $\Gr(\mathscr{E},r) \arw S $ a $\Gr(n,r)$-{\it bundle} over $S$.

\vspace{2mm}
In this section,
we study the projections $\pr_1 : R_m \arw G_{m-1}$ and $\pr_2 : R_m \arw G_m$.
Throughout this section,
we assume that $d \geq 1$.

For each $m \geq 0$,
$j_m : V_{m-1}  \arw V_m \otimes H$ induces a linear map
\[
k_m : V_{m-1} \otimes H^{\vee} \arw V_m .
\]
Then we have a morphism
\[
k_m \circ i'_{m-1}
: \mathscr{K}_{m-1} \otimes H^{\vee} \stackrel{i'_{m-1}}{\longrightarrow}  (V_{m-1})_{G_{m-1}} \otimes H^{\vee} \stackrel{k_m}{\longrightarrow} (V_m)_{G_{m-1}} 
\]
of locally free sheaves on $G_{m-1}$ for $m \geq \lceil d/s \rceil $,
where $i'_{m-1}$ is induced from the natural inclusion $\mathscr{K}_{m-1} \hookrightarrow (V_{m-1})_{G_{m-1}}$.

\begin{lem}\label{prop_Grassmann1}
For $m \geq \lceil d/s \rceil $,
the projection $\pr_1 : R_m \arw G_{m-1}$ is isomorphic to
$ \Gr \left(\coker(k_m \circ i'_{m-1} ), (m+1)r+d \right) \arw G_{m-1}$ over $G_{m-1}$.
\end{lem}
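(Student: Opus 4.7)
The plan is to identify $\pr_1 : R_m \arw G_{m-1}$ with $\Gr(\coker(k_m \circ i'_{m-1}), (m+1)r+d) \arw G_{m-1}$ by matching their universal properties as schemes over $G_{m-1}$. The key mechanism is the tensor--hom adjunction relating $j_m$ and $k_m$, which will translate the defining vanishing condition of $R_m$ into a factorization condition through $\coker(k_m \circ i'_{m-1})$.

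First I would rewrite the equation cutting out $R_m \subset G_{m-1} \times G_m$. By the natural isomorphism $\Hom(E \otimes H^{\vee}, F) \cong \Hom(E, F \otimes H)$, the vanishing of $p_m \circ j_m \circ i_{m-1}$ is equivalent to the vanishing of the adjoint composition
\[
\pr_1^* \mathscr{K}_{m-1} \otimes H^{\vee} \stackrel{i'_{m-1} \otimes \id}{\longrightarrow} (V_{m-1})_{G_{m-1} \times G_m} \otimes H^{\vee} \stackrel{k_m}{\longrightarrow} (V_m)_{G_{m-1} \times G_m} \longrightarrow \pr_2^* \mathscr{Q}_m.
\]
Equivalently, on $R_m$ the universal quotient $(V_m)_{R_m} \twoheadrightarrow \pr_2^* \mathscr{Q}_m$, which is locally free of rank $(m+1)r+d$, factors through the pullback $\pr_1^* \coker(k_m \circ i'_{m-1})$. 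By the universal property of the relative Grassmannian, this factorization determines a morphism $R_m \arw \Gr(\coker(k_m \circ i'_{m-1}), (m+1)r+d)$ over $G_{m-1}$.

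For the inverse direction, a $T$-valued point of $\Gr(\coker(k_m \circ i'_{m-1}), (m+1)r+d)$ over $G_{m-1}$ consists of a morphism $\varphi : T \arw G_{m-1}$ together with a locally free quotient of $\varphi^* \coker(k_m \circ i'_{m-1})$ of rank $(m+1)r+d$. Precomposing with the canonical surjection $(V_m)_T \twoheadrightarrow \varphi^* \coker(k_m \circ i'_{m-1})$ yields a locally free rank-$(m+1)r+d$ quotient of $(V_m)_T$, hence a classifying morphism $\psi : T \arw G_m$. By naturality of the adjunction applied in reverse, the pair $(\varphi,\psi) : T \arw G_{m-1} \times G_m$ annihilates $p_m \circ j_m \circ i_{m-1}$, so factors uniquely through $R_m$. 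The two constructions are manifestly inverse to each other.

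The main (and only modest) obstacle is keeping track of the adjunction $j_m \leftrightarrow k_m$ in the correct direction; beyond that, the argument is a purely formal manipulation of universal properties. As a sanity check on fibers, $\dim V_m = n(m+1) = ((m+1)s-d) + ((m+1)r+d)$, so a rank-$(m+1)r+d$ quotient of the fiber of $\coker(k_m \circ i'_{m-1})$ at $[K_{m-1}]$ is the same datum as a subspace $K_m \subset V_m$ of dimension $(m+1)s-d$ containing the image of $K_{m-1} \otimes H^{\vee}$ under $k_m$, which is precisely the fiber description of $R_m$ above $[K_{m-1}]$.
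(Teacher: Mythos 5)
Your proposal is correct and follows essentially the same route as the paper: both pass to the adjoint form $p'_m \circ k_m \circ i'_{m-1}$ of the defining equation of $R_m$, observe that on $R_m$ the rank-$((m+1)r+d)$ quotient of $(V_m)$ factors through $\pr_1^*\coker(k_m \circ i'_{m-1})$, and invoke the universal property of the relative Grassmannian in both directions to exhibit mutually inverse morphisms. The only difference is cosmetic (you phrase the inverse via $T$-valued points, the paper via the closed embedding $\G_m \hookrightarrow G_{m-1} \times G_m$), so no further comment is needed.
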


\begin{proof}
Set $\G_m= \Gr \left( \coker(k_m \circ i'_{m-1} ), (m+1)r+d \right)$.
By the natural morphism $(V_m)_{G_{m-1}}  \twoheadrightarrow  \coker(k_m \circ i'_{m-1} )$,
we have a closed embedding
\[
\G_m \hookrightarrow  \Gr \big((V_m)_{G_{m-1}} , (m+1)r+d \big) =G_{m-1} \times G_m
\]
over $G_{m-1}$ since $\Gr(V_m, (m+1)r+d) =\Gr((m+1)s-d,V_m)=G_m$.

By Definition \ref{def_R_m},
$R_{m} \subset G_{m-1} \times G_{m}$ is also defined as the closed subscheme of the vanishing of $p'_{m} \circ k_{m} \circ i'_{m-1} $ for
\begin{equation}\label{eq_def_of_Rm_dual}
\begin{gathered}
\xymatrix{
0 \ar[r] &\pr_1^* \mathscr{K}_{m-1} \otimes H^{\vee}  \ar[r]^(0.43){i'_{m-1}}  & (V_{m-1})_{G_{m-1} \times G_{m} }\otimes H^{\vee} \ar[r] \ar[d]^{k_{m}} & \pr_1^* \mathscr{Q}_{m-1} \otimes H^{\vee}\ar[r] & 0 \\
0 \ar[r] & \pr_2^*  \mathscr{K}_{m}  \ar[r]  & (V_{m})_{G_{m-1} \times G_{m} }  \ar[r]^(0.57){p'_{m}}  &  \pr_2^* \mathscr{Q}_{m}  \ar[r] & 0, \\
}
\end{gathered}
\end{equation}
where we use the same notation $i'_{m-1}$ for the pullback of $i'_{m-1}$ on $G_{m-1}$ by $\pr_1$. 
By the construction of the embedding $\G_m \hookrightarrow G_{m-1} \times G_m$,
the restriction of $p'_m$ on $\G_m$ factors through 
\[
(V_{m})_{G_{m-1} \times G_{m} } |_{\G_m} =\pr_1^* (V_m)_{G_{m-1}}  |_{\G_m} \arw \pr_1^*  \coker(k_m \circ i'_{m-1}) |_{\G_m}.
\]
Thus $p'_{m} \circ k_{m} \circ i'_{m-1} $ vanishes on $\G_m$,
that is,
$\G_m$ is a closed subscheme of $R_m$.
 
On the other hand, 
the restriction of $p'_m$ on $R_m$ factors through
\[
(V_{m})_{G_{m-1} \times G_{m} } |_{R_m}  \arw 
\pr_1^*  \coker(k_m \circ i'_{m-1})|_{R_m}
\]
since $p'_{m} \circ k_{m} \circ i'_{m-1} $ vanishes on $R_m$.
Since $\mathscr{Q}_m$ is locally free of rank $(m+1)r+d$,
the induced surjection $\pr_1^*  \coker(k_m \circ i'_{m-1})|_{R_m} \twoheadrightarrow \pr_2^* \mathscr{Q}_m |_{R_m}$
gives a morphism $R_m \arw \G_m$
by the universal property of $\G_m$.

By construction,
$\G_m \hookrightarrow R_m$ and $R_m \arw \G_m$ are inverses of each other.
Hence $R_m$ coincides with $\G_m$ as a closed subscheme of $G_{m-1} \times G_m$,
and this lemma follows.
\end{proof}

On $G_m$,
we have a morphism
\[
p_m \circ j_m : (V_{m-1})_{G_m} \stackrel{j_m}{\longrightarrow} (V_m)_{G_m} \otimes H \stackrel{p_m}{\longrightarrow} \mathscr{Q}_m \otimes H.
\]
Let $j_m^{\vee} \circ p_m^{\vee} : (\mathscr{Q}_m \otimes H)^{\vee} \arw  (V_{m-1})^{\vee}_{G_m}$ be the dual of $p_m \circ j_m$.

\begin{lem}\label{prop_Grassmann2}
For $m \geq \lceil d/s \rceil $,
the projection $\pr_2 : R_m \arw G_{m}$ is isomorphic to
$ \Gr \left(\coker(j_m^{\vee} \circ p_m^{\vee}  ), ms-d \right) \arw G_{m}$ over $G_{m}$.
\end{lem}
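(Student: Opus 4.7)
The plan is to reprise the argument of Lemma~\ref{prop_Grassmann1}, now in the dual direction. Set $\G'_m := \Gr(\coker(j_m^{\vee} \circ p_m^{\vee}), ms-d)$, viewed as a scheme over $G_m$.

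First, I identify $G_{m-1}$ with the Grassmannian $\Gr(V_{m-1}^{\vee}, ms-d)$ of $(ms-d)$-dimensional quotients of $V_{m-1}^{\vee}$ via the bijection sending an $(ms-d)$-dimensional subspace $W' \subset V_{m-1}$ to the quotient $V_{m-1}^{\vee} \twoheadrightarrow {W'}^{\vee}$; under this identification, the tautological rank $(ms-d)$ quotient bundle is the dual $\mathscr{K}_{m-1}^{\vee}$ of the universal subbundle in $(**_{m-1})$. Since $(V_{m-1}^{\vee})_{G_m} \twoheadrightarrow \coker(j_m^{\vee} \circ p_m^{\vee})$ is a surjection of sheaves on $G_m$, the functoriality of Grassmannian bundles yields a closed embedding $\G'_m \hookrightarrow \Gr((V_{m-1}^{\vee})_{G_m}, ms-d) = G_{m-1} \times G_m$ over $G_m$.

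Next, I produce mutually inverse morphisms between $R_m$ and $\G'_m$ as closed subschemes of $G_{m-1} \times G_m$. Dualizing the defining diagram~(\ref{eq_def_of_Rm}), one sees that the composition
\[
\pr_2^{*}(\mathscr{Q}_m \otimes H)^{\vee} \stackrel{(p_m \circ j_m)^{\vee}}{\longrightarrow} (V_{m-1}^{\vee})_{G_{m-1} \times G_m} \twoheadrightarrow \pr_1^{*} \mathscr{K}_{m-1}^{\vee}
\]
is the dual of $p_m \circ j_m \circ i_{m-1}$, and hence vanishes on $R_m$ by Definition~\ref{def_R_m}. Therefore the surjection $(V_{m-1}^{\vee})_{R_m} \twoheadrightarrow \pr_1^{*} \mathscr{K}_{m-1}^{\vee}|_{R_m}$ factors through $\pr_2^{*} \coker(j_m^{\vee} \circ p_m^{\vee})|_{R_m}$, and since $\mathscr{K}_{m-1}^{\vee}$ is locally free of rank $ms-d$, the universal property of $\G'_m$ over $G_m$ produces a morphism $R_m \arw \G'_m$. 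Conversely, on $\G'_m \subset G_{m-1} \times G_m$ the pulled-back surjection $(V_{m-1}^{\vee})_{\G'_m} \twoheadrightarrow \pr_1^{*} \mathscr{K}_{m-1}^{\vee}|_{\G'_m}$ factors through $\coker(j_m^{\vee} \circ p_m^{\vee})|_{\G'_m}$ by construction; dualizing shows that $p_m \circ j_m \circ i_{m-1}$ vanishes on $\G'_m$, so $\G'_m \subseteq R_m$. The two morphisms are mutually inverse by construction, giving $R_m = \G'_m$ as closed subschemes of $G_{m-1} \times G_m$.

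The only subtle point is the bookkeeping of duals: one must verify carefully that $\mathscr{K}_{m-1}^{\vee}$ really is the tautological quotient on $G_{m-1} = \Gr(V_{m-1}^{\vee}, ms-d)$ so that the vanishing condition cutting out $\G'_m$ inside $G_{m-1} \times G_m$ matches exactly the condition of Definition~\ref{def_R_m}. Once this is settled, the remainder is a routine transcription of the proof of Lemma~\ref{prop_Grassmann1} with the arrows reversed.
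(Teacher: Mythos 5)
Your proposal is correct and follows essentially the same route as the paper: the paper observes that $R_m$ is cut out by the vanishing of $i_{m-1}^{\vee}\circ j_m^{\vee}\circ p_m^{\vee}$ in the dualized diagram and then declares the rest "an argument similar to that in the proof of Lemma \ref{prop_Grassmann1}," which is exactly the transcription you carry out, including the identification of $G_{m-1}$ with $\Gr(V_{m-1}^{\vee}, ms-d)$ and of $\mathscr{K}_{m-1}^{\vee}$ with its tautological quotient. Your write-up in fact supplies the details the paper leaves to the reader.
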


\begin{proof}
By the natural morphism $(V_{m-1})^{\vee}_{G_m} \twoheadrightarrow \coker(j_m^{\vee} \circ p_m^{\vee}  )$,
we have an embedding
\[
\Gr \left(\coker(j_m^{\vee} \circ p_m^{\vee})  ,ms-d \right) \hookrightarrow  \Gr \big((V_{m-1})^{\vee}_{G_m} , ms-d \big) =G_{m-1} \times G_m
\]
over $G_{m}$ since $\Gr((V_{m-1})^{\vee},ms-d)= \Gr(ms-d,V_{m-1})=G_{m-1}$.

Since $R_{m} \subset G_{m-1} \times G_{m}$ is also defined as the closed subscheme of the vanishing of $ i^{\vee}_{m-1} \circ j^{\vee}_{m} \circ p^{\vee}_{m}$ for
\[
\xymatrix{
0  &\pr_1^* \mathscr{K}_{m-1}^{\vee}  \ar[l]  & (V_{m-1})^{\vee}_{G_{m-1} \times G_m }  \ar[l]_(0.52){i^{\vee}_{m-1}}& \pr_1^*\mathscr{Q}_{m-1}^{\vee} \ar[l] & 0  \ar[l] \\
0  & (\pr_2^*  \mathscr{K}_m \otimes H )^{\vee}  \ar[l] & (V_{m} \otimes H)^{\vee}_{G_{m-1} \times G_m }\ar[l] \ar[u]_{j^{\vee}_m} &  (\pr_2^* \mathscr{Q}_m \otimes H)^{\vee} \ar[l]_(0.43){p^{\vee}_m}  & 0  \ar[l] \makebox[0pt]{\hspace{1mm},}
\\
}
\]
we can show that $R_m $ coincides with $ \Gr \left(\coker(j_m^{\vee} \circ p_m^{\vee}) ,ms-d \right)$ as a closed subscheme of $G_{m-1} \times G_m$
by an argument similar to that in the proof of Lemma \ref{prop_Grassmann1}.
We leave the detail to the reader.
\end{proof}

\subsection{Stratification of $\pr_2$}

In this subsection,
we consider a stratification of $\pr_2 : R_m \arw G_m$.

\begin{defn}\label{def_strt_G_m-1}
For $m \geq \lceil d/s \rceil $ and $i \in \Z$,
we define a closed subscheme $X_{m}^i $ of $G_{m}$ to be the $mr+d-i$-th degeneracy locus of 
$j_m^{\vee} \circ p_m^{\vee} : (\mathscr{Q}_m \otimes H)^{\vee} \arw  (V_{m-1})^{\vee}_{G_m}$
(see \cite[Chapter 2]{ACGH} for degeneracy loci).
\end{defn}

\begin{lem}\label{lem_X_m^0}
For $m \geq \lceil d/s \rceil $,
$X_m^0$ is reduced, irreducible, Cohen-Macaulay,
and $X_m^0=\pr_2 (R_m)$.
Moreover,
$\pr_2: R_m \arw X_m^0$ is an isomorphism over the non-empty open subset $X_m^0 \setminus X_m^1 $.
In particular,
$X_m^0 \setminus X_m^1 $ is smooth.
\end{lem}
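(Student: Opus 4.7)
The plan is to reduce everything to properties of the degeneracy loci of $\phi := j_m^{\vee} \circ p_m^{\vee}$ via the identification in Lemma \ref{prop_Grassmann2}, which realizes $\pr_2 : R_m \arw G_m$ as $\Gr(\coker\phi, ms-d) \arw G_m$.

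First I will establish $\pr_2(R_m) = X_m^0$ set-theoretically and the fiber behavior. The fiber of $\Gr(\coker\phi, ms-d)$ over $z \in G_m$ is non-empty iff $\dim \coker(\phi)_z \geq ms-d$, which is exactly the condition $\rank(\phi_z) \leq mr+d$ defining $X_m^0$. On the locus where the rank equals $mr+d$, which is the open subset $X_m^0 \setminus X_m^1$, the sheaf $\coker\phi$ is locally free of rank exactly $ms-d$, so the Grassmannian of rank-$(ms-d)$ quotients of a bundle of rank $ms-d$ is the base itself. This yields the isomorphism $\pr_2 : \pr_2^{-1}(X_m^0 \setminus X_m^1) \stackrel{\sim}{\arw} X_m^0 \setminus X_m^1$, and smoothness of this open subset from Proposition \ref{prop_R_m_is_SQM}, once its non-emptiness is established.

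Next comes a dimension count. The expected codimension of the $(mr+d)$-th degeneracy locus of a map between bundles of ranks $2((m+1)r+d)$ and $mn$ is $((m+2)r+d)(ms-d)$, and a routine expansion yields
\[
\dim G_m - ((m+2)r+d)(ms-d) = nd+rs = \dim R_m.
\]
Since $R_m \neq \emptyset$, $X_m^0 = \pr_2(R_m) \neq \emptyset$ as well, and the classical codimension bound for non-empty degeneracy loci gives $\codim X_m^0 \leq ((m+2)r+d)(ms-d)$, hence $\dim X_m^0 \geq \dim R_m$. Combined with the opposite inequality from surjectivity of $\pr_2$, this forces $\dim X_m^0 = \dim R_m$, so $X_m^0$ attains its expected codimension in $G_m$ and $\pr_2$ is birational onto its image. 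Birationality forces the generic fiber to be a single point, so $X_m^0 \setminus X_m^1$ is non-empty and dense in $X_m^0$.

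Cohen-Macaulayness of $X_m^0$ then follows from the classical Hochster-Eagon theorem: a degeneracy locus of expected codimension in a Cohen-Macaulay ambient is itself Cohen-Macaulay (here $G_m$ is smooth). Irreducibility is immediate since $X_m^0$ is the continuous image of the irreducible $R_m$. Reducedness follows from Serre's criterion: $X_m^0$ is $S_1$ (being Cohen-Macaulay) and $R_0$ (because $X_m^0 \setminus X_m^1$ is a dense smooth open subset). The main obstacle, as I see it, is the dimension matching: one must carefully verify the arithmetic identity displayed above and invoke both the upper bound on codimension of degeneracy loci and the Hochster-Eagon regularity theorem, neither of which is immediate but both are standard.
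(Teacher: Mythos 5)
Your proposal is correct in substance and follows the same skeleton as the paper's proof: both identify $\pr_2$ with the bundle $\Gr\left(\coker(j_m^{\vee}\circ p_m^{\vee}), ms-d\right)\arw G_m$ via Lemma \ref{prop_Grassmann2}, read off the set-theoretic image and the isomorphism over $X_m^0\setminus X_m^1$ from the fiberwise rank of the cokernel, obtain Cohen--Macaulayness from the expected-codimension criterion for degeneracy loci (the paper cites \cite[Ch.~II, (4.1) Proposition]{ACGH}, the same circle of ideas as Hochster--Eagon), and get reducedness from Serre's criterion. The one step where you genuinely diverge is how the equality $\dim X_m^0=\dim R_m$ (equivalently, birationality of $\pr_2$ and density of the open stratum) is obtained. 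The paper imports birationality of $\pr_2$ from Lemma \ref{rem_bir} and Remark \ref{rem_tilde_g_m} (since $g_m=\pr_2\circ\tilde{g}_m$ is birational, proved there by exhibiting an explicit split bundle $\cale$ with $\cale(m)$ globally generated), and then the dimension statement is automatic. You instead squeeze the dimension between the classical lower bound for non-empty degeneracy loci and the upper bound from surjectivity; this is self-contained and avoids Lemma \ref{rem_bir}, at the small cost of having to note explicitly that ``generically finite'' upgrades to ``birational'' because the fibers are Grassmannians, hence connected and reduced. Your arithmetic identity $\dim G_m-((m+2)r+d)(ms-d)=nd+rs$ checks out.

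One caveat you should address: the inference ``the generic fiber is a single point, hence the generic point of $X_m^0$ lies outside $X_m^1$'' uses that the fiber $\Gr(ms-d+i,\,ms-d)$ over $X_m^i\setminus X_m^{i+1}$ is positive-dimensional for $i\geq 1$, which holds if and only if $ms>d$. It therefore fails in the boundary case $m=d/s\in\N$, where $ms-d=0$, every fiber of $\Gr(\coker\phi,0)\arw G_m$ is a single point, and $X_m^0=G_m$. (The paper's own phrase ``$\pr_2$ is not finite over $X_m^1$'' has the same blind spot.) In that case the only remaining content of the lemma is $X_m^1\subsetneq G_m$, which needs a separate one-line argument, e.g.\ the explicit point $[\calo_{\P^1}(-d/s)^{\oplus s}\arw V_{\P^1}]$ of Remark \ref{rem_g_m}, whose image in $G_m$ avoids $X_m^1$ because $H^0(\cale(m-1))=0$ there. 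This is easily repaired, but as written your argument (like the paper's) silently skips this case.
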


\begin{proof}
By Theorem \ref{thm_moduli}, Lemma \ref{rem_bir},
$\tilde{g}_m : R \dashrightarrow R_m$ and $g_m : R \dashrightarrow g_m(R) \subset G_m$ are birational.
Since $g_m = \pr_2 \circ \tilde{g}_m $ holds by Remark \ref{rem_tilde_g_m},
$\pr_2 : R_m \arw \pr_2(R_m) \subset G_m$ is birational as well.

We show that $\pr_2(R_m) =X_m^0$ holds set-theoretically.
By the definition of $X_m^i$,
$\coker(j_m^{\vee} \circ p_m^{\vee}  )  |_{X_m^i \setminus X_m^{i+1}}$
is local free of rank
\[
\rank (V_{m-1})^{\vee}_{G_m} -(mr+d-i)=ms-d+i.
\]
Hence $\pr_2(R_m)= X_m^0$ and 
$\pr_2 : R_m \arw G_m$ is a $\Gr(ms-d+i,ms-d)$-bundle over $X_m^i \setminus X_m^{i+1}$ for $i \geq 0$
by Lemma \ref{prop_Grassmann2}.
In particular,
$X_m^0$ is irreducible
and $\pr_2: R_m \arw X_m^0$ is an isomorphism over $X_m^0 \setminus X_m^1 $.
Since $\pr_2: R_m \arw X_m^0$ is birational and $\pr_2 : R_m \arw G_m$ is not finite over $X_m^1$,
$X_m^0 \setminus X_m^1 $ is non-empty

Since $X_m^0 \subset G_m$ is the $ mr+d$-th degeneracy locus of 
$(\mathscr{Q}_m \otimes H)^{\vee} \arw  (V_{m-1})^{\vee}_{G_m}$,
the expected codimension of $X_m^0 $ in $G_m$ is
\[
(\rank (V_{m-1})^{\vee}_{G_m} - (mr+d) )(\rank (\mathscr{Q}_m \otimes H)^{\vee} -(mr+d)).
\]
Since $\dim X^0_m =\dim R_m =nd+rs$,
we can check that the expected codimension coincides with $\dim G_m - \dim X_m^0$.
Hence $X_m^0$ is Cohen Macaulay by \cite[Chapter II, (4.1) Proposition]{ACGH}.

Since $\pr_2: R_m \arw X_m^0$ is an isomorphism over $X_m^0 \setminus X_m^1 $,
$X_m^0 \setminus X_m^1$ is smooth.
Since $X_m^0$ is Cohen-Macaulay and reduced on the non-empty open subset $X_m^0 \setminus X_m^1$,
$X_m^0$ is reduced.
\end{proof}

\begin{prop}\label{stratification_pr_2}
For $m \geq \lceil d/s \rceil $, the following hold.
\begin{enumerate}
\item For $0 \leq i \leq \lfloor d/(m+1) \rfloor$, $X_m^i$ is irreducible, Cohen-Macaulay, $X_m^i \setminus X_m^{i+1}$ is smooth, and $\dim X_m^i= n(d-(m+1)i) + (r+i)(s-i)$.
\item $X_m^i$ is normal for $0 \leq i \leq \lfloor d/(m+1) \rfloor$.
\item $X_m^i=\emptyset$ for $i > \lfloor d/(m+1) \rfloor$.
\item $\pr_2 : R_m \arw G_{m}$ is a $\Gr( ms-d+i, ms-d)$-bundle over $X_m^i \setminus X_m^{i+1}$.
\end{enumerate}
In particular,
$\pr_2 : R_m \arw X_m^0 \subset G_m$ is an isomorphism in codimension one. 
\end{prop}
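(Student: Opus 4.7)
Part~(4) is immediate from Lemma~\ref{prop_Grassmann2}: on $X_m^i \setminus X_m^{i+1}$ the map $j_m^\vee \circ p_m^\vee$ has locally constant rank $mr+d-i$, so $\coker(j_m^\vee \circ p_m^\vee)$ is locally free of rank $ms-d+i$, and $\pr_2$ restricts there to the asserted $\Gr(ms-d+i, ms-d)$-bundle. The smoothness of $X_m^i \setminus X_m^{i+1}$ asserted in~(1) then comes for free from~(4) together with Proposition~\ref{prop_R_m_is_SQM}: the total space of the bundle sits inside the smooth variety $R_m$, and a smooth Grassmann bundle whose total space is smooth has smooth base.

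For the dimension formula in~(1) and for the emptiness in~(3), I would combine the general theory of degeneracy loci with an explicit incidence resolution. A direct algebraic check using $n=r+s$ gives
\[
\dim G_m - (ms-d+i)\bigl((m+2)r+d+i\bigr) = n(d-(m+1)i) + (r+i)(s-i),
\]
identifying the claimed dimension of $X_m^i$ with the expected dimension of the $(mr+d-i)$-th degeneracy locus of $j_m^\vee \circ p_m^\vee$, viewed as a map between bundles of ranks $2((m+1)r+d)$ and $nm$ on $G_m$. To realize this dimension and obtain irreducibility, introduce the incidence variety
\[
\tilde X_m^i = \bigl\{(W,[K]) \in \Gr(ms-d+i, V_{m-1}) \times G_m : j_m(W) \subseteq K \otimes H\bigr\}.
\]
Its projection to $G_m$ has set-theoretic image $X_m^i$ and becomes an isomorphism over the open locus $X_m^i \setminus X_m^{i+1}$, since there $W$ is forced to equal $V_{m-1} \cap K\otimes H$. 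For the other projection, fix a basis of $H$ to split $V_m \otimes H \cong V_m \oplus V_m$; then, for each $W$, the subspace $K_W := \pr_1(j_m(W)) + \pr_2(j_m(W)) \subseteq V_m$ is the minimal $K$ one must contain, and the fiber over $W$ is the Grassmannian of $K \supseteq K_W$ in $G_m$. On a dense open locus of $\Gr(ms-d+i, V_{m-1})$ where $\dim K_W$ attains its generic value, this second projection is a Grassmann fibration, proving $\tilde X_m^i$ is irreducible of the expected dimension. For $i > \lfloor d/(m+1)\rfloor$ the generic value of $\dim K_W$ exceeds $(m+1)s-d$, so no valid $K$ exists and $\tilde X_m^i = \emptyset$, proving~(3). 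Cohen-Macaulayness in~(1) then follows from Hochster--Eagon, applicable precisely because $X_m^i$ attains its expected codimension.

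Part~(2) follows by Serre's criterion: Cohen-Macaulayness provides $S_2$, so only $R_1$ remains; the singular locus of $X_m^i$ lies in $X_m^{i+1}$ by the smoothness of $X_m^i \setminus X_m^{i+1}$, and the dimension formula gives
\[
\dim X_m^i - \dim X_m^{i+1} = n(m+1) + (r-s+2i+1),
\]
which one checks is $\geq 2$ throughout the relevant range (since $n(m+1) \geq 2$ while $r-s \geq -(n-1)$). The final assertion that $\pr_2 : R_m \to X_m^0$ is an isomorphism in codimension one then follows by combining Lemma~\ref{lem_X_m^0} (isomorphism over $X_m^0 \setminus X_m^1$) with the $i=0$ case of the codimension bound just obtained.

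\textbf{The main obstacle} is controlling the generic value of $\dim K_W$ on $\Gr(ms-d+i, V_{m-1})$. This forces one to use the explicit description of $j_m : V_{m-1} \hookrightarrow V_m \otimes H$ as the kernel of the multiplication map $V_m \otimes H \to V_{m+1}$ (equivalently, $v \otimes f \mapsto -v\otimes yf \oplus v\otimes xf$ in a basis), and pinning down the correct generic value---especially near the boundary $i = \lfloor d/(m+1)\rfloor$, where $\tilde X_m^i$ degenerates---is the technical crux underlying both the dimension formula and the emptiness statement.
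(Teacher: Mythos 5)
Your part (4) and your use of Serre's criterion for (2) agree with the paper, but the core of your argument has genuine gaps, and it misses the one idea the paper's proof actually turns on. That idea is the renormalization identity $(m+1)s-d=(m+1)(s-i)-(d-(m+1)i)$: the same Grassmannian $G_m$ receives a map $\pr_2^i$ from the modified space $R_m(s-i,\,d-(m+1)i)$, whose image is $X_m^0(s-i,\,d-(m+1)i)$, and a rank count identifies this with $X_m^i(s,d)$. Everything in (1)--(3) then falls out of Lemma \ref{lem_X_m^0} applied with shifted parameters: irreducibility and the dimension formula because $X_m^0(s-i,\cdot)$ is the birational image of the irreducible smooth $R_m(s-i,\cdot)$ of known dimension; Cohen--Macaulayness from the expected-codimension count already done there; smoothness of $X_m^i\setminus X_m^{i+1}=X_m^0(s-i,\cdot)\setminus X_m^1(s-i,\cdot)$ because $\pr_2^i$ is an isomorphism onto it from an \emph{open} subset of a smooth variety; and (3) because $X_m^{\lfloor d/(m+1)\rfloor+1}$ becomes an $X_m^1$ for parameters with degree at most $m$, where $\pr_2$ is a closed embedding by Theorems \ref{R=R_m_by_St} and \ref{thm_embedding_by_St}. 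Your substitute for the smoothness step fails for $i\geq 1$: the total space $\pr_2^{-1}(X_m^i\setminus X_m^{i+1})$ is only a locally closed subvariety of $R_m$ (it is open only when $i=0$), and ``sits inside a smooth variety'' does not imply smooth --- that is exactly what needs proving.

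The incidence-variety route could in principle replace the renormalization, but as written it is incomplete precisely where it matters. For (3) the logic is inverted: $\dim K_W$ is the rank of a family of linear maps $W\oplus W \arw V_m$, hence is \emph{maximal} for generic $W$ and can drop on special $W$; to conclude $\tilde X_m^i=\emptyset$ you need the minimum of $\dim K_W$ over all $W$ to exceed $(m+1)s-d$, not the generic value, and that minimum is governed by Macaulay-type growth bounds for $xW+yW$ which you do not establish. Likewise, irreducibility and the dimension of $\tilde X_m^i$ require showing that the preimage of the non-generic locus of $W$ has strictly smaller dimension, not merely that the second projection is a Grassmann fibration over a dense open set. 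You yourself flag the computation of the generic (really, the minimal) value of $\dim K_W$ as the ``main obstacle'' and leave it open; since the dimension formula, irreducibility, emptiness, and (via the expected-codimension criterion) Cohen--Macaulayness all hinge on it, the proposal does not yet constitute a proof.
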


\begin{proof}
We note that (4) is already shown in the second paragraph of the proof of Lemma \ref{lem_X_m^0},

First, we show (1).
In this proof, we denote $R, R_m, X_m^i$ by $R(s,d),R_m(s,d), X_m^i(s,d)$ respectively when we need to clarify $s, d$.
Since
\[
(m+1)s-d = (m+1)(s-i) - (d- (m+1)i),
\]
we have a morphism 
\[
\pr_2^i : R_m(s-i, d-(m+1)i) \arw G_m= \Gr((m+1)s-d, V_m ) 
\]
as $\pr_2 :  R_m \arw G_m$ for each $0 \leq i \leq \lfloor d/(m+1) \rfloor$.

Applying Lemma \ref{lem_X_m^0} to $R_m(s-i,d-(m+1)i)$,
the image of $\pr_2^i$ is $X_m^0(s-i, d-(m+1)i) $,
which is the $m(r+i) +(d-(m+1)i)$-th degeneracy locus of
$j_m^{\vee} \circ p_m^{\vee} : (\mathscr{Q}_m \otimes H)^{\vee} \arw  (V_{m-1})^{\vee}_{G_m}$.
Since $m(r+i) +(d-(m+1)i)=mr+d-i$,
$X_m^0(s-i, d-(m+1)i) $ coincides with $X_m^i(s,d)$.
Similarly,
$X_m^1(s-i, d-(m+1)i) =X_m^{i+1}(s,d)$ holds for each $i$.
Applying Lemma \ref{lem_X_m^0} to 
$X_m^0(s-i,d-(m+1)i)$ and $X_m^1(s-i,d-(m+1)i)$,
we obtain (1).

\vspace{2mm}
We show (3).
It suffices to see that
\[
X_m^{ \lfloor d/(m+1) \rfloor +1}(s,d) = X_m^1 (s- \lfloor d/(m+1) \rfloor, d-(m+1) \lfloor d/(m+1) \rfloor )
\]
is empty.
By (4),
$X_m^1 (s- \lfloor d/(m+1) \rfloor, d-(m+1) \lfloor d/(m+1) \rfloor )$ is the image of the exceptional locus of $\pr_2^{ \lfloor d/(m+1) \rfloor}$.
Hence it suffices to show that $\pr_2^{ \lfloor d/(m+1) \rfloor}$ is an embedding.
Since $d - (m+1) \lfloor d/(m+1) \rfloor \leq m$,
\[
R(s- \lfloor d/(m+1) \rfloor, d-(m+1) \lfloor d/(m+1) \rfloor) =R_m(s- \lfloor d/(m+1) \rfloor, d-(m+1) \lfloor d/(m+1) \rfloor)
\]
holds by Theorem \ref{R=R_m_by_St}.
Thus
$\pr_2^{ \lfloor d/(m+1) \rfloor}$ is an embedding by Theorem \ref{thm_embedding_by_St} and (3) is shown.

\vspace{2mm}
To show (2),
it suffices to see that $X_m^0$ is normal
since $X_m^i(s,d)= X_m^0(s-i, d-(m+1)i)$.

By (1), (3),
$X_m^0 \setminus X_m^1$ is smooth and $\dim X_m^1 \leq \dim X_m^0 -2$.
Hence
$X_m^0$ is smooth in codimension one.
Since $X_m^0$ is Cohen-Macaulay,
$X_m^0$ is normal by Serre's criterion.

\vspace{2mm}
We show the last statement.
If $m \geq d$,
$X_m^1= \emptyset$ by (3) since $1 > \lfloor d/(m+1) \rfloor $.
Thus $\pr_2 : R_m \arw X_m^0$ is an isomorphism by (4).

If $ \lceil d/s \rceil \leq m \leq d-1$,
it holds that $ \lfloor d/(m+1) \rfloor \geq 1$, hence we can compute the dimension of the exceptional locus $\pr_2^{-1}(X_m^1) \subset R_m$ as
\begin{align*}
\dim \pr_2^{-1}(X_m^1) &= \max_{1 \leq i \leq \lfloor d/(m+1) \rfloor}  n(d-(m+1)i)+(r+i)(s-i) +  i(ms-d)  \\
&= nd+rs -(m+2)r-d-1 
\end{align*}
by (1), (3), and (4).
Since $d \geq 1$, $\dim \pr_2^{-1}(X_m^1)  \leq \dim R_m -2$ holds.
Hence $\pr_2 : R_m \arw Y_m^0 \subset G_m$ is an isomorphism in codimension one. 
\end{proof}

\subsection{Stratification of $\pr_1$}
In this subsection, we consider $\pr_1$.
As we will see,
$\pr_1 : R_m \arw G_{m-1}$ is birational for $m \geq \lceil d/s \rceil +1$,
and is not for $m=\lceil d/s \rceil $.
Hence we study $\pr_1 : R_m \arw G_{m-1}$ for $m \geq \lceil d/s \rceil +1$,
namely, $\pr_1 : R_{m+1} \arw G_m$ for $m \geq \lceil d/s \rceil $ first.
We study $\pr_1 : R_{\lceil d/s \rceil } \arw G_{\lceil d/s \rceil -1}$ next.

\begin{prop}\label{stratification_of_pr_1}
For $m \geq \lceil d/s \rceil $, it holds that 
\begin{itemize}
\item[(a)]  $\pr_1(R_{m+1})=X_m^0 \subset G_m$,
\item[(b)] $\pr_1 : R_{m+1} \arw X_m^0 $ is a $\Gr((m+2)r+d +i, (m+2)r+d)$-bundle over $X_m^i \setminus X_m^{i+1}$ for each $0 \leq i \leq \lfloor d/(m+1) \rfloor$.
\end{itemize}
In particular,
$\pr_1 : R_{m+1} \arw X_m^0$ is an isomorphism in codimension one for $m \geq \lfloor d/s \rfloor +1 $.
On the other hand,
$\pr_1 : R_{(d/s) +1} \arw X_{d/s}^0 = G_{d/s}$ contracts a divisor if $d/s \in \N$.
\end{prop}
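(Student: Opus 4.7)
For (a), by Remark \ref{rem_tilde_g_m} the rational map $g_m : R \dashrightarrow G_m$ factors both as $\pr_2 \circ \tilde{g}_m$ (via $R_m$) and as $\pr_1 \circ \tilde{g}_{m+1}$ (via $R_{m+1}$). Since $\tilde{g}_m$ and $\tilde{g}_{m+1}$ are birational by Theorem \ref{thm_moduli} and $R_m, R_{m+1}$ are proper, both $\pr_2(R_m)$ and $\pr_1(R_{m+1})$ coincide with the closure of the image of $g_m$ inside $G_m$, so they are equal. Combined with Lemma \ref{lem_X_m^0}, this gives $\pr_1(R_{m+1}) = X_m^0$.

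For (b), apply Lemma \ref{prop_Grassmann1} to identify $\pr_1 : R_{m+1} \to G_m$ with the relative Grassmannian $\Gr(\coker(k_{m+1} \circ i'_m), (m+2)r+d) \to G_m$; it remains to compute $\rank (k_{m+1} \circ i'_m)|_z$ on each stratum $X_m^i \setminus X_m^{i+1}$. Using the identification $X_m^i = X_m^0(s-i, d-(m+1)i)$ from the proof of Proposition \ref{stratification_pr_2}, a point $z$ in the open stratum corresponds to a unique $\cale' \to V_{\P^1}$ of rank $s-i$ and degree $-(d-(m+1)i)$ satisfying $(\star_m)$, with the subspace at $z$ equal to $H^0(\cale'(m)) \subset V_m$. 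Applying Proposition \ref{prop_by_St} functorially to $\cale'(m+1) \hookrightarrow V \otimes \calo_{\P^1}(m+1)$ shows that $(k_{m+1} \circ i'_m)|_z$ factors through $H^0(\cale'(m+1)) \hookrightarrow V_{m+1}$, with the factored map being the adjoint $k_{m+1}^{\cale'} : H^0(\cale'(m)) \otimes H^{\vee} \to H^0(\cale'(m+1))$ of the natural inclusion $j_{m+1}^{\cale'}$ arising from the Beilinson-type resolution of $\cale'(m+1)$. The technical heart of the argument is the surjectivity of $k_{m+1}^{\cale'}$: using Grothendieck's splitting $\cale' \cong \bigoplus_j \calo_{\P^1}(-a_j)$ with all $a_j \leq m$ (forced by $H^1(\cale'(m-1))=0$ in $(\star_m)$), this adjoint decomposes block-diagonally into standard symmetric multiplications $\Sym^{m-a_j} H^{\vee} \otimes H^{\vee} \to \Sym^{m+1-a_j} H^{\vee}$, each of which is surjective precisely because $a_j \leq m$. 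Hence $\rank(k_{m+1} \circ i'_m)|_z = \dim H^0(\cale'(m+1)) = (m+2)s - d - i$ by Riemann--Roch, the cokernel has constant rank $(m+2)r+d+i$ on the reduced stratum $X_m^i \setminus X_m^{i+1}$, and the relative Grassmannian becomes the asserted $\Gr((m+2)r+d+i, (m+2)r+d)$-bundle.

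For the two concluding assertions, combining (b) with Proposition \ref{stratification_pr_2}(1) yields
\[
\dim \pr_1^{-1}(X_m^i \setminus X_m^{i+1}) = nd + rs + i(d-ms-i) = \dim R_{m+1} + i(d - ms - i).
\]
When $m \geq \lfloor d/s \rfloor + 1$ one has $ms \geq d + 1$, so for $i \geq 1$ this quantity is at most $\dim R_{m+1} - 2$, which forces $\pr_1$ to be an isomorphism in codimension one. When $m = d/s \in \N$, instead $ms = d$ gives $i(d-ms-i) = -i^2$, so $\pr_1^{-1}(X_m^1 \setminus X_m^2)$ has codimension exactly one in $R_{m+1}$, and this is the divisor that $\pr_1$ contracts. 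The principal obstacle throughout is the surjectivity step in (b); once it is secured, everything else is dimension arithmetic.
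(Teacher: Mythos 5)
Your part (a) and the concluding dimension counts are correct and agree with the paper's own argument. For (b), however, you take a genuinely different route, and it has a gap at its decisive step. The paper never computes the rank of $k_{m+1}\circ i'_{m}$ pointwise on the strata $X_m^i\setminus X_m^{i+1}$: it introduces a second stratification $Y_m^i\subset G_m$ by the degeneracy loci of $k_{m+1}\circ i'_{m}$ itself, so that the bundle statement over $Y_m^i\setminus Y_m^{i+1}$ is immediate from Lemma~\ref{prop_Grassmann1}, and then proves $X_m^i=Y_m^i$ by observing that $X_m^i(s,d)=X_m^0(s-i,d-(m+1)i)$ and $Y_m^i(s,d)=Y_m^0(s-i,d-(m+1)i)$ are the images of $\pr_2$ and $\pr_1$ from the Quot-type spaces with shifted invariants, hence both equal the closure of $g_m\bigl(R(s-i,d-(m+1)i)\bigr)$.

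Your direct computation could be made to work, but the step that carries the content is not the one you flag. The surjectivity of $H^0(\cale'(m))\otimes H\arw H^0(\cale'(m+1))$ is standard: it follows from $H^1(\cale'(m-1))=0$, i.e.\ condition ii) of $(\star_m)$, with no need for the splitting argument. What you actually need, and assert only with a hooked arrow, is the injectivity of $H^0(\cale'(m+1))\arw V_{m+1}$; without it the image of $(k_{m+1}\circ i'_m)|_z$ could be smaller than $\dim H^0(\cale'(m+1))$ and the cokernel rank would jump. This injectivity does \emph{not} follow from $(\star_m)$, which only controls level $m$: for $n=2$, $r=0$, $d=2$, $m=1$, the map $\iota:\calo_{\P^1}(-1)^{\oplus 2}\arw V_{\P^1}=\calo_{\P^1}^{\oplus 2}$ sending the two summands to $(x,0)$ and $(y,0)$ satisfies $(\star_1)$, yet $\ker\iota\cong\calo_{\P^1}(-2)$ and $H^0(\cale(2))\arw V_2$ has a one-dimensional kernel. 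The injectivity holds only because $z$ lies in the \emph{open} stratum, and unwinding that hypothesis is exactly the missing argument: using $0\arw V_{m-1}\arw V_m\otimes H\arw V_{m+1}\arw 0$ one checks that for $W=H^0(\cale'(m))$ the cokernel of $(j_m^{\vee}\circ p_m^{\vee})|_z$ has dimension $ms-d+i+\kappa$, where $\kappa=\dim\ker\bigl(H^0(\cale'(m+1))\arw V_{m+1}\bigr)$, so that $z\notin X_m^{i+1}$ is equivalent to $\kappa=0$. Supplying this computation (or taking the paper's detour through $Y_m^i$) is necessary before the rank count in (b) is justified; as written, the claim is asserted rather than proved.
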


\begin{proof}
Since $g_{m} $ coincides with $ \pr_1 \circ \tilde{g}_{m+1} : R \dashrightarrow R_{m+1} \arw \pr_1(R_{m+1})  \subset G_m$ and $g_m, \tilde{g}_{m+1}$ are birational,
$\pr_1 : R_{m+1} \arw \pr_1(R_{m+1})  \subset G_{m}$ is birational for $m \geq \lceil d/s \rceil $.

For $i \geq 0$, 
let $Y_m^i  \subset G_m$ be the $(m+2)s-d-i$-th degeneracy locus of
$k_{m+1} \circ i'_{m} : \mathscr{K}_{m} \otimes H^{\vee} \arw (V_{m+1})_{G_{m}} $.
By the same argument as Lemma \ref{lem_X_m^0},
Proposition \ref{stratification_pr_2},
we can show that $\pr_1(R_{m+1})=Y_m^0$ and
\begin{enumerate}
\item[(1)'] For $0 \leq i \leq \lfloor d/(m+1) \rfloor$, $Y_m^i$ is irreducible, Cohen-Macaulay, $Y_m^i \setminus Y_m^{i+1}$ is smooth, and $\dim Y_m^i= n(d-(m+1)i) + (r+i)(s-i)$.
\item[(2)'] $Y_m^i$ is normal for $0 \leq i \leq \lfloor d/(m+1) \rfloor$.
\item[(3)'] $Y_m^i=\emptyset$ for $i > \lfloor d/(m+1) \rfloor$.
\item[(4)'] $\pr_1 : R_{m+1} \arw G_{m}$ is a $\Gr((m+2)r+d +i, (m+2)r+d)$-bundle over $Y_m^i \setminus Y_m^{i+1}$.
\end{enumerate}
Hence it suffices to show $X_m^i= Y_m^i$
for (a), (b).

\vspace{2mm}
Since $g_m = \pr_1 \circ \tilde{g}_{m+1} = \pr_2 \circ \tilde{g}_m : R \dashrightarrow G_m$,
we have $\overline{g_m(R)} = \pr_1(R_{m+1}) = \pr_2(R_m)$,
where $\overline{g_m(R)} $ is the closure of $g_m(R)$.
Since $\pr_1(R_{m+1}) =Y_m^0$ and $\pr_2(R_m) = X_m^0$,
it holds that $X_m^0=Y_m^0$.

For $0 \leq i \leq \lfloor d/(m+1) \rfloor$,
we denote $X_m^i, Y_m^i$ by $X_m^i(s,d), Y_m^i(s,d)$ respectively to clarify $s,d$.
As in the proof of Proposition \ref{stratification_of_pr_1},
it holds that $X_m^i(s,d)=X_m^0(s-i,d-(m+1)i)$.
By a similar argument,
$Y_m^i(s,d)=Y_m^0(s-i,d-(m+1)i)$ holds.
Hence we have $X_m^i(s,d)=X_m^0(s-i,d-(m+1)i)  =Y_m^0(s-i,d-(m+1)i) =Y_m^i(s,d)$.
Thus (a), (b) are proved.

\vspace{2mm}
If $m \geq d$,
$\pr_1 : R_{m+1} \arw Y_m^0$ is an isomorphism by (3)' and (4)'.
For $\lfloor d/s \rfloor +1 \leq m \leq d-1$, 
the dimension of the exceptional locus $\pr_1^{-1}(Y_{m}^1) \subset R_{m+1}$ of $\pr_1$ is
\[
 nd+rs -(sm-d+1) =\dim R_{m+1} -(sm-d+1).
\]
by (1)', (3)', and (4)'.
Hence $\pr_1 : R_{m+1} \arw Y_m^0 $ is an isomorphism in codimension one for $\lfloor d/s \rfloor +1 \leq m \leq d-1$.
On the other hand,
$\pr_1 : R_{(d/s) +1} \arw Y_{d/s}^0$ contracts a divisor if $d/s \in \N$.
In this case,
$Y_{d/s}^0=G_{d/s} $ holds since $\dim Y_{d/s}^0=nd+rs = \dim G_{d/s}$.
\end{proof}


In Definition \ref{def_strt_G_m-1} and the proof of Proposition \ref{stratification_of_pr_1},
we defined $X_m^i =Y_m^i \subset G_m$ for $m \geq \lceil d/s \rceil$.
For $m=\lceil d/s \rceil-1$,
we define $X_{\lceil d/s \rceil-1}^i \subset G_{\lceil d/s \rceil-1}$ in a slightly different manner.

\begin{defn}\label{def_Y^i_2}
Set $l=  \lceil d/s \rceil  s -d$.
For $i \geq 0$,
we define a closed subscheme $X_{\lceil d/s \rceil-1}^i $ of $ G_{\lceil d/s \rceil-1}=\Gr(l, V_{\lceil d/s \rceil-1})$ to be the $2l-i$-th degeneracy locus of 
\[
k_{\lceil d/s \rceil} \circ i'_{\lceil d/s \rceil-1} : \mathscr{K}_{\lceil d/s \rceil-1} \otimes H^{\vee} \arw (V_{\lceil d/s \rceil})_{G_{\lceil d/s \rceil-1}} .
\]
Since $\rank \mathscr{K}_{\lceil d/s \rceil-1} \otimes H^{\vee} =2l$, it holds that $X_{\lceil d/s \rceil-1}^0=G_{\lceil d/s \rceil-1}$.
\end{defn}

\begin{prop}\label{stratification_of_pr_1_fiber}
In the case $m=\lceil d/s \rceil -1$,
the following hold.
\begin{enumerate}
\item For $0 \leq i \leq  l -\big\lceil l/ \lceil d/s\rceil \big\rceil$, $X_{\lceil d/s \rceil-1}^i $ is irreducible, normal, Cohen-Macaulay,
$X_{\lceil d/s \rceil-1}^i  \setminus X_{\lceil d/s \rceil-1}^{i+1} $ is smooth, and $\dim X_{\lceil d/s \rceil-1}^i = n\big( (\lceil d/s\rceil-1)l - \lceil d/s\rceil i \big) + (n-l+i)(l-i)$.
\item $X_{\lceil d/s \rceil-1}^i =\emptyset$ for $i > l -\big\lceil l/ \lceil d/s\rceil \big\rceil$.
\item $\pr_1 : R_{ \lceil d/s \rceil } \arw G_{ \lceil d/s\rceil  -1}$ is a $\Gr \big( (\lceil d/s \rceil +1) n -2l +i,  (\lceil d/s \rceil  +1)r +d \big)$-bundle
over $X_{\lceil d/s \rceil-1}^i  \setminus X_{\lceil d/s \rceil-1}^{i+1}$ for $0 \leq i \leq  l -\big\lceil l/ \lceil d/s\rceil \big\rceil$.
\end{enumerate}
In particular, $\pr_1(R_{ \lceil d/s \rceil })=X_{\lceil d/s \rceil-1}^0 =G_{\lceil d/s\rceil-1}$ holds and
any fiber of $\pr_1 : R_{ \lceil d/s \rceil } \arw G_{ \lceil d/s\rceil  -1}$ is a Grassmannian, which is not a point.
\end{prop}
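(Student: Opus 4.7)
Set $m_0 = \lceil d/s \rceil$ for brevity. The plan is to treat (3) first via Lemma \ref{prop_Grassmann1}, and then to handle (1) and (2) by a change-of-parameters trick reducing both to Proposition \ref{stratification_of_pr_1} applied to modified numerical data.

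For (3), Lemma \ref{prop_Grassmann1} identifies $\pr_1 : R_{m_0} \arw G_{m_0-1}$ with the Grassmannian bundle $\Gr(\coker(k_{m_0} \circ i'_{m_0-1}), (m_0+1)r+d) \arw G_{m_0-1}$. By Definition \ref{def_Y^i_2}, over the locally closed stratum $X_{m_0-1}^i \setminus X_{m_0-1}^{i+1}$ the cokernel of $k_{m_0} \circ i'_{m_0-1}$ is locally free of rank $(m_0+1)n-2l+i$, which immediately yields the stated Grassmannian bundle structure.

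The key step for (1) and (2) is a parameter-change identification: for each $0 \leq i \leq l - \lceil l/m_0 \rceil$, set $s' = l-i$ and $d' = (m_0-1)l - m_0 i$. Then $s' \geq \lceil l/m_0 \rceil \geq 1$, $d' \geq 0$, and $m_0-1 \geq \lceil d'/s' \rceil$ since $(m_0-1)s' - d' = i \geq 0$. A direct check gives $m_0 s' - d' = l$ and $(m_0+1)s' - d' = 2l - i$, so $G_{m_0-1}$ for $(s',d')$ equals $G_{m_0-1}$ for $(s,d)$, and, as subschemes of this common Grassmannian, $X_{m_0-1}^i(s, d)$ coincides with the stratum $Y_{m_0-1}^0(s', d')$ from the proof of Proposition \ref{stratification_of_pr_1}; likewise $X_{m_0-1}^{i+1}(s, d) = Y_{m_0-1}^1(s', d')$. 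Applying Proposition \ref{stratification_of_pr_1}(1)'--(2)' to $(s',d')$ then gives at once the irreducibility, normality, Cohen-Macaulay property, smoothness of the complement, and the dimension formula $nd' + (n-s')s' = n((m_0-1)l - m_0 i) + (n-l+i)(l-i)$ of (1). For (2), at the endpoint $i_0 = l - \lceil l/m_0 \rceil$ the corresponding parameters $(s'_0, d'_0) = (\lceil l/m_0 \rceil,\, m_0\lceil l/m_0 \rceil - l)$ satisfy $0 \leq d'_0 < m_0$, so $\lfloor d'_0/m_0 \rfloor = 0$ and Proposition \ref{stratification_of_pr_1}(3)' forces $X_{m_0-1}^{i_0+1} = Y_{m_0-1}^1(s'_0, d'_0) = \emptyset$; the descending-chain property then yields $X_{m_0-1}^i = \emptyset$ for every $i > i_0$.

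Finally, $X_{m_0-1}^0 = G_{m_0-1}$ is immediate from Definition \ref{def_Y^i_2} (the source bundle has rank $2l$), so $\pr_1(R_{m_0}) = G_{m_0-1}$, and by (3) each fiber is a Grassmannian of positive dimension $((m_0+1)r+d)(s-l+i)$, which is strictly positive because $s > l$: from $m_0 = \lceil d/s \rceil$ and $d \geq 1$ we have $(m_0-1)s < d$, hence $l = m_0 s - d < s$. The main obstacle I expect is the boundary case $d'_0 = 0$ (precisely when $m_0 \mid l$), in which the standing assumption $d \geq 1$ of Section \ref{sec_stratification} fails; there I will need to verify the required analogues of Proposition \ref{stratification_of_pr_1}(1)'--(3)' directly, using that $R_{m_0}(s', 0)$ is simply the Grassmannian $\Gr(n-s', V)$ embedded in $G_{m_0-1}$ via $W \mapsto W \otimes H^0(\calo_{\P^1}(m_0-1))$.
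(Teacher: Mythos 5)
Your proposal is correct and follows essentially the same route as the paper: part (3) via Lemma \ref{prop_Grassmann1} together with the rank of $\coker(k_{\lceil d/s\rceil}\circ i'_{\lceil d/s\rceil-1})$ on each stratum, and parts (1)--(2) by reinterpreting the degeneracy loci of Definition \ref{def_Y^i_2} as the already-studied strata for modified numerical data (the paper makes the single substitution $(\bar{s},\bar{d})=(l,(\lceil d/s\rceil-1)l)$ and invokes Proposition \ref{stratification_pr_2}, whose proof internally performs exactly the per-$i$ reduction $(s',d')=(l-i,(\lceil d/s\rceil-1)l-\lceil d/s\rceil i)$ you carry out directly). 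Your explicit flagging of the degenerate endpoint $d'_0=0$, where the section's standing assumption $d\geq 1$ fails for the auxiliary problem, is a point the paper passes over silently, and your sketch for handling it is sound.
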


\begin{proof}
Since 
\[
\lceil d/s \rceil s -d=l=   \lceil d/s \rceil l -  (\lceil d/s \rceil -1)l,
\]
$G_{ \lceil d/s \rceil -1} =\Gr(l, V_{\lceil d/s \rceil-1})$ does not change by replacing $s,d$ with $\bar{s}:=l, \bar{d}:=(\lceil d/s \rceil-1)l$.
Since $m=\lceil d/s \rceil -1 \geq \lceil \bar{d} / \bar{s} \rceil $,
we can define $X_{\lceil d/s \rceil -1}^i(\bar{s},\bar{d}) \subset G_{ \lceil d/s \rceil -1}$
by Definition \ref{def_strt_G_m-1}.
Since $X_{\lceil d/s \rceil -1}^i(\bar{s},\bar{d}) = Y_{\lceil d/s \rceil -1}^i(\bar{s},\bar{d})$
is the $(\lceil d/s \rceil +1)\bar{s}-\bar{d}-i$-th degeneracy locus of
\[
k_{\lceil d/s \rceil } \circ i'_{\lceil d/s \rceil -1} : \mathscr{K}_{\lceil d/s \rceil -1} \otimes H^{\vee} \arw (V_{\lceil d/s \rceil })_{G_{\lceil d/s \rceil -1}} 
\]
and $(\lceil d/s \rceil +1)\bar{s}-\bar{d}-i =2l-i$,
$X_{\lceil d/s \rceil-1}^i$ in Definition \ref{def_Y^i_2} coincides with $X_{\lceil d/s \rceil -1}^i(\bar{s},\bar{d})$.

Applying (1), (2), (3) in Proposition \ref{stratification_pr_2} to $X_{\lceil d/s \rceil -1}^i(\bar{s},\bar{d})$,
we have (1), (2) in this proposition.

By definition,
the restriction of $\coker (k_{\lceil d/s \rceil } \circ i'_{\lceil d/s \rceil -1} )$ on $X_{\lceil d/s \rceil-1}^i  \setminus X_{\lceil d/s \rceil-1}^{i+1}$
is locally free of rank $(\lceil d/s \rceil +1) n -2l +i$.
Hence (3) follows from Lemma \ref{prop_Grassmann1}.

\vspace{2mm}
For the last statement,
it suffices to show that a general fiber of $ \pr_1$ is not a point.
By (3),
the general fiber of $\pr_1 : R_{ \lceil d/s \rceil } \arw G_{ \lceil d/s\rceil  -1}=X_{\lceil d/s \rceil-1}^0$ is the Grassmannian $\Gr \big( (\lceil d/s \rceil +1) n -2l,  (\lceil d/s \rceil  +1)r +d \big)$,
which is not a point since
\[
(\lceil d/s \rceil +1) n -2l -   (\lceil d/s \rceil +1)r -d  =s-l > 0
\]
and  $(\lceil d/s \rceil +1)r+d >0$.
\end{proof}

\section{Movable and effective cones of $R$}\label{sec_mov}

In this section,
we prove Theorems \ref{main thm1}, \ref{main thm2}.
Throughout this section,
we assume $d \geq 1$ and $0 \leq r \leq n-2$.

Str\o mme defined line bundles $\alpha, \beta$ on $R$ by
$\alpha= c_1(\mathscr{B}_d ) -c_1(\mathscr{B}_{d-1})$ and $\beta = c_1(\mathscr{B}_{d-1})$
for $ \mathscr{B}_m=\pi_* \bigl(\calb(m)\bigr)$.
For $m \geq 0$,
there exists an exact sequence $ 0 \arw \mathscr{B}_{m-1} \arw \mathscr{B}_m^{\oplus 2} \arw \mathscr{B}_{m+1} \arw 0$
induced by $0 \arw \calo_{\P^1}(-1) \arw \calo_{\P^1}^{\oplus 2} \arw \calo_{\P^1}(1) \arw 0$ (see \cite[Lemma 5.1]{St}).
Hence it holds that
$c_1(\mathscr{B}_m) =- (d-1-m)\alpha + \beta $ for $m \geq -1$.

\begin{lem}\label{lem_SQM}
For $\lfloor d/s \rfloor +1 \leq m \leq d-1$,
the birational map $\tilde{g}_m : R \dashrightarrow R_{m}$ is an SQM of $R$.
Under the identification of $N^1(R_m)_{\R}$ with $N^1(R)_{\R}$,
it holds that
\begin{align*}
\Nef(R_m) &= \r+ c_1(\mathscr{B}_{m-1}) + \r+ c_1(\mathscr{B}_{m}) \\
&= \r+ ( - (d-m)\alpha + \beta) + \r+ ( - (d-1-m)\alpha +\beta),
\end{align*}
and $c_1(\mathscr{B}_{m-1})$ and $c_1(\mathscr{B}_{m})  $ are base point free on $R_m$.
\end{lem}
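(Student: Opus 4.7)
The plan is in three parts: (a) verify the SQM property, (b) identify the two candidate nef generators as pullbacks of Plücker classes from $G_{m-1}$ and $G_m$, and (c) show these span the nef cone using $\rho(R_m)=2$.

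For (a) I will use that $R_m$ is smooth projective by Proposition \ref{prop_R_m_is_SQM}, hence normal and $\Q$-factorial, and that $\tilde{g}_m$ is birational by Theorem \ref{thm_moduli}. To show $\tilde{g}_m$ is an isomorphism in codimension one, I will induct downward on $m$, with base case $m = d$ supplied by Theorem \ref{R=R_m_by_St}. The inductive step rests on the rational-map square
\[
\xymatrix{
R \ar@{-->}[r]^{\tilde{g}_{m+1}} \ar@{-->}[d]_{\tilde{g}_m} & R_{m+1} \ar@{->>}[d]^{\pr_1} \\
R_m \ar@{->>}[r]_{\pr_2} & X_m^0 \makebox[0pt]{\hspace{1mm},} \\
}
\]
which commutes because both routes equal $g_m$ (Remark \ref{rem_tilde_g_m}). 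For $m \geq \lfloor d/s \rfloor +1$ both $\pr_2$ (Proposition \ref{stratification_pr_2}) and $\pr_1$ (Proposition \ref{stratification_of_pr_1}) are isomorphisms in codimension one, so the inductive hypothesis on $\tilde{g}_{m+1}$ transfers to $\tilde{g}_m$.

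For (b), the Plücker class $c_1(\mathscr{Q}_m)$ pulls back under $g_m$ to $c_1(\mathscr{B}_m) = -(d-1-m)\alpha + \beta$ since $g_m^* \mathscr{Q}_m = \mathscr{B}_m$. Using the factorizations $g_m = \pr_2 \circ \tilde{g}_m$ and $g_{m-1} = \pr_1 \circ \tilde{g}_m$, the identification $\tilde{g}_m^* : N^1(R_m)_\R \xrightarrow{\sim} N^1(R)_\R$ sends $\pr_2^* c_1(\mathscr{Q}_m) \mapsto c_1(\mathscr{B}_m)$ and $\pr_1^* c_1(\mathscr{Q}_{m-1}) \mapsto c_1(\mathscr{B}_{m-1})$, and both pullbacks are base-point-free on $R_m$ as pullbacks of the base-point-free Plücker classes on Grassmannians.

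For (c), the two classes $c_1(\mathscr{B}_{m-1})$ and $c_1(\mathscr{B}_m)$ are linearly independent in $N^1(R)_\R = \R\alpha \oplus \R\beta$ (their difference is $\alpha$), so they span a two-dimensional sub-cone of $\Nef(R_m)$; equality then follows from $\rho(R_m)=2$ once I check that each class is non-ample, i.e.\ lies on the boundary. For $\pr_2^* c_1(\mathscr{Q}_m)$, since $m \leq d-1$ forces $\lfloor d/(m+1) \rfloor \geq 1$, the locus $X_m^1$ is nonempty and $\pr_2$ contracts positive-dimensional Grassmannian fibers over it by Proposition \ref{stratification_pr_2}. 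For $\pr_1^* c_1(\mathscr{Q}_{m-1})$, the same mechanism via Proposition \ref{stratification_of_pr_1} handles $m \geq \lfloor d/s \rfloor +2$, while for $m = \lfloor d/s \rfloor +1$ the map $\pr_1$ already has positive-dimensional general fibers by Proposition \ref{stratification_of_pr_1_fiber}. The main obstacle is the SQM induction, which hinges crucially on the codimension-one stratification results of Section \ref{sec_stratification}; the remaining steps are routine pullback computations and rank-two Picard bookkeeping.
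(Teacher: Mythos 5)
Your proposal is correct and follows essentially the same route as the paper: the SQM property is obtained via the chain $R_d \dashrightarrow R_{d-1} \dashrightarrow \cdots \dashrightarrow R_m$ through the small contractions to $X_m^0$, and the nef cone via pullbacks of the Pl\"ucker classes, non-finiteness of $\pr_1,\pr_2$, and $\rho(R_m)=2$. The only quibble is that for $m=\lfloor d/s\rfloor+1$ with $d/s\in\N$ the non-finiteness of $\pr_1$ comes from the divisorial contraction in Proposition \ref{stratification_of_pr_1}, not from Proposition \ref{stratification_of_pr_1_fiber} (which in that case concerns $R_{\lceil d/s\rceil}=R_{d/s}$ rather than $R_{(d/s)+1}$), but the conclusion is unaffected.
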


\begin{proof}
Consider the following diagram
\begin{equation}\label{diagram_small_contractions}
\begin{gathered}
\xymatrix{ 
R_{m+1} \ar@{-->}[rr] \ar[rd]_{\pr_1} & &  R_m \ar[ld]^{\pr_2} \\
  & X_m^0 \makebox[0pt]{\hspace{13mm}$\subset  G_m$} & \\
 }
\end{gathered}
\end{equation}
for $\lfloor d/s \rfloor +1 \leq m \leq d-1$.
By Propositions \ref{stratification_pr_2}, \ref{stratification_of_pr_1},
the birational morphism $\pr_1,\pr_2$ in the diagram (\ref{diagram_small_contractions}) are isomorphisms in codimension one.
Hence $R_{m+1} \dashrightarrow R_m$ is also an isomorphism in codimension one.
Since $\tilde{g}_m : R \dashrightarrow R_{m}$ is decomposed as
\[
R \xrightarrow[\tilde{g}_d]{\sim} R_d \dashrightarrow R_{d-1} \dashrightarrow \cdots \dashrightarrow R_{m+1} \dashrightarrow R_m,
\]
$\tilde{g}_m : R \dashrightarrow R_{m}$ is an isomorphism in codimension one as well.
Since $R_m$ is smooth by Proposition \ref{prop_R_m_is_SQM},
$\tilde{g}_m : R \dashrightarrow R_{m}$ is an SQM of $R$.

By Propositions \ref{stratification_pr_2}, \ref{stratification_of_pr_1}, and \ref{stratification_of_pr_1_fiber},
$\pr_1 : R_m \arw G_{m-1}$ and $\pr_2 : R_m \arw G_m$ are not finite morphisms for $\lfloor d/s \rfloor +1 \leq m \leq d-1$.
Hence $\Nef(R_m) $ is spanned by $\pr_1^* c_1(\mathscr{Q}_{m-1})$ and $\pr_2^* c_1(\mathscr{Q}_m)$
since $c_1(\mathscr{Q}_{m-1})$ and $c_1(\mathscr{Q}_m)$ are ample line bundles on the Grassmannians $G_{m-1} $ and $G_m$ respectively.
By the definition of $g_m : R \dashrightarrow G_m$,
$c_1(\mathscr{B}_m) $ is the pullback of $c_1(\mathscr{Q}_m)$ by $g_m$
(note that $g_m$ is defined outside codimension two locus since so is $\tilde{g}_m : R \dashrightarrow R_m$).
Hence, $\pr_2^*c_1(\mathscr{Q}_m)  =c_1(\mathscr{B}_m) $ holds under the identification of $N^1(R_m)_{\R}$ with $N^1(R)_{\R}$. 
Similarly,
$\pr_1^* c_1(\mathscr{Q}_{m-1}) =c_1(\mathscr{B}_{m-1}) $ holds.
Hence $\Nef(R_m)$ is spanned by
the two base point free line bundles $c_1(\mathscr{B}_{m-1})$ and $c_1(\mathscr{B}_{m}) $. 
\end{proof}

\begin{rem}\label{rem_small_contraction}
By Lemma \ref{lem_SQM},
the Picard number of $R_m$ is two for $\lfloor d/s \rfloor +1 \leq m \leq d-1$.
Hence $\pr_1$ and $\pr_2$ in the diagram (\ref{diagram_small_contractions}) are small contractions for $\lfloor d/s \rfloor +1 \leq m \leq d-1$
since $X_m^0$ is normal by Proposition \ref{stratification_pr_2}. 
\end{rem}

\begin{lem}\label{lem_edge_Mov_Eff}
The morphism $\pr_1 : R_{\lfloor d/s \rfloor+1}  \arw G_{\lfloor d/s \rfloor}$ is a fiber type contraction (resp.\ a divisorial contraction)
if $d/s \not \in \N$ (resp.\ $d/s \in \N$).
Furthermore,
$\r+ c_1(\mathscr{B}_{\lfloor d/s \rfloor})$ is an edge of $\Mov(R)$,
and $\r+ c_1(\mathscr{B}_{\lceil d/s \rceil-1})$ is an edge of $\Eff(R)$.
\end{lem}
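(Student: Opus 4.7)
Set $m_0 := \lfloor d/s \rfloor + 1$. If $d/s \notin \N$ then $m_0 - 1 = \lceil d/s \rceil - 1$, and every fiber of $\pr_1 : R_{m_0} \arw G_{m_0-1}$ is a positive-dimensional Grassmannian by the last sentence of Proposition \ref{stratification_of_pr_1_fiber}, so $\pr_1$ is a fiber type contraction. If $d/s \in \N$ then $m_0 - 1 = d/s$, and the last sentence of Proposition \ref{stratification_of_pr_1} says $\pr_1$ contracts a divisor, so $\pr_1$ is divisorial. This disposes of the first claim.

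For the $\Mov(R)$ edge, the plan is a moving-curve / base-locus argument on $R_{m_0}$. As in the proof of Lemma \ref{lem_SQM}, the class $c_1(\mathscr{B}_{m_0-1})$ equals $\pr_1^* c_1(\mathscr{Q}_{m_0-1})$ under $\tilde{g}_{m_0}^*$ and is base-point-free on $R_{m_0}$, hence movable on $R$. For extremality I would pick a curve $C \subset R_{m_0}$ contracted by $\pr_1$: a general fiber curve in the fiber type case (then $C$ moves through a general point of $R_{m_0}$), or a curve inside the exceptional divisor $E$ in the divisorial case. Then $c_1(\mathscr{B}_{m_0-1}) \cdot C = 0$ while $c_1(\mathscr{B}_{m_0}) \cdot C > 0$, the latter because the embedding $R_{m_0} \hookrightarrow G_{m_0-1} \times G_{m_0}$ prevents $C$ from being contracted by $\pr_2$ as well. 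For any $\gamma = c_1(\mathscr{B}_{m_0-1}) - t\, c_1(\mathscr{B}_{m_0})$ with $t > 0$ one obtains $\gamma \cdot C < 0$: in the fiber type case this rules out pseudo-effectivity and so movability; in the divisorial case it forces $E$ into the support of every effective representative of $\gamma$, making $E$ a divisorial component of the stable base locus and again blocking movability. Either way $\gamma \notin \Mov(R)$, so $\r+ c_1(\mathscr{B}_{\lfloor d/s \rfloor})$ is an edge.

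For the $\Eff(R)$ edge, when $d/s \notin \N$ the ray coincides with the $\Mov$ edge, and the moving-curve argument above (with $C$ genuinely moving in the fiber type case) already gives $\gamma \cdot C < 0$ for $\gamma$ beyond the ray, killing pseudo-effectivity. When $d/s \in \N$ I would identify the reduced exceptional divisor $E$ of $\pr_1$, which by Proposition \ref{stratification_of_pr_1} equals $\pr_1^{-1}(Y_{d/s}^1)$, with an irreducible effective divisor whose class in $N^1(R_{m_0})_{\R}$ is a positive multiple of $c_1(\mathscr{B}_{m_0-2}) = c_1(\mathscr{B}_{\lceil d/s \rceil - 1})$. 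This identification would come from a Porteous-type Chern-class computation on the Grassmannian-bundle description of $\pr_1$ supplied by Lemma \ref{prop_Grassmann1}. Effectivity of $[E]$ then places $\r+ c_1(\mathscr{B}_{\lceil d/s \rceil - 1})$ inside $\Eff(R)$, and pairing any class beyond the ray against a moving curve in $R_{m_0}$ (for instance the lift through $\pr_1$ of a general line in $G_{m_0-1}$) yields a negative intersection, eliminating pseudo-effectivity. The main obstacle is precisely this last Chern-class identification in the divisorial case; every other statement reduces to a standard moving-curve intersection argument once the degeneracy-locus and Grassmannian-bundle descriptions of Section \ref{sec_stratification} are in hand.
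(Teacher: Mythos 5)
Your treatment of the contraction types and of the $\Mov(R)$ edge is sound and is essentially the paper's argument made explicit: since $\rho(R_{\lfloor d/s\rfloor+1})=2$, the covering family of curves contracted by $\pr_1$ forces $\r+ c_1(\mathscr{B}_{\lfloor d/s\rfloor})=\r+ \pr_1^*c_1(\mathscr{Q}_{\lfloor d/s\rfloor})$ to be an edge of $\Mov$, and in the fiber-type case ($d/s\not\in\N$, where $\lceil d/s\rceil-1=\lfloor d/s\rfloor$) also an edge of $\Eff$. That part is complete.

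The genuine gap is the one you flag yourself: in the divisorial case you never actually identify the class of the exceptional divisor $E$ with a positive multiple of $c_1(\mathscr{B}_{\lceil d/s\rceil-1})$, and without that identification the location of the $\Eff(R)$ edge is not established. Your proposed route via a Porteous-type computation is also harder than you suggest: $E=\pr_1^{-1}(Y_{d/s}^1)$ is the preimage of a degeneracy locus of codimension greater than one in $G_{d/s}$, over which $\pr_1$ is a nontrivial Grassmannian bundle, so $[E]$ is not the pullback of a degeneracy-locus class and a naive Porteous formula on $G_{d/s}$ does not apply. The paper sidesteps all of this with a discrepancy computation: since $\pr_1:R_{(d/s)+1}\arw G_{d/s}$ is a divisorial contraction between smooth varieties, $K_{R_{(d/s)+1}}-\pr_1^*K_{G_{d/s}}$ is a positive multiple of $E$, and both sides are known in the basis $\alpha,\beta$ (using Str\o mme's formula $K_R=-(n+(2r+2-n)d)\alpha-(n-2r)\beta$ and $K_{G_{d/s}}=-n((d/s)+1)c_1(\mathscr{Q}_{d/s})$), giving $((nd/s)+2r)\,c_1(\mathscr{B}_{(d/s)-1})$. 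You should replace the Porteous step with this canonical-bundle comparison.
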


\begin{proof}
First,
assume $d/s \not \in \N$.
Since the Picard number of $R_{\lfloor d/s \rfloor+1} $ is two,
$\pr_1 : R_{\lceil d/s \rceil}  \arw G_{\lceil d/s \rceil-1  }$ is a fiber type contraction by Propositions \ref{stratification_of_pr_1_fiber}.
Thus $\pr_1^* c_1(\mathscr{Q}_{\lceil d/s \rceil-1}) =c_1(\mathscr{B}_{\lceil d/s \rceil-1})$ spans an edge of both $\Mov(R_{\lceil d/s \rceil})$ and $\Eff(R_{\lceil d/s \rceil})$.
Since $\lceil d/s \rceil-1 =\lfloor d/s \rfloor$ and $R_{\lceil d/s \rceil}=R_{\lfloor d/s \rfloor+1}$ is an SQM of $R$,
this lemma holds if $d/s \not \in \N$.

\vspace{2mm}
Next, assume $d/s \in \N$.
Since the Picard number of $R_{\lfloor d/s \rfloor+1} $ is two,
$\pr_1 : R_{(d/s) +1}  \arw G_{d/s  }$ is a divisorial contraction by Propositions \ref{stratification_of_pr_1}.
Hence $\pr_1^* c_1(\mathscr{Q}_{d/s }) =c_1(\mathscr{B}_{d/s})$ spans an edge of $\Mov(R_{(d/s) +1})$
and $E$ spans an edge of $\Eff(R_{(d/s) +1})$, where $E$ is the contracted divisor of $\pr_1 : R_{(d/s) +1}  \arw G_{d/s  }$.
To compute the class of $E$,
we compare the canonical divisors on $R_{ (d/s) +1}$ and $ G_{ d/s } $.
By \cite[Theorem 7.1 (ii)]{St},
it holds that $K_R= - (n+(2r+2-n)d)\alpha-(n-2r)\beta $.
On the other hand,
$K_{ G_{ d/s } } = - n((d/s)+1) c_1(\mathscr{Q}_{d/s}) $ holds since $ G_{d/s }  $ is a Grassmannian.
Hence we have
\begin{align*}
K_{R_{(d/s)+1}} - \pr_1^* K_{ G_{ d/s } } &= - (n+(2r+2-n)d)\alpha  -(n-2r)\beta + n((d/s)+1) c_1(\mathscr{B}_{d/s})\\
&= ((nd/s) +2r ) c_1(\mathscr{B}_{(d/s)-1} ).
\end{align*}
Thus the class of $E$ is a positive multiple of $c_1(\mathscr{B}_{(d/s)-1} ) =c_1(\mathscr{B}_{\lceil d/s \rceil -1} )$
and this class spans an edge of $\Eff(R_{(d/s) +1})=\Eff(R)$.
\end{proof}

To find the other edges of $\Mov(R)$ and $\Eff(R)$,
we recall the morphism defined by $\alpha = c_1(\mathscr{B}_d) -c_1(\mathscr{B}_{d-1})$
(see \cite[Lemma 6.4]{St}).
Taking the $s$-th exterior power of the universal inclusion $\cala \hookrightarrow V_{\P^1 \times R}$,
we have $\det \cala  \arw \bigwedge^{s} V_{\P^1 \times R}$.
By tensoring $ \calo_{\P^1}(d)$ and taking $\pi_*$,
we obtain a nowhere vanishing monomorphism
\[
\calo_{R}(-\alpha) \arw \bigwedge^s V_R \otimes H^0(\calo_{\P^1}(d)).
\]
This induces a morphism
$f : R \arw \P\big(\bigwedge^r V \otimes H^0(\calo_{\P^1}(d))^{\vee}\big)$.

\begin{defn}\label{def_quantum_Gr}
We define $K^d_{s,r} \subset  \P\big(\bigwedge^r V \otimes H^0(\calo_{\P^1}(d))^{\vee}\big)$ to be the image $ f(R)$ with the reduced structure.
The subvariety $K^d_{s,r}$ is called a quantum Grassmannian,
and studied in \cite{Ro}, \cite{SS}, etc.
\end{defn}

\begin{lem}\label{lem_normal&CM}
The quantum Grassmannian $K^d_{s,r}$ is normal and Cohen-Macaulay.
\end{lem}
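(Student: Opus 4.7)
My plan is to split into two regimes based on the value of $r$. For the extremal cases $r = 0$ and $r = 1$, I would directly identify $K^d_{s,r}$ with the ambient projective space, using the type of contraction of $f$ recorded in Theorem \ref{main thm1}(3). When $r = 0$ we have $\bigwedge^r V = k$, so $f : R \to \P(H^0(\calo_{\P^1}(d))^{\vee}) \cong \P^d$ is a fiber type contraction onto an irreducible variety of dimension $d$, forcing $K^d_{s,0} = \P^d$. When $r = 1$, $f$ is a divisorial (hence birational) contraction, and the count $\dim R = nd + (n-1) = n(d+1)-1 = \dim \P(V \otimes H^0(\calo_{\P^1}(d))^{\vee})$ forces $K^d_{s,1}$ to fill the ambient space. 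In both subcases $K^d_{s,r}$ is a projective space, hence smooth, and the lemma is immediate.

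For the main range $2 \leq r \leq n-2$, my plan is to appeal to the existing literature on quantum Grassmannians. By construction, $K^d_{s,r}$ sits inside $\P(\bigwedge^r V \otimes H^0(\calo_{\P^1}(d))^{\vee})$ as the closure of those morphisms $\P^1 \to \G$ composed with the Pl\"ucker embedding $\G \hookrightarrow \P(\bigwedge^r V)$, so its homogeneous coordinate ring is the one generated by "quantum Pl\"ucker coordinates" subject to the quantum Pl\"ucker relations. The work in \cite{Ro} and \cite{SS} shows that this ring is a Cohen-Macaulay normal domain in any characteristic; a typical proof degenerates it via a Gr\"obner argument to a Stanley-Reisner ring of a shellable simplicial complex, and uses a standard monomial theory argument to obtain normality. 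Once our description $K^d_{s,r} = \overline{f(R)}$ is matched with the one used there, the lemma follows.

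The main obstacle will lie in the second regime: the intrinsic tools developed in Sections \ref{sec_def_Rm}--\ref{sec_stratification} produce normal and Cohen-Macaulay conclusions by exhibiting varieties as \emph{degeneracy loci in Grassmannians} (the $X_m^0$), but $K^d_{s,r}$ naturally lives in a projective space, and the projections $\pr_1, \pr_2$ studied earlier do not factor through $K^d_{s,r}$. Hence one cannot reduce the claim directly to Proposition \ref{stratification_pr_2}. If one wishes to avoid citing \cite{Ro}, \cite{SS}, the alternative would be either to give a new degeneracy-locus description of $K^d_{s,r}$ in a suitable Grassmannian, or, in characteristic zero only, to compute $R^i f_* \calo_R$ using smoothness of $R$ and deduce that $K^d_{s,r}$ has rational (hence normal and Cohen-Macaulay) singularities; in arbitrary characteristic the cleanest route remains the quantum-Pl\"ucker determinantal description.
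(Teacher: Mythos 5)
Your proposal is correct and, in the main range, coincides with the paper's own one-line proof: the paper simply invokes \cite[Corollary 17]{SS}, which states that the homogeneous coordinate ring of $K^d_{s,r}$ is normal and Cohen--Macaulay, and this is exactly your appeal to the quantum-Pl\"ucker/sagbi results of \cite{SS}. Your separate treatment of $r=0,1$ (where $K^d_{s,r}$ is a projective space) is harmless but unnecessary, since the citation covers all cases at once.
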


\begin{proof}
By \cite[Corollary 17]{SS},
the coordinate ring of $K^d_{s,r} \subset  \P\big(\bigwedge^r V \otimes H^0(\calo_{\P^1}(d))^{\vee}\big)$ is normal and Cohen-Macaulay.
Hence this lemma follows.
\end{proof}

\begin{lem}\label{lem_r=0}
For $r=0$,
$f$ is surjective, i.e., $ K^d_{n,0}= \P \big( H^0(\calo_{\P^1}(d))^{\vee} \big)$ holds,
and $f : R \twoheadrightarrow   \P \big( H^0(\calo_{\P^1}(d))^{\vee} \big)$ is a fiber type contraction.
In particular,
$\r+ \alpha$ is an edge of both $\Mov(R)$ and $\Eff(R)$.
\end{lem}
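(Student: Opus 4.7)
The plan is to realize $f$ explicitly as a relative Hilbert--Chow morphism and then deduce everything from a dimension count together with a covering family of curves in the fibers. For $r = 0$ we have $s = n$, and taking the $s$-th exterior power of the universal inclusion $\cala \hookrightarrow V_{\P^1 \times R}$ produces an injection $\det \cala \hookrightarrow \bigwedge^n V_{\P^1 \times R} \cong \calo_{\P^1 \times R}$. Fiberwise this exhibits $\det \cala|_{\P^1 \times \{z\}} \cong \calo_{\P^1}(-d)$ as the ideal sheaf of the degree-$d$ effective divisor $\sum_p \ell(T_p)\, p$ on $\P^1$, for $z = [V_{\P^1} \twoheadrightarrow T] \in R$. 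Hence $f$ coincides with the map $R \arw \Sym^d \P^1 = \P(H^0(\calo_{\P^1}(d))^{\vee})$ sending $z$ to this divisor.

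With this identification, surjectivity is transparent: given any effective divisor $D$ of degree $d$ on $\P^1$, choose a basis of $V$ and form $\cala = \calo_{\P^1}^{\oplus(n-1)} \oplus \calo_{\P^1}(-D) \hookrightarrow V_{\P^1} = \calo_{\P^1}^{\oplus n}$ via the identity on the first $n-1$ summands and the ideal embedding $\calo_{\P^1}(-D) \hookrightarrow \calo_{\P^1}$ on the last; the corresponding point of $R$ lies in $f^{-1}(D)$. Since $\dim R = nd + r(n-r) = nd$ while $\dim \P(H^0(\calo_{\P^1}(d))^{\vee}) = d$, the general fiber has dimension $(n-1)d$, which is at least $1$ since $n \geq 2$ (from $r \leq n-2$) and $d \geq 1$. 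Thus $f$ is surjective and of fiber type, which yields $K^d_{n,0} = \P(H^0(\calo_{\P^1}(d))^{\vee})$ and establishes the first two claims.

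To show that $\r+ \alpha$ is an edge of $\Mov(R)$ and $\Eff(R)$, I would pick an irreducible curve $C$ inside a general fiber of $f$. The family of such curves covers $R$, so $D \cdot C \geq 0$ for every effective, and in particular every movable, divisor $D$. Because $\alpha = f^{*} \calo(1)$ gives $\alpha \cdot C = 0$, the class $\alpha$ sits on the supporting hyperplane $C^{\perp}$ of $\Eff(R)$; combined with $\alpha \in \Nef(R) \subset \Mov(R) \subset \Eff(R)$ and $\dim N^1(R)_{\R} = 2$, this forces $\r+ \alpha$ to be an edge of both cones.

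The only step requiring real care is verifying that the construction of $f$ via $\det \cala$ and pushforward along $\pi$ genuinely agrees with the Hilbert--Chow morphism at the level of geometric (and hence $T$-valued) points; once this identification is in hand, surjectivity, the fiber type property, and the extremality of $\r+ \alpha$ are all immediate consequences.
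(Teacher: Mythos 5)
Your identification of $f$ with the support-with-multiplicities (Hilbert--Chow type) map, your explicit construction of a point in each fiber to get surjectivity, and your curve-in-a-fiber argument for the extremality of $\r+\alpha$ are all sound and consistent with the paper (which records the same pointwise description of $f$). However, there is a genuine gap: you never establish that $f$ is a \emph{contraction}, i.e.\ that $f_*\calo_R = \calo_{\P(H^0(\calo_{\P^1}(d))^{\vee})}$. Surjectivity plus a positive-dimensional general fiber only gives ``surjective of fiber type''; to get a contraction you must rule out the possibility that the Stein factorization $R \arw Y \arw \P\big(H^0(\calo_{\P^1}(d))^{\vee}\big)$ has $Y \arw \P\big(H^0(\calo_{\P^1}(d))^{\vee}\big)$ finite of degree $>1$, which amounts to showing the general fiber of $f$ is connected. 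Your dimension count does not see this. This is precisely where the paper spends essentially all of its effort: it restricts to the open set $U$ of \emph{reduced} divisors $D$, observes that any $[V_{\P^1}\arw B]$ over such $D$ has $B\cong\calo_D$, and builds an explicit isomorphism of auxiliary bundles over $f^{-1}(U)$ and $U$ showing $f$ is smooth with irreducible fibers over $U$; combined with normality of the target this yields $f_*\calo_R=\calo$.

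The fix is short and fits naturally into your framework: over a reduced $D=p_1+\cdots+p_d$ the fiber of $f$ is the set of surjections $V_{\P^1}\twoheadrightarrow\bigoplus_i k(p_i)$ up to equivalence, i.e.\ $\prod_{i=1}^d \P(\Hom(V,k(p_i))^{\vee})\cong(\P^{n-1})^d$, which is irreducible of dimension $(n-1)d$ (matching your count); then Stein factorization and normality of $\P\big(H^0(\calo_{\P^1}(d))^{\vee}\big)$ give the contraction property. Two smaller remarks: in your last paragraph you flag the agreement of $f$ with the Hilbert--Chow map as the delicate point, but that agreement is immediate from the definition of $f$ via $\det\cala$ (the zero divisor of the induced section of $\calo_{\P^1}(d)$ is exactly the support cycle of the cokernel); the delicate point is the connectedness above. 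And in the edge argument you should note that all $f$-contracted curves are numerically proportional (they lie on the hyperplane $\alpha^{\perp}$ in the two-dimensional $N_1(R)_{\R}$ and pair nonnegatively with the nef class $\beta$), so that the inequality $D\cdot C\geq 0$ for one well-chosen contracted curve not contained in $\Supp D$ suffices; alternatively, simply observe that $\alpha=f^*\calo(1)$ is nef but not big, hence lies on the boundary of $\Eff(R)$.
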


\begin{proof}
By the assumption $0 = r \leq n-2$,
we have $n \geq 2$.
Hence $\dim R =nd > \dim \P \big( H^0(\calo_{\P^1}(d))^{\vee} \big)=d$ holds.
Thus to prove this lemma,
it is enough to show that $f$ is a contraction,
that is,
$f_* \calo_{R}=\calo_{\P( H^0(\calo_{\P^1}(d))^{\vee})}$ holds. 
To show $f_* \calo_{R}=\calo_{\P( H^0(\calo_{\P^1}(d))^{\vee})}$,
it suffices to see that there exists an open subset $U \subset \P \big( H^0(\calo_{\P^1}(d))^{\vee} \big)$
such that $f$ is smooth with irreducible fibers over $U$
since $ \P \big( H^0(\calo_{\P^1}(d))^{\vee} \big)$ is normal.

Set $U=\big\{ [D] \in |\calo_{\P^1}(d)| \, \big| \, D \text{ is reduced} \big\} \subset |\calo_{\P^1}(d)| =\P \big( H^0(\calo_{\P^1}(d))^{\vee} \big)$.
Let $\cald \subset \P^1 \times U$ be the universal divisor,
that is, 
$\cald |_{\P^1 \times \{[D]\}} =D$ holds for each $[D] \in U$.
Then $(\pi_U)_* \calh om (V_{\P^1 \times U}, \calo_{\cald})$ is a locally free sheaf on $|\calo_{\P^1}(d)|$ of rank $nd$.
Hence we obtain the vector bundle
\[
\V_{U} \big((\pi_U)_* \calh om (V_{\P^1 \times U}, \calo_{\cald}) ^{\vee}\big) \arw U,
\]
whose fiber over $[D] \in U$ is the affine space $\Hom (V_{\P^1}, \calo_D) \cong \A^{nd}$.
We write a point in $\V_{U} \big((\pi_U)_* \calh om (V_{\P^1 \times U}, \calo_{\cald}) ^{\vee}\big)$ as $([D], [V_{\P^1} \arw \calo_D])$.
Set an open subset $\widetilde{U}$ by
\[
\widetilde{U} := \big\{ ([D], [V_{\P^1} \arw \calo_D]) \, | \, V_{\P^1} \arw \calo_D \text{ is surjective} \big\} \subset \V_{U} \big((\pi_U)_* \calh om (V_{\P^1 \times U}, \calo_{\cald}) ^{\vee}\big).
\]

\vspace{2mm}
By definition,
$f$ maps a point $[V_{\P^1} \arw B]  \in R$ to
\[
\sum_{P \in \P^1} \big(\mathrm{length}_{k(P)} B \otimes k(P)\big) P \in |\calo_{\P^1}(d)|=\P \big( H^0(\calo_{\P^1}(d))^{\vee}\big).
\]
We note that $B$ is a torsion sheaf since $\rank B= r=0$.
Hence for $[D] \in U$ and $[V_{\P^1} \arw B] \in f^{-1}([D])$,
it holds that $B \cong \calo_D$ since $D$ is reduced.
Thus $(\pi_{f^{-1}(U)})_* \calh om (\calb, (\id_{\P^1} \times f )^* \calo_{\cald})$ is a locally free sheaf on $f^{-1}(U)$ of rank $d$.
Hence we obtain
\[
\V_{f^{-1}(U)} \big((\pi_{f^{-1}(U)})_* \calh om (\calb, (\id_{\P^1} \times f )^* \calo_{\cald})^{\vee} \big) \arw f^{-1}(U),
\]
whose fiber over $[V_{\P^1} \arw B]  \in f^{-1}(U)$ is the affine space $\Hom (B, \calo_D) \cong \A^{d}$,
where $[D]=f([V_{\P^1} \arw B] )$.
Set an open subset $\widetilde{f^{-1}(U)}$ by
\begin{align*}
\lefteqn{\widetilde{f^{-1}(U)} 
 =\big\{ ([V_{\P^1} \arw B], [B \arw \calo_D])  \, | \, B \arw \calo_D \text{ is an isomorphism} \big\}} \hspace{45mm}  \\
 &\subset \V_{f^{-1}(U)} \big((\pi_{f^{-1}(U)})_* \calh om (\calb, (\id_{\P^1} \times f )^* \calo_{\cald})^{\vee}\big).
\end{align*}

\begin{clm}\label{claim_isom}
There exists an isomorphism $\tilde{f} : \widetilde{f^{-1}(U)} \arw \widetilde{U}$ such that
\[
\xymatrix{ 
\widetilde{f^{-1}(U)} \ar[r]^(.6){\tilde{f}} \ar[d] &  \widetilde{U} \ar[d] \\
 f^{-1}(U) \ar[r]^(.6)f & U \\
 }
\]
is commutative.
\end{clm}

\begin{proof}[Proof of Claim \ref{claim_isom}]
We define $\tilde{f}$ by mapping $([V_{\P^1} \stackrel{q}{\arw}B], [B \stackrel{i}{\arw} \calo_D]) \in \widetilde{f^{-1}(U)}$ to
$([D], [V_{\P^1} \stackrel{q}{\arw} B \stackrel{i}{\arw} \calo_D]) \in \widetilde{U}$.
The converse $\widetilde{U} \arw \widetilde{f^{-1}(U)}$ is defined by mapping $ ([D], [V_{\P^1} \stackrel{\bar{q}}{\arw} \calo_D]) \in \widetilde{U}$ to
\[
([V_{\P^1} \stackrel{\bar{q}}{\arw} \calo_D ], [\calo_D \stackrel{\id}{\arw} \calo_D]) \in \widetilde{f^{-1}(U)}.
\]
\end{proof}

Since $\widetilde{f^{-1}(U)} \arw f^{-1}(U)$ and $\widetilde{U} \arw U$ are smooth morphisms with irreducible fibers,
so is $f$ by Claim \ref{claim_isom} and this lemma is proved.
\end{proof}

Assume $r \geq 1$.
Let $R'$ be the Quot scheme parametrizing rank $s$ and degree $d$ quotient sheaves of the dual bundle $V^{\vee}_{\P^1}$ 
and let
\begin{align}\label{eq_univ_on_R'}
0 \arw \cala' \arw V^{\vee}_{\P^1 \times R'} \arw \calb' \arw 0
\end{align}
be the universal exact sequence on $\P^1 \times R'$.
Let $R'^{\circ}  \subset R'$ be the open subset
corresponding to locally free quotient sheaves of $V^{\vee}_{\P^1}$.
Taking the dual of the universal sequence (\ref{eq_univ_on R}) on $\P^1 \times R$,
we have a sequence
\begin{align}\label{eq_dual_of_univ}
0 \arw \calb^{\vee} \arw V^{\vee}_{\P^1 \times R} \arw \cala^{\vee} \arw 0, 
\end{align}
which is exact on $\P^1 \times R^{\circ}$. 
By the universal property of $R'$,
we have a morphism $R^{\circ} \arw R'$.
Similarly,
we have a morphism $R'^{\circ} \arw R$.
By these morphisms, we have an isomorphism $R^{\circ} \cong R'^{\circ}$.
Under this isomorphism,
the restriction of (\ref{eq_dual_of_univ}) on $\P^1 \times R^{\circ}$
coincides with that of (\ref{eq_univ_on_R'}) on $\P^1 \times R'^{\circ}$.

\begin{lem}\label{lem_eff_r=1}
If $r=1$,
$f$ is surjective, i.e., $ K^d_{n-1,1}= \P \big( V \otimes H^0(\calo_{\P^1}(d))^{\vee} \big)$ holds,
and $f : R \twoheadrightarrow \P \big( V \otimes H^0(\calo_{\P^1}(d))^{\vee} \big)$ is a divisorial contraction.
In particular,
$\r+ \alpha$ and $\r+ ( 2d \alpha- \beta)$ are edges of $\Mov(R)$ and $\Eff(R)$ respectively.
\end{lem}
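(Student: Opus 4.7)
The plan is to imitate the proof of Lemma \ref{lem_r=0}, exploiting the dimension equality $\dim R = nd + n - 1 = \dim \P(V \otimes H^0(\calo_{\P^1}(d))^{\vee})$ for $r = 1$, so that $f$ should be birational (rather than fiber-type) with exceptional locus equal to the non-locally-free boundary $R \setminus R^\circ$. First I describe $f$ on the locally-free locus $R^\circ$: a point $[V_{\P^1} \twoheadrightarrow \calo_{\P^1}(d)] \in R^\circ$ corresponds to a surjection $\varphi \in V^{\vee} \otimes H^0(\calo_{\P^1}(d))$ up to scalar, and using $\det \cala \cong \calo_{\P^1}(-d)$ (restricted to a fiber of $\pi$) together with $\wedge^{n-1} V \cong V^{\vee} \otimes \det V$, one verifies that $f([V_{\P^1} \twoheadrightarrow \calo_{\P^1}(d)]) = [\varphi]$ under the identification $\P(\wedge^{n-1} V \otimes H^0(\calo_{\P^1}(d))) \cong \P(V \otimes H^0(\calo_{\P^1}(d))^{\vee})$. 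Conversely, over the open subset $U \subset \P(V \otimes H^0(\calo_{\P^1}(d))^{\vee})$ of surjective $\varphi$, the assignment $[\varphi] \mapsto [V_{\P^1} \stackrel{\varphi}{\twoheadrightarrow} \calo_{\P^1}(d)]$ inverts $f|_{R^\circ}$, so $f|_{R^\circ}: R^\circ \arw U$ is an isomorphism and $f$ is birational.

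Next I show $f(R) = \P$. A non-surjective $\varphi \in \P \setminus U$ factors as $g \cdot \psi$ with $g \in H^0(\calo_{\P^1}(l))$, $l \geq 1$, and $\psi$ a surjection $V_{\P^1} \twoheadrightarrow \calo_{\P^1}(d - l)$. A parameter count shows the $\deg g = l$ stratum has codimension $(n-1)l \geq n - 1 \geq 2$ in $\P$ (using $n \geq 3$, forced by $r = 1 \leq n - 2$). Since $f(R)$ is closed and contains the dense open $U$, we conclude $f(R) = K^d_{n-1, 1} = \P(V \otimes H^0(\calo_{\P^1}(d))^{\vee})$. The complement $E := R \setminus R^\circ$ is a Cartier divisor of $R$ (by the standard stratification of Quot schemes on $\P^1$, the locus where $\calb$ has torsion of length $\geq 1$ is codimension $1$), so $f$ is a divisorial contraction with exceptional divisor $E$.

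To identify $[E]$, I compare canonical bundles. Using $K_R = -(n + (4-n)d)\alpha - (n-2)\beta$ from the $r = 1$ case of \cite[Theorem 7.1]{St}, and $K_\P = -(nd + n)\alpha$ (since $f^*\calo_\P(1) = \calo_R(\alpha)$), a direct calculation yields
\[
K_R - f^* K_\P = (n - 2)(2d\alpha - \beta).
\]
Writing $K_R = f^* K_\P + aE$ for some $a \in \Z \setminus \{0\}$ gives $[E] = \frac{n-2}{a}(2d\alpha - \beta)$; a test-curve computation (taking a fiber $C \cong \P^{n-1}$ inside $E$ obtained by varying the surjection $V_{\P^1} \twoheadrightarrow \calo_p$ for fixed torsion point $p$ and fixed map to the locally-free summand) shows $a > 0$, so $[E] \in \r+(2d\alpha - \beta)$. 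The cone statements then follow formally: since $f$ is a divisorial (i.e., non-small) contraction defined by $\alpha$, no SQM of $R$ can extend $\Mov(R)$ beyond $\r+\alpha$, so $\r+\alpha$ is an edge of $\Mov(R)$; and the irreducible effective divisor $E$ automatically spans the extremal ray $\r+(2d\alpha - \beta)$ of $\Eff(R)$.

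The main obstacle is the structural analysis of $E$ — verifying that it is an irreducible Cartier divisor of $R$ (so that the formula $K_R = f^* K_\P + aE$ applies with a single $E$), and pinning down the sign of the discrepancy $a$. Both reduce to explicit computations at the length-$1$ torsion stratum of $R$, but the bookkeeping (Fitting-ideal description of the torsion locus, and intersection numbers on the $\P^{n-1}$-fibers over that stratum) is a delicate point that has to be handled carefully.
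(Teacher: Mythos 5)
Your argument is correct in substance, but it reaches the surjectivity and divisoriality of $f$ by a genuinely different route than the paper. The paper simply quotes Str\o mme's identification $R'\cong \P\big(V\otimes H^0(\calo_{\P^1}(d))^{\vee}\big)$ for $r=1$ (\cite[Proposition 6.1 (i)]{St}) and observes that $f|_{R^{\circ}}$ coincides with the duality isomorphism $R^{\circ}\cong R'^{\circ}$; surjectivity is then immediate, and divisoriality follows from the drop of the Picard number from $2$ to $1$ (the target being $\Q$-factorial rules out a small contraction). You instead redo everything by hand in the spirit of Lemma \ref{lem_r=0}: you identify $f|_{R^{\circ}}$ with the open immersion onto the locus $U$ of surjective $\varphi$, and estimate the codimension of $\P\setminus U$. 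Both work; your version is self-contained but essentially reproves the relevant special case of Str\o mme's result, whereas the paper's choice sets up the objects ($R'$, $\alpha'$, $\beta'$) that it reuses for $2\leq r\leq n-2$. The canonical-class computation $K_R-f^*K_{\P}=(n-2)(2d\alpha-\beta)$ is identical in the two proofs. Two remarks on your write-up. First, the codimension-$\geq 2$ estimate for $\P\setminus U$ is not needed for surjectivity (closedness of $f(R)$ plus density of $U$ already gives $f(R)=\P$), but it is exactly the input that forces $f(E)\subseteq \P\setminus U$ (a fiber over a point of $U$ is connected and meets $R^{\circ}$ in a single point which is open and closed in the fiber, hence equals the whole fiber), so that $E$ is contracted and $f$ is divisorial rather than small; you should make that logical role explicit rather than attaching the estimate to the surjectivity step. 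Second, the test-curve computation for the sign of the discrepancy $a$, which you flag as the delicate point, is unnecessary: the target $\P$ is smooth, so every $f$-exceptional divisor has discrepancy $a\geq 1$, and $[E]$ is automatically a positive multiple of $2d\alpha-\beta$ --- this is how the paper (implicitly) concludes, and it removes the main "obstacle" you describe.
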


\begin{proof}
When $r=1$,
$R' = \P \big(V \otimes H^0(\calo_{\P^1}(d))^{\vee} \big) $ by \cite[Proposition 6.1 (i)]{St}.
By construction,
the isomorphism $R^{\circ} \cong R'^{\circ}$
is nothing but the restriction of $f : R \arw \P \big(V \otimes H^0(\calo_{\P^1}(d))^{\vee} \big) = R'$ on $R^{\circ}$ in this case.
Thus $f$ is surjective.
Since the Picard numbers of $R$ and $R'$ are two and one respectively,
$f$ is a divisorial contraction.
Hence $\r+ \alpha$ is an edge of $\Mov(R)$.

To compute the class of the contracted divisor,
we compare the canonical divisors on $R$ and $R'$.
Since $r=1$,
\[
K_R=- (n+(2r+2-n)d)\alpha  -(n-2r) \beta = - (n+ (4-n)d) \alpha -(n-2) \beta.
\]
On the other hand,
$K_{R'} = \calo_{R'}(- \dim R' -1) = \calo_{R'}(-nd-n)$
and $f^* \calo_{R'}(1) = \alpha$.
Hence we have
\[
K_R -f^* K_{R'} = (n-2) ( 2d \alpha - \beta).
\]
Since $n-2 \geq r =1$,
the class of the contracted divisor is a positive multiple of $2d \alpha -\beta$
and this class spans an edge of $\Eff(R)$.
\end{proof}

\begin{lem}\label{lem_eff_r_geq2}
If $2 \leq r \leq n-2$,
$R'$ is an SQM of $R$.
Furthermore,
$\Nef(R')= \r+ \alpha +  \r+ ( 2d \alpha -\beta)$
and $\alpha,  2d \alpha - \beta$ are base point free on $R'$.
\end{lem}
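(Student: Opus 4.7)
The plan is as follows. First, apply the results of \cite{St} to $R'$ itself: since $R'$ is the Quot scheme of rank-$s$ degree-$d$ quotients of the trivial bundle $V^{\vee}_{\P^1}$ with $2 \leq s \leq n-2$ and $d \geq 1$, $R'$ is a smooth projective variety of Picard rank two, and $\Nef(R') = \r+\alpha' + \r+\beta'$ is spanned by the base point free classes $\alpha' = c_1(\mathscr{B}'_d) - c_1(\mathscr{B}'_{d-1})$ and $\beta' = c_1(\mathscr{B}'_{d-1})$, where $\mathscr{B}'_m = \pi_*(\calb'(m))$ for the universal quotient $\calb'$ on $\P^1 \times R'$. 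Smoothness yields normality and $\Q$-factoriality, so the remaining tasks are to show that the birational map $R \dashrightarrow R'$ extending $R^\circ \cong R'^\circ$ is an isomorphism in codimension one, and to identify $\alpha',\beta'$ as classes in $N^1(R)_\R$.

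For the first task, stratify $R \setminus R^\circ$ by the length $\ell \geq 1$ of the torsion part of the universal quotient $\calb$. A point in the stratum $S_\ell$ is specified by the saturation $\tilde{\cala} \subset V_{\P^1}$ of $\cala$, a rank-$s$ subbundle of degree $-d+\ell$ (equivalently a point in the locally free locus of $\Quot^{(r,d-\ell)}$, of dimension $n(d-\ell)+rs$), together with a length-$\ell$ torsion quotient $\tilde{\cala} \twoheadrightarrow T$ (parametrized by a Quot scheme of dimension $s\ell$). Summing yields $\dim S_\ell = \dim R - r\ell$, so $R \setminus R^\circ$ has codimension $\geq r \geq 2$ in $R$; the symmetric count (with $r$ and $s$ interchanged) gives codimension $\geq s \geq 2$ for $R' \setminus R'^\circ$. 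Hence $R \dashrightarrow R'$ is an isomorphism in codimension one, and $R'$ is an SQM of $R$.

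For the second task, compute on $R^\circ$. The identification $\cala' = \calb^\vee$ combined with $\det \cala \otimes \det \calb = \calo$ from (\ref{eq_univ_on R}) gives $\det \cala' = \det \cala$, so $\alpha' = \alpha$. For $\beta'$, pushing forward the dual sequence (\ref{eq_dual_of_univ}) twisted with $\calo_{\P^1}(d-1)$ (using $R^1\pi_*V^\vee_{\P^1 \times R^\circ}(d-1) = 0$) yields $c_1(\pi_*\cala^\vee(d-1)) = -c_1(\pi_*\calb^\vee(d-1))$. Relative Serre duality identifies $\pi_*\calb^\vee(d-1) \cong (R^1\pi_*\calb(-d-1))^\vee$, and since $\pi_*\calb(-d-1) = 0$ on $R^\circ$ by negative fiber degree, this gives $c_1(\pi_*\calb^\vee(d-1)) = c_1(R\pi_*\calb(-d-1))$. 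A Grothendieck-Riemann-Roch computation (equivalently, extending Str\o mme's recursion $0 \to \mathscr{B}_{m-1} \to \mathscr{B}_m^{\oplus 2} \to \mathscr{B}_{m+1} \to 0$ to a K-theoretic identity for all $m \in \Z$) gives $c_1(R\pi_*\calb(m)) = -(d-1-m)\alpha + \beta$, which at $m=-d-1$ specializes to $-2d\alpha + \beta$. Combining gives $\beta' = 2d\alpha - \beta$, hence $\Nef(R') = \r+\alpha + \r+(2d\alpha - \beta)$. I expect the stratification dimension count for $R \setminus R^\circ$ to be the main technical step; the Serre-duality/GRR identification of $\beta'$ then reduces to standard intersection-theoretic bookkeeping.
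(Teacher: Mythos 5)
Your proposal is correct, and it reaches the same three checkpoints as the paper (codimension of the non-locally-free loci, $\alpha'=\alpha$, $\beta'=2d\alpha-\beta$) by genuinely different means at each one. For the SQM claim the paper simply cites Shao's result that $\codim(R\setminus R^{\circ})=r$ and $\codim(R'\setminus R'^{\circ})=s$, whereas you reprove it via the torsion-length stratification $\dim S_\ell=\dim R-r\ell$; your count is right and makes the argument self-contained. For $\alpha'=\alpha$ the paper uses the commutative diagram relating $f:R\arw\P\big(\bigwedge^r V\otimes H^0(\calo_{\P^1}(d))^{\vee}\big)$ and $f':R'\arw\P\big(\bigwedge^s V^{\vee}\otimes H^0(\calo_{\P^1}(d))^{\vee}\big)$, while you read it off from $\det\cala'=\det\calb^{\vee}=\det\cala$; these are essentially the same fact, yours stated more directly. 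For $\beta'$ the paper invokes Mart\'inez's Chern class identities ($\beta=\pi_*(c_2(\cala))$, $c_2(\calb)=t_1^2-t_2-h(2dt_1+u_1)$), whereas you use relative Serre duality together with the observation that the recursion $0\arw\mathscr{B}_{m-1}\arw\mathscr{B}_m^{\oplus2}\arw\mathscr{B}_{m+1}\arw0$ forces $c_1(R\pi_*\calb(m))=-(d-1-m)\alpha+\beta$ for all $m\in\Z$, then evaluate at $m=-d-1$; this is a clean alternative that avoids the citation to \cite{Ma}, at the cost of a duality argument. One small correction: the exactness of $0\arw\pi_*\calb^{\vee}(d-1)\arw V^{\vee}\otimes H^0(\calo_{\P^1}(d-1))\arw\pi_*\cala^{\vee}(d-1)\arw0$ on $R^{\circ}$ requires $R^1\pi_*\big(\calb^{\vee}(d-1)\big)=0$, not merely $R^1\pi_*\big(V^{\vee}_{\P^1\times R^{\circ}}(d-1)\big)=0$ as you wrote; the needed vanishing does hold, because $\calb|_{\P^1\times\{z\}}$ is a globally generated rank-$r$ bundle of degree $d$, so its summands $\calo(b_i)$ satisfy $0\leq b_i\leq d$ and hence $\calb^{\vee}(d-1)|_{\P^1\times\{z\}}$ has no summand of degree below $-1$. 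With that justification supplied, your argument is complete.
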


\begin{proof}
By \cite[Corollary 4.9]{Sh},
the codimension of $R \setminus R^{\circ} $ in $R$ is $r$,
and that of $R' \setminus R'^{\circ}  $ in $R'$ is $s$.
Hence $R'$ is an SQM of $R$ by $2 \leq r \leq n-2$.
Similar to $R$,
$\Nef(R')$ is spanned by base point free line bundles $\alpha':= c_1(B'_{d}) -c_1(B'_{d-1})$ and $\beta':= c_1(B'_{d-1})$ for  $ B'_m=\pi_* \bigl(\calb'(m)\bigr)$.
Hence it suffices to show that $\alpha'=\alpha$ and $\beta' =  2d \alpha -\beta$
under the identification of $N^1(R)_{\R} $ and $N^1(R')_{\R}$.

Similar to $f$,
there exists a morphism $f' : R' \arw \P \big(\bigwedge^s V^{\vee} \otimes H^0(\calo_{\P^1}(d))^{\vee} \big)$.
By the canonical isomorphism
$ \P \big(\bigwedge^r V \otimes H^0(\calo_{\P^1}(d))^{\vee} \big) 
\stackrel{\sim}{\arw} \P \big(\bigwedge^s V^{\vee} \otimes H^0(\calo_{\P^1}(d))^{\vee} \big)$
and the definitions of $f,f'$,
we have a commutative diagram
\begin{equation}\label{diagram_f_f'}
\begin{gathered}
\xymatrix{
\ar@{}[rd]|{\circlearrowright}
R  \ar[d]_f \ar@{-->}[r]  &  R' \ar[d]^{f'} \\
\P \big(\bigwedge^r V \otimes H^0(\calo_{\P^1}(d))^{\vee} \big) \ar[r]^(.47){\sim} & \P \big(\bigwedge^s V^{\vee} \otimes H^0(\calo_{\P^1}(d))^{\vee} \big).\\
 }
\end{gathered}
\end{equation}
Hence we have $\alpha=f^* \calo(1) = f'^* \calo(1) = \alpha'$.

Let $h= c_1({p_1}^* \calo_{\P^1}(1))$ for $p_1 : \P^1 \times R \arw \P^1$.
As in \cite{St} and \cite{Ma},
we can write the Chern classes $c_i(\cala) = t_i + h u_{i-1}$ for $i=1,2$, $t_i \in A^{i}(R), u_{i-1} \in A^{i-1}(R)$.
By \cite[Lemma 3.1]{Ma},
$t_1= -\alpha, u_1=\beta$.
In particular,
$\beta = {\pi_R }_* (c_2(\cala))$ holds.
Similarly,
we have $\beta'= {\pi_{R'} }_* (c_2(\cala'))$.
Under the natural identifications $A^1(R') = A^1(R'^{\circ} ) =  A^1({R}^{\circ} ) =A^1(R)$,
we have
\begin{align*}
\beta' = {\pi_{R'}}_* (c_2(\cala ')) &= {\pi_{{R'}^{\circ}}}_* (c_2(\cala ' |_{\P^1 \times {R'}^{\circ}}))\\
&=  {\pi_{{R}^{\circ}}}_* (c_2(\calb^{\vee} |_{\P^1 \times {R}^{\circ}}) )= {\pi_{{R}^{\circ}}}_* (c_2(\calb |_{\P^1 \times {R}^{\circ}}))
= {\pi_{R}}_* (c_2(\calb ))
\end{align*}
As written in the proof of  \cite[Lemma 3.1]{Ma},
it holds that $c_2(\calb)= t_1^2 - t_2 -h (2dt_1+u_1)$.
Hence we have $\beta' = {\pi_{R}}_* (c_2(\calb )) = -(2dt_1+u_1) = 2d \alpha - \beta$.
\end{proof}

\begin{rem}\label{rem_D&Y}
To describe $\Eff(R)$,
Jow \cite{Jo} used another basis $Y,D$
 of $\Pic(R)$,
which was introduced by Mart\'inez in \cite{Ma}. 
The classes $Y$ and $D$ are defined by
\[
Y = {\pi_R}_* (h \cdot c_1(\calb)) , \quad D = {\pi_R}_* (c_2(\calb)).
\]
By Lemma 3.2 in \cite{Ma} and Introduction in \cite{Jo},
$Y=\alpha$ and $D= 2d \alpha -\beta$ holds.
Hence $Y$ and $D$ are nothing but $\alpha'$ and $\beta'$ respectively if $2 \leq r \leq n-2$.
\end{rem}

As in Remark \ref{rem_small_contraction},
$\pr_1,\pr_2$ in the diagram (\ref{diagram_small_contractions}) are small contractions for $\lfloor d/s \rfloor +1 \leq m \leq d-1$.
In the following lemma,
we see that $f,f'$ in the diagram (\ref{diagram_f_f'}) are small contractions.
We note that $f'(R') =f(R) =K^d_{s,r}$ holds by the diagram (\ref{diagram_f_f'}) for $2 \leq r \leq n-2$
under the identification $ \P \big(\bigwedge^r V \otimes H^0(\calo_{\P^1}(d))^{\vee} \big) 
= \P \big(\bigwedge^s V^{\vee} \otimes H^0(\calo_{\P^1}(d))^{\vee} \big)$.

\begin{lem}\label{lem_quantum_Gr}
If $2 \leq r \leq n-2$,
$f : R \twoheadrightarrow K^d_{s,r}$ and $f' : R' \twoheadrightarrow K^d_{s,r}$ are small contraction.
\end{lem}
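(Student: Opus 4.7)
The plan is to prove $f$ is a small contraction by exhibiting a dense open subset $R^\circ \subseteq R$ (the locus of locally free quotients) over which $f$ restricts to an isomorphism onto an open of $K^d_{s,r}$; the exceptional locus of $f$ is then confined to $R \setminus R^\circ$, which has codimension $r \geq 2$ by \cite[Corollary 4.9]{Sh} (as cited in the proof of Lemma \ref{lem_eff_r_geq2}). The argument for $f'$ is entirely symmetric, replacing $r$ with $s \geq 2$ and using the identification of diagram \eqref{diagram_f_f'}.

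First I would unpack $f$ geometrically. For any $z \in R$, the kernel $\cala_z := \ker(V_{\P^1} \twoheadrightarrow B_z)$ is locally free of rank $s$ and degree $-d$, as a subsheaf of a locally free sheaf on the smooth curve $\P^1$. By construction, $f(z)$ records the line sub-sheaf $\det \cala_z \hookrightarrow \bigwedge^s V_{\P^1}$ (equivalently, the section $\tau_z \in H^0(\bigwedge^s V \otimes \calo_{\P^1}(d))$ obtained after the twist $\det \cala_z \otimes \calo_{\P^1}(d) \cong \calo_{\P^1}$, using $\deg \det \cala_z = -d$). A local computation shows $\tau_z$ vanishes exactly where $\cala_z$ fails to be a sub-bundle of $V_{\P^1}$, that is, on the torsion locus of $B_z$; in particular $\tau_z$ is nowhere vanishing if and only if $z \in R^\circ$.

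Next I would prove: if $z \in R^\circ$ and $z' \in R$ satisfy $f(z') = f(z)$, then $z' = z$. From $f(z') = f(z)$ one gets $\tau_{z'} = \tau_z$ up to scalar, which is nowhere vanishing, forcing $z' \in R^\circ$ and yielding $\det \cala_z = \det \cala_{z'}$ as sub-line-bundles of $\bigwedge^s V_{\P^1}$; the fiberwise Pl\"ucker embedding $\Gr(s,V) \hookrightarrow \P(\bigwedge^s V)$ then forces $\cala_z |_p = \cala_{z'}|_p$ for every $p \in \P^1$, hence $\cala_z = \cala_{z'}$ and $z = z'$. Set-theoretically this gives $f^{-1}(f(R^\circ)) = R^\circ$; since $f$ is proper, $f(R^\circ) = K^d_{s,r} \setminus f(R \setminus R^\circ)$ is open, and base change yields a proper, bijective, hence finite morphism $R^\circ \to f(R^\circ)$. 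A finite birational surjection onto a normal variety --- an open of the normal $K^d_{s,r}$ (Lemma \ref{lem_normal&CM}) --- is an isomorphism. Thus $f$ is birational with exceptional locus contained in $R \setminus R^\circ$, so $f$ is small; and $f_* \calo_R = \calo_{K^d_{s,r}}$ by Zariski's main theorem for the proper birational $f$ with normal target. The identical argument for $f'$ uses codimension $s \geq 2$.

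The main conceptual obstacle is that $f(z)$ records only the line sub-sheaf $\det \cala_z \subset \bigwedge^s V_{\P^1}$, not $\cala_z$ itself: for non-saturated $\cala_z$ (equivalently $z \notin R^\circ$) there are many distinct subsheaves sharing a given determinant, so injectivity of $f$ genuinely fails on $R \setminus R^\circ$. The argument therefore crucially uses that $\tau_z$ is nowhere vanishing on $R^\circ$ to first push the hypothetical preimage $z'$ back into $R^\circ$, after which saturation makes the Pl\"ucker recovery of $\cala_z$ from $\det \cala_z$ unambiguous.
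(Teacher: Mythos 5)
Your overall strategy is the same as the paper's: reduce to showing that $f$ restricted to $R^{\circ}$ is an isomorphism onto an open subset of the normal variety $K^d_{s,r}$, using that $\codim(R\setminus R^{\circ})=r\geq 2$ by \cite[Corollary 4.9]{Sh}, and recover $\cala_z$ from $\det\cala_z$ via the Pl\"ucker embedding. The computation of where $\tau_z$ vanishes, the deduction that any $z'$ with $f(z')=f(z)$ and $z\in R^{\circ}$ must itself lie in $R^{\circ}$, and the consequent openness of $f(R^{\circ})$ are all correct and match the paper in substance.

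However, there is a genuine gap at the final step. Your Pl\"ucker argument is carried out only on closed points, so what you actually establish is that $R^{\circ}\to f(R^{\circ})$ is a proper \emph{bijective} morphism, hence finite. You then invoke ``a finite \emph{birational} surjection onto a normal variety is an isomorphism,'' but birationality was never proved: over a field of positive characteristic (which this paper explicitly allows), a finite bijective morphism onto a normal variety need not be birational, let alone an isomorphism (the Frobenius of $\A^1$ is the standard example). Without separability or injectivity on tangent vectors you cannot conclude that $f|_{R^{\circ}}$ is an isomorphism, nor even that $f$ is birational, which is part of what ``small contraction'' requires. The repair is to upgrade your pointwise argument to a scheme-theoretic one: either run the same determinant/Pl\"ucker recovery on $T$-valued points (the Pl\"ucker embedding $\Gr(s,V)\hookrightarrow\P(\bigwedge^s V)$ is a closed immersion, so a family of $s$-planes is determined functorially by its family of determinant lines), or, as the paper does, factor $f|_{R^{\circ}}$ as $R^{\circ}=\Mor_d(\P^1,\Gr(V,r))\hookrightarrow \Mor_d\big(\P^1,\P(\bigwedge^r V)\big)\subset \bar R$ followed by $\bar f$, and apply the rank-one case (the argument of Lemma \ref{lem_eff_r=1}) to see that $\bar f$ restricted to the morphism space is an open immersion; this yields that $f|_{R^{\circ}}$ is an embedding, from which birationality, $f_*\calo_R=\calo_{K^d_{s,r}}$, and smallness all follow as you intend.
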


\begin{proof}
It suffices to show that $f : R \twoheadrightarrow K^d_{s,r}$ is a small contraction.
By \cite[Corollary 4.9]{Sh},
the codimension of $R \setminus R^{\circ}  $ in $R$ is $r \geq 2$.
Since $K^d_{s,r}$ is normal by Lemma \ref{lem_normal&CM},
it is enough to show that $f$ is an embedding on $R^{\circ}$.

Since $\Gr(V,r)$ is embedded into $ \P(\bigwedge^r V)$ by the Pl\"{u}cker embedding,
$R^{\circ}=\Mor_d(\P^1, \Gr(V,r)) $ is embedded into $\Mor_d \big(\P^1,  \P(\bigwedge^r V) \big) $.
Let $\bar{R} $ be the Quot scheme parametrizing all rank $1$, degree $d$ quotient sheaves of $(\bigwedge^r V)_{\P^1}$,
which is a compactification of $\Mor_d \big(\P^1,  \P(\bigwedge^r V) \big) $ since $\P(\bigwedge^r V) =\Gr(\bigwedge^r V,1)$.
Applying the definition of $f$ to $\bar{R}$,
we have a morphism $\bar{f} : \bar{R} \arw  \P \big( \bigwedge^r V \otimes H^0(\calo_{\P^1}(d))^{\vee} \big)$.
By the constructions of $f, \bar{f}$,
the diagram
\[
\xymatrix{
R^{\circ}=\Mor_d(\P^1, \Gr(V,r))  \ar[rd]_{f |_{R^{\circ}}} \ar@{^(->}[r]  &  \Mor_d \big(\P^1,  \P(\bigwedge^r V) \big) \subset \bar{R} \ar[d]^{\bar{f} |_{\Mor_d (\P^1,  \P(\bigwedge^r V))}}\\
& \P \big(\bigwedge^r V \otimes H^0(\calo_{\P^1}(d))^{\vee} \big) .\\
 }
\]
is commutative.
Applying the proof of Lemma \ref{lem_eff_r=1} to $\bar{R}$,
the restriction $\bar{f} |_{\Mor_d (\P^1,  \P(\bigwedge^r V))}$ is an open immersion.
Hence $f |_{R^{\circ}}$ is an embedding and this lemma holds.
\end{proof}

\begin{lem}\label{lem_dual}
Assume $2 \leq r \leq n-2$.
For $\lfloor d/r \rfloor +1 \leq  m' \leq d-1$,
there exists a smooth SQM $R'_{m'}$ of $R$
such that
\[
\Nef(R'_{m'})= \r+ ((d+m')\alpha - \beta) + \r+ ((d+m'+1)\alpha - \beta)
\]
and $(d+m')\alpha- \beta $ and $(d+m'+1)\alpha-\beta $ are base point free on $R'_{m'} $.
Furthermore,
$\r+ ((d+ \lfloor d/r \rfloor +1) \alpha) -\beta )$ and $\r+ ((d+ \lceil d/r \rceil ) \alpha -\beta ) $
are edges of $\Mov(R)$ and $\Eff(R)$ respectively.
\end{lem}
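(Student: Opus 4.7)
The plan is to apply the results already proved for $R$ to the Quot scheme $R'$ in place of $R$, using the identification $N^1(R)_{\R}=N^1(R')_{\R}$ coming from Lemma \ref{lem_eff_r_geq2}.

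First, since $R'$ parametrizes rank $s$, degree $d$ quotient sheaves of $V^{\vee}_{\P^1}$, Theorem \ref{thm_moduli} applied to $R'$ (with the roles of $r$ and $s$ exchanged) produces a smooth projective fine moduli space $R'_{m'}$ and a birational map $\tilde{g}'_{m'}:R'\dashrightarrow R'_{m'}$ for every $m'\geq\lceil d/r\rceil$. Next, applying Lemma \ref{lem_SQM} to $R'$ shows that for $\lfloor d/r\rfloor+1\leq m'\leq d-1$ the map $\tilde{g}'_{m'}:R'\dashrightarrow R'_{m'}$ is an SQM of $R'$, and that
\[
\Nef(R'_{m'})=\r+ c_1(\mathscr{B}'_{m'-1})+\r+ c_1(\mathscr{B}'_{m'}),
\]
where $\mathscr{B}'_m=\pi_*(\calb'(m))$ for the universal quotient $\calb'$ on $\P^1\times R'$, and both generators are base point free on $R'_{m'}$. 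Since $R'$ itself is an SQM of $R$ by Lemma \ref{lem_eff_r_geq2}, composing $R\dashrightarrow R'\dashrightarrow R'_{m'}$ shows that $R'_{m'}$ is an SQM of $R$ as well.

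The second step is to translate the generators $c_1(\mathscr{B}'_m)$ into the basis $\alpha,\beta$ of $N^1(R)_{\R}$. By the computation in the proof of Lemma \ref{lem_eff_r_geq2}, under the identification $N^1(R')_{\R}=N^1(R)_{\R}$ we have $\alpha'=\alpha$ and $\beta'=2d\alpha-\beta$, where $\alpha'=c_1(\mathscr{B}'_d)-c_1(\mathscr{B}'_{d-1})$ and $\beta'=c_1(\mathscr{B}'_{d-1})$. Using $c_1(\mathscr{B}'_m)=-(d-1-m)\alpha'+\beta'$ (the same identity recalled at the start of this section, proved for any Quot scheme of the given type), we compute
\[
c_1(\mathscr{B}'_m)=-(d-1-m)\alpha+(2d\alpha-\beta)=(d+m+1)\alpha-\beta.
\]
Substituting $m=m'-1$ and $m=m'$ gives the claimed description of $\Nef(R'_{m'})$.

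Finally, for the edges of $\Mov(R)$ and $\Eff(R)$ coming from $R'$, apply Lemma \ref{lem_edge_Mov_Eff} to $R'$. The morphism $\pr_1:R'_{\lfloor d/r\rfloor+1}\arw G_{\lfloor d/r\rfloor}$ is a fiber type (resp.\ divisorial) contraction according as $d/r\notin\N$ (resp.\ $d/r\in\N$), and its pull-back class $c_1(\mathscr{B}'_{\lfloor d/r\rfloor})=(d+\lfloor d/r\rfloor+1)\alpha-\beta$ spans an edge of $\Mov(R')=\Mov(R)$. Similarly, the extremal divisor class identified in Lemma \ref{lem_edge_Mov_Eff} applied to $R'$ is a positive multiple of $c_1(\mathscr{B}'_{\lceil d/r\rceil-1})=(d+\lceil d/r\rceil)\alpha-\beta$, which spans an edge of $\Eff(R')=\Eff(R)$. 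The main obstacle is simply keeping the $r\leftrightarrow s$ swap consistent and verifying that the identification of Picard groups from Lemma \ref{lem_eff_r_geq2} correctly converts $\alpha',\beta'$ into expressions in $\alpha,\beta$; once that is in hand, everything reduces to invoking the previously established lemmas for $R$ applied to $R'$.
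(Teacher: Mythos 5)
Your proposal is correct and follows essentially the same route as the paper: apply Lemma \ref{lem_SQM} (together with Lemma \ref{lem_eff_r_geq2} to see that $R'$, hence $R'_{m'}$, is an SQM of $R$) and Lemma \ref{lem_edge_Mov_Eff} to $R'$, then convert via $\alpha'=\alpha$ and $\beta'=2d\alpha-\beta$. The only difference is that you spell out the computation $c_1(\mathscr{B}'_m)=(d+m+1)\alpha-\beta$ explicitly, which the paper leaves implicit.
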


\begin{proof}
Applying Proposition \ref{lem_SQM} to $R'$,
we obtain SQMs $R'_{m'}$ of $R'$ such that
\[
\Nef(R'_{m'}) =\r+  (-(d-1 -m')\alpha' +\beta') + \r+ ( - (d-m')\alpha' +\beta').
\]
Since $\alpha'=\alpha$ and $\beta'= 2d \alpha-\beta$ by the proof of Lemma \ref{lem_eff_r_geq2},
we have the first assertion.
The rest part follows by applying Lemma \ref{lem_edge_Mov_Eff} to $R'$.
\end{proof}

\begin{proof}[Proof of Theorem \ref{main thm1}]
(1) follows from Lemma \ref{lem_SQM}.
(2) follows from Lemmas \ref{lem_edge_Mov_Eff}, \ref{lem_r=0}, and \ref{lem_eff_r=1}.
(3) follows from Lemmas \ref{lem_r=0}, and \ref{lem_eff_r=1}.
(4) follows from Lemma \ref{lem_edge_Mov_Eff}.
(5) follows from Remark \ref{rem_small_contraction}.
\end{proof}

\begin{proof}[Proof of Theorem \ref{main thm2}]
(1) follows from Lemmas \ref{lem_SQM},  \ref{lem_eff_r_geq2}, \ref{lem_dual}.
(2) follows from Lemmas \ref{lem_edge_Mov_Eff}, \ref{lem_dual}.
(3) follows from Lemma \ref{lem_edge_Mov_Eff}.
(4) follows by applying Lemma \ref{lem_edge_Mov_Eff} to $R'$.
(5) follows from Remark \ref{rem_small_contraction}.
(6) follows by applying Remark \ref{rem_small_contraction} to $R'$.
(7) follows from Lemma \ref{lem_quantum_Gr}.
\end{proof}

\begin{proof}[Proof of Corollary \ref{cor_MDS}]\label{proof_log_fano}
We may assume that $0 \leq r \leq n-2$ and $d \geq 1$.

By results of Str\o mme,
$R$ satisfies conditions 1) and 2) in the definition of Mori dream spaces.
By Theorems \ref{main thm1}, \ref{main thm2},
$R$ satisfies condition 3).
Hence $R$ is a Mori dream space.

By Theorems \ref{main thm1}, \ref{main thm2} and the description of $K_R$ by $\alpha,\beta$,
we can check that $-K_R \in \Mov(R) \cap \Eff(R)^{\circ}$,
where $\Eff(R)^{\circ} $ is the interior of $\Eff(R)$.
Hence there exists an SQM $R^{\dagger}$ of $R$ such that whose anti-canonical divisor is nef and big.
Since $R^{\dagger}$ is smooth,
$R^{\dagger}$ is log Fano.
Hence $R$ is log Fano by \cite[Lemma 2.4]{Bi}. 
\end{proof}

\end{document}